\numberwithin{equation}{section}
\def\Tiny{\fontsize{4pt}{4pt}\selectfont}
\newcommand{\bC}{{\mathbb C}}
\newcommand{\bF}{{\mathbb F}}
\newcommand{\bK}{{\mathbb K}}
\newcommand{\bP}{{\mathbb P}}
\newcommand{\bQ}{{\mathbb Q}}
\newcommand{\bR}{{\mathbb R}}
\newcommand{\bZ}{{\mathbb Z}}
\newcommand{\scrA}{\EuScript A}
\newcommand{\scrB}{\EuScript B}
\newcommand{\scrC}{\EuScript C}
\newcommand{\scrD}{\EuScript D}
\newcommand{\scrM}{\EuScript M}
\newcommand{\scrN}{\EuScript N}
\newcommand{\scrW}{\EuScript W}
\renewcommand{\hom}{\operatorname{\mathit{hom}}}
\renewcommand{\mod}{\operatorname{\mathit{mod}}}
\newcommand{\modt}{\operatorname{\mathit{modt}}}
\newcommand{\Fuk}{\mathfrak{F}}
\newcommand{\Ch}{\operatorname{\mathit{Ch}}}
\newcommand{\HF}{\operatorname{\mathit{HF}}}
\newcommand{\HW}{\operatorname{\mathit{HW}}}
\newcommand{\SH}{\operatorname{\mathit{SH}}}
\newcommand{\tw}{\operatorname{\mathit{tw}}}
\newcommand{\Diff}{\mathrm{Diff}}
\newcommand{\iso}{\cong}
\newcommand{\htp}{\simeq}
\newcommand{\complexity}{\text{\selectlanguage{russian}\CYREREV}}
\newcommand{\scrMan}{\scrM^{\raisebox{-.2ex}[0ex][0ex]{\textrm{{\Tiny\anchor}}}}}
\newtheorem{thm}{Theorem}[section]
\newtheorem{theorem}[thm]{Theorem}
\newtheorem{property}[thm]{Property}
\newtheorem{corollary}[thm]{Corollary}
\newtheorem{assumption}[thm]{Assumption}
\newtheorem{remark}[thm]{Remark}
\newtheorem{example}[thm]{Example}
\newtheorem{lemma}[thm]{Lemma}
\begin{document}
\title[Homologous recombination]{Altering symplectic manifolds\\ by homologous recombination}
\author[Abouzaid, Seidel]{Mohammed Abouzaid, Paul Seidel}
\date{August 2, 2010}
\maketitle

\section{Introduction}

\subsection{A quick review}
This paper studies the non-uniqueness of symplectic structures on open manifolds of high dimension. More precisely, we only consider symplectic manifolds which are Liouville, and for the most part, ones which are of finite type (a brief summary of the definitions is given in Section \ref{subsec:def}). Non-specialist readers may want to keep in mind that every smooth complex affine variety can be turned into a finite type Liouville manifold by choosing a suitable K{\"a}hler form. More generally, every Stein manifold can be made Liouville, but the outcome may not always be of finite type. It almost goes without saying that the basic relation between Stein and symplectic geometry, as well as many of the fundamental ideas in this field, were pioneered by Eliashberg and Gromov \cite{eliashberg90, eliashberg-gromov93, eliashberg94}.

Let us turn to the specific non-uniqueness question, beginning with the case of flat space (we omit the earlier history of exotic symplectic structures on flat space, since those are not known to be Liouville). Seidel-Smith \cite{seidel-smith04b} exhibited a finite type Liouville structure on $\bR^{2n}$, for any even $n \geq 4$, which is not symplectomorphic to the standard one. Examples for any $n \geq 4$ were constructed by McLean \cite{mclean09}. In fact, the last-mentioned paper proves the much stronger statement that for any $n \geq 4$, there is an infinite sequence of finite type Liouville structures on $\bR^{2n}$ which are pairwise non-symplectomorphic.

Among other open manifolds, cotangent bundles have received the most attention. The best available result is again due to McLean \cite{mclean09}. It says that on the cotangent bundle of any closed manifold of dimension $n \geq 4$, there is an infinite sequence of finite type Liouville structures which are pairwise non-symplectomorphic. More recently, Maydanskiy-Seidel \cite{maydanskiy-seidel09} (based on earlier work of Maydanskiy \cite{maydanskiy09}) constructed another nonstandard Liouville structure on the cotangent bundle of the $n$-sphere for any $n \geq 3$, which is different from McLean's because it does not contain any Lagrangian sphere. Yet another construction for $T^*S^n$, which is at least philosophically close to the last-mentioned one but maybe more explicit, is given in Bourgeois-Ekholm-Eliashberg \cite{bourgeois-ekholm-eliashberg09}. In all these cases, the nonstandard structures are almost symplectomorphic to the standard ones, which means that they are undistinguishable in ``soft'' or homotopy-theoretic terms (this is a traditional concept, recalled in Section \ref{subsec:almost}).

\subsection{Finite type Liouville manifolds\label{subsec:results}}
Following a well-established direction, the main invariant we will use is symplectic cohomology, in the sense of \cite{viterbo97a} (an earlier related construction is \cite{cieliebak-floer-hofer95}). This associates to each Liouville manifold a $\bZ/2$-graded $\bK$-vector space, where $\bK$ is some arbitrary coefficient field (the precise sense in which this is an invariant is discussed in Section \ref{subsec:def-sh}).

\begin{theorem} \label{th:1}
Let $X$ be a smooth complex affine variety of real dimension $\geq 6$. Then there is a finite type Liouville manifold $\tilde{X}$ which is almost symplectomorphic to $X$, and such that the symplectic cohomology of $\tilde{X}$ vanishes (for all $\bK$).
\end{theorem}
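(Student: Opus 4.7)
Following the suggestion of the title, I would build $\tilde X$ from $X$ by ``homologously recombining'' Lagrangian data: insert pairs of Lagrangian spheres that are mutually homologous and therefore cancel topologically, chosen so that the resulting Liouville manifold has vanishing symplectic cohomology while remaining almost symplectomorphic to $X$.

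The concrete plan is first to realize $X$, up to deformation through finite type Liouville manifolds, as the total space of a symplectic Lefschetz fibration $\pi : X \to \bC$ with Liouville fiber $M$; for a smooth affine $X$ of complex dimension at least $3$ this is available via a generic algebraic pencil followed by a standard completion to finite type. Write $(L_1,\ldots,L_k) \subset M$ for the associated ordered collection of vanishing cycles. I would then construct $\tilde X$ as the total space of a new Lefschetz fibration over $\bC$ with the same fiber $M$ and with vanishing cycles obtained by augmenting $(L_1,\ldots,L_k)$ with a collection of inserted pairs $(L'_i,L''_i)$ of Lagrangian spheres in $M$ that are Hamiltonian isotopic, and so in particular homologous. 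Because each such pair corresponds to a pair of cancelling Lefschetz critical points, the smooth type of the total space is unchanged; with a careful choice of framings so is the stable almost complex structure, which yields the almost symplectomorphism part of the statement.

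The vanishing of $\SH^*(\tilde X)$ would then be deduced from a Fukaya-categorical criterion: $\SH^*$ of a Lefschetz fibration is closely tied to the Hochschild invariants of the directed Fukaya category of its vanishing thimbles, and inserting a carefully chosen pair of homologous Lagrangians introduces an exact triangle which forces the image of the unit class under $H^*(\tilde X) \to \SH^*(\tilde X)$ to vanish. Since $\SH^*$ is a unital $\bK$-algebra, death of the unit forces the whole ring to vanish, for any coefficient field $\bK$.

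The main obstacle, as I see it, is arranging the inserted pairs $(L'_i,L''_i)$ so as to simultaneously preserve the topological and almost symplectic data (they must cancel homologically and have compatible Maslov data) while killing the unit in $\SH^*$. This balancing act is where the dimension hypothesis and the need to possibly stabilize the fiber $M$ enter: one must have enough room in $M$ to find homologous pairs of Lagrangian spheres exhibiting the prescribed Floer-theoretic behavior, and checking that the resulting Liouville structure is genuinely almost symplectomorphic to the original (not merely diffeomorphic) requires verifying the vanishing of the relevant obstructions in the first Chern class and in the homotopy class of the almost complex structure.
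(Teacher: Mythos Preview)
Your proposal has the right broad strategy---represent $X$ as a Lefschetz fibration and alter the vanishing cycles---but the specific mechanism fails. If the inserted pairs $(L'_i, L''_i)$ are Hamiltonian isotopic in the \emph{same} fibre $M$, they represent isomorphic objects in $\Fuk(M)$, so by Property~\ref{th:meta} and Property~\ref{th:open-closed} the vanishing or nonvanishing of $\SH^*$ of the total space is governed by $A_\infty$-data that has not meaningfully changed; there is no way such a pair can ``force the unit to die''. Worse, your claim that such a pair ``corresponds to a pair of cancelling Lefschetz critical points'' so that the smooth total space is unchanged is not correct: two adjacent vanishing cycles in the same fibre do not cancel (both handles have index $n+1$). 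The only cancellation available is the stabilization move of Lemma~\ref{th:cancellation}, which requires first \emph{enlarging} the fibre by a Weinstein handle and then using its core sphere as the extra vanishing cycle. So as written, your construction neither preserves the almost-symplectic type nor supplies a mechanism to kill $\SH^*$.

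The paper's construction resolves this by changing the fibre. For each $V_i$ one attaches a pair of Weinstein $n$-handles to $M$, obtaining $\tilde M \supset M$ with new Lagrangian spheres $L^1_i, L^2_i$ engineered so that $L^2_i$ can be disjoined from $L^1_i$ by a \emph{smooth} isotopy (Lemma~\ref{th:disjoin}) but not a Lagrangian one, since $\HF^*(L^2_i,L^1_i)\neq 0$ (Lemma~\ref{th:classical-u}). The new vanishing cycles are $\bigl(\tau_{L^2_i}^\rho\tau_{L^1_i}(V_i),\,L^2_i,\,L^1_i\bigr)_i$ for an integer parameter $\rho$. Smooth displaceability makes the almost-symplectic type of the total space independent of $\rho$ (Lemma~\ref{th:homotopy}), and for $\rho=1$ Hurwitz moves plus repeated stabilization identify it with $E$ (Lemma~\ref{th:rho-is-1}); this is what secures the almost symplectomorphism. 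For $\rho\geq 2$, the nontrivial Floer cohomology feeds into the inequality of Property~\ref{th:hull} (via Lemma~\ref{th:dim-grow}) to force $\HW^*(\tilde\Delta_1,\tilde\Delta_1)=0$, and Property~\ref{th:iterate} then lets one peel off triples of vanishing cycles inductively until nothing is left. The idea missing from your proposal is exactly this tension: one needs spheres that cancel at the almost-symplectic level but interact nontrivially in the Fukaya category, and Hamiltonian isotopic spheres do the opposite.
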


This is partly based on the argument used in \cite{maydanskiy-seidel09} for $T^*S^n$, which in algebro-geometric terms would be the special case of an affine quadric. The algebraic nature of $X$ is important only insofar as it allows us to apply basic tools from symplectic Picard-Lefschetz theory. Concretely, we represent our manifold as the total space of a Lefschetz fibration, and then manipulate that description to construct the new space $\tilde{X}$ (this is the homologous recombination process mentioned in the title; the name comes from biology, see for instance \cite{capecchi89}). A different proof of Theorem \ref{th:1} has been obtained independently by Bourgeois-Ekholm-Eliashberg (currently unpublished, but based on the stabilization process from \cite[Section 6.2]{bourgeois-ekholm-eliashberg09}). In fact, their approach avoids Lefschetz fibrations, hence has the substantial advantage of applying to all finite type Weinstein manifolds $X$.

By combining Theorem \ref{th:1} with the argument from \cite{mclean09}, one arrives at the following conclusion (also independently noticed by Smith):

\begin{corollary} \label{th:1.5}
Take $X$ as in Theorem \ref{th:1}. Then there is an infinite sequence of finite type Liouville manifolds $\tilde{X}_k$, all of which are almost symplectomorphic to $X$, and such that $\tilde{X}_k$ is not symplectomorphic to $\tilde{X}_l$ for $k \neq l$.
\end{corollary}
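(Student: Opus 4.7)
The strategy is to couple Theorem~\ref{th:1} with an infinite family of ``local'' modifications coming from McLean's work \cite{mclean09}. Concretely, Theorem~\ref{th:1} produces a finite type Liouville manifold $\tilde X$ almost symplectomorphic to $X$ with $SH^*(\tilde X) = 0$; this vanishing acts as a blank slate that lets subsequent local modifications register globally in the symplectic cohomology.

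From \cite{mclean09} one extracts an infinite sequence $\{W_k\}_{k \geq 1}$ of finite type Liouville structures on $\bR^{2n}$ (with $2n = \dim_\bR X \geq 6$), pairwise non-symplectomorphic and all almost symplectomorphic to the standard one. McLean distinguishes them using a growth-rate invariant extracted from the action filtration on $SH^*$. For each $k$ set $\tilde X_k$ to be the end-connect-sum of $\tilde X$ with $W_k$: remove a small Darboux ball from $\tilde X$ and glue $W_k$ in along a cylindrical collar. Since $W_k$ is almost symplectomorphic to $\bR^{2n}$, this operation does not disturb the underlying almost complex data, so $\tilde X_k$ remains almost symplectomorphic to $X$.

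To separate the $\tilde X_k$ up to symplectomorphism, one applies a Mayer--Vietoris-type computation of symplectic cohomology (or of the filtered growth-rate invariant) under end-connect-sum. In the general case, $SH^*(A \# B)$ is assembled from $SH^*(A)$ and $SH^*(B)$ plus contributions from the separating contact hypersurface; but under the assumption $SH^*(\tilde X) = 0$, this specializes so that the relevant invariant of $\tilde X_k$ agrees with that of $W_k$. Hence $\tilde X_k$ and $\tilde X_l$ can be separated by the same invariant that separates $W_k$ from $W_l$.

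The main obstacle is establishing the additivity (or appropriate compatibility) of McLean's growth invariant under end-connect-sum, and here the vanishing assertion of Theorem~\ref{th:1} is essential: were $SH^*(\tilde X)$ nonzero, its contribution could in principle drown out the difference between $W_k$ and $W_l$, but vanishing rules this out and lets the local exotic piece determine the global invariant.
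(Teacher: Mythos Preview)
Your overall strategy---kill $SH^*$ via Theorem~\ref{th:1}, then boundary connect sum with exotic pieces diffeomorphic to a ball---is exactly the paper's. But several steps in your outline are imprecise or actually gapped.

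First, the invariant in \cite{mclean09} is not a growth rate of the action filtration; it is ring-theoretic. Over $\bK = \bF_p$ one counts solutions of $x^p = x$ in the even part of $SH^*$. The paper exploits this by taking a \emph{single} Liouville domain $U$ diffeomorphic to a ball with some finite number $N \geq 2$ of such solutions (Corollary~\ref{th:mclean}), and setting $\tilde X_k$ to be the boundary connect sum of $\tilde E$ with $k$ copies of $U$. Cieliebak's theorem \cite{cieliebak02} then gives $SH^*(\tilde X_k) \iso \bigoplus_{i=1}^k SH^*(U)$ as rings via the Viterbo restriction map, so the solution count is $N^k$. There is no Mayer--Vietoris obstacle to overcome: boundary connect sum is a subcritical $1$-handle attachment, and the additivity of $SH^*$ (including its ring structure) is a known consequence. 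Your description of the operation as ``remove a Darboux ball and glue $W_k$ along a collar'' is also not the right picture; one attaches a $1$-handle along a pair of boundary points of the disjoint union.

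Second, there is a genuine dimension gap in your version. McLean's exotic Liouville structures on $\bR^{2n}$ in \cite{mclean09} require $n \geq 4$, i.e.\ real dimension $\geq 8$, whereas Theorem~\ref{th:1} allows $\dim_\bR X = 6$. So you cannot simply cite the $W_k$ off the shelf in dimension $6$. The paper avoids this by constructing the ball-like domain $U$ directly (Sections~\ref{subsec:mclean}--\ref{subsec:milnor}) in every dimension $\geq 6$, via a symplectic mapping torus of a Milnor fibre followed by a $2$-handle attachment.
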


Even though we have not mentioned this so far, the freedom to use different coefficient fields $\bK$ is important since it allows for a more delicate version of homologous recombination:

\begin{theorem} \label{th:2}
Let $X$ be a smooth complex affine variety of real dimension $\geq 12$. Fix an integer $q \geq 1$. Then there is a finite type Liouville manifold $\tilde{X}$ almost symplectomorphic to $X$, and with the following property. If $\bK$ is a field whose characteristic divides $q$, the symplectic cohomology of $\tilde{X}$ with coefficients in $\bK$ vanishes. On the other hand, if the characteristic of $\bK$ does not divide $q$ (this includes characteristic $0$), the symplectic cohomology of $\tilde{X}$ with coefficients in $\bK$ vanishes if and only if the same holds for the original manifold $X$.
\end{theorem}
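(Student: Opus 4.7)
I propose to adapt the homologous recombination procedure behind Theorem~\ref{th:1} so that a multiplication-by-$q$ relation is built explicitly into the Floer-theoretic mechanism controlling $\SH(\tilde X)$. Whereas in Theorem~\ref{th:1} the vanishing cycles of a Lefschetz fibration on $X$ are modified in a way that makes the corresponding directed Fukaya category degenerate for every coefficient field simultaneously, here I want that degeneracy to be conditioned on $q=0$ in $\bK$.

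The first step would be identical to the proof of Theorem~\ref{th:1}: present $X$ as the total space of a Lefschetz fibration $\pi\colon X\to\bC$ with smooth fibre $F$. The jump from real dimension $6$ to real dimension $12$ suggests that one in fact performs the construction twice, nestedly: the fibre $F$ is itself recombined in a Theorem~\ref{th:1}-like manner so as to contain, as part of its Lefschetz structure, a pair of Lagrangian spheres $L, L_q \subset F$ together with a geometric realisation of a distinguished triangle
\[
L \xrightarrow{\ q\ } L \longrightarrow L_q \longrightarrow L[1]
\]
in the Fukaya category of $F$. Such a realisation can be produced by taking $q$ parallel copies of $L$ and gluing them via a matching-path construction in an auxiliary Lefschetz fibration on $F$ whose monodromy is a degree-$q$ cover; the resulting matching cycle $L_q$ is then smoothly embedded and Lagrangian. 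The outer fibration is finally recombined as in Theorem~\ref{th:1}, but with the vanishing cycles chosen so that the algebraic relation that would force $\SH(\tilde X)=0$ passes precisely through the morphism $q$ above.

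The conclusion then splits according to the characteristic of $\bK$. If $\mathrm{char}(\bK)\mid q$, the map $q\colon L\to L$ vanishes and $L_q$ becomes isomorphic to $L\oplus L[1]$, so the recombination collapses to a Theorem~\ref{th:1}-style argument and $\SH(\tilde X;\bK)=0$. If $\mathrm{char}(\bK)\nmid q$, the same map is invertible, so $L_q$ is quasi-isomorphic to zero; the long exact sequence relating $\SH(\tilde X;\bK)$ to $\SH(X;\bK)$ then degenerates, giving an abstract isomorphism between the two groups, so that one vanishes exactly when the other does. Preservation of the almost symplectic type under these modifications is checked by the same almost-complex-structure bookkeeping as in Theorem~\ref{th:1}.

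The main obstacle I anticipate is the geometric realisation of the triangle above: algebraically one wants a Lagrangian $L_q$ representing the mapping cone of multiplication by $q$, but producing such an object inside a Lefschetz fibration requires carefully controlled handle attachments and verification of the relevant Floer-theoretic identity with integer coefficients, not merely modulo~$q$. This is where the extra complex dimensions play their role, since one needs enough room both for the matching-cycle construction inside $F$ and for the subsequent recombination of the outer fibration to proceed without interference. A secondary difficulty is ensuring that the long exact sequences used above are natural in $\bK$, so that no unexpected torsion contributions spoil the characteristic dichotomy.
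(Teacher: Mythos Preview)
Your proposal has a genuine obstruction at its core. You want a Lagrangian \emph{sphere} $L_q$ sitting in a distinguished triangle $L \xrightarrow{q} L \to L_q \to L[1]$; but then over any field $\bK$ with $\mathrm{char}(\bK)\nmid q$ the cone is acyclic, so $L_q$ would be a zero object in the Fukaya category. This is impossible for an exact closed Lagrangian submanifold: $\HF^*(L_q,L_q;\bK)\cong H^*(S^n;\bK)$ is nonzero for every $\bK$. The same obstruction persists if $L_q$ is any closed exact Lagrangian, so no matching-cycle construction of the type you sketch can produce such an object, and the vanishing-cycle role you want $L_q$ to play (it must be a sphere to appear in a Lefschetz fibration) makes the problem unavoidable rather than a matter of bookkeeping.

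The paper circumvents this by encoding the $q$-torsion not in a single object but in a Floer group \emph{between} two honest Lagrangian spheres. The double handle attachment depends on a decomposition $S^{n-1}=U_+\cup U_-$ with $\chi(U_-)=1$, and one has $\HF^*(L^2,L^1)\cong \tilde H^{*-1}(U_-;\bK)$ by a Morse-theoretic reduction. Choosing $U_-$ to be the boundary of a regular neighbourhood of an embedded Moore space $S^1\cup_q e^2$ inside $S^{n-1}$ makes this group vanish exactly when $\mathrm{char}(\bK)\nmid q$. The dichotomy then comes from Lemmas~\ref{th:all-rho} and~\ref{th:rho-2}: when $\HF^*(L^2,L^1)=0$ the Dehn twist $\tau_{L^2}$ acts trivially on the relevant part of the Fukaya category and $\SH^*(\tilde E)\cong \SH^*(E)$; when it is nonzero, the inequality of Property~\ref{th:hull} forces all wrapped Floer groups of the thimbles to vanish. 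Incidentally, this also corrects your reading of the dimension hypothesis: the bound $\dim_\bR X\geq 12$ is not because the construction is nested, but because embedding the Moore space (a $2$-complex with a codimension-one self-intersection to resolve) into $S^{n-1}$ requires $n\geq 5$.
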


\begin{remark}
Symplectic cohomology can actually be defined with coefficients in $\bZ$ (or any other abelian group) as well. In the situation of Theorem \ref{th:2}, let us temporarily denote the integral version by $\SH^*(\tilde{X};\bZ)$, and its analogue with coefficients in $\bF_p$ by $\SH^*(\tilde{X};\bF_p)$. When considered as $\bZ/2$-graded abelian groups, these fit into a long exact sequence
\begin{equation}
\cdots \longrightarrow \SH^*(\tilde{X};\bZ) \xrightarrow{p} \SH^*(\tilde{X};\bZ) \longrightarrow \SH^*(\tilde{X};\bF_p) \longrightarrow \cdots
\end{equation}
Hence, vanishing of $\SH^*(\tilde{X};\bF_p)$ is equivalent to unique $p$-divisibility for $\SH^*(\tilde{X};\bZ)$.
\end{remark}

All of the symplectic structures constructed so far are actually Weinstein, which means that they admit Morse functions compatible with the Liouville structure. Let us introduce a notion of complexity, as the minimal number of critical points of such a function (see Section \ref{subsec:def-wein}). With this in mind, we can give a sharper version of Corollary \ref{th:1.5} for manifolds of sufficiently high dimension:

\begin{corollary} \label{th:2.5}
Take $X$ as in Theorem \ref{th:2}. Then there is an infinite sequence of manifolds as in Corollary \ref{th:1.5}, but with the additional property that all of them are of bounded complexity.
\end{corollary}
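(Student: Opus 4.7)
The strategy is to apply Theorem \ref{th:2} repeatedly with different parameters $q$, extracting the infinite sequence by varying $q$ rather than by iterating a complexity-increasing construction. Concretely, pick distinct primes $p_1 < p_2 < \cdots$ for which $SH^*(X;\bF_{p_k}) \neq 0$; such a sequence exists in the generic case, since (for instance) if $SH^*(X;\bQ) \neq 0$ then by universal coefficients $SH^*(X;\bF_p) \neq 0$ for cofinitely many primes $p$. Let $\tilde{X}_k$ be the manifold produced by Theorem \ref{th:2} applied to $X$ with the parameter $q = p_k$.

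Each $\tilde{X}_k$ is almost symplectomorphic to $X$ by construction, so almost symplectomorphic to every other $\tilde{X}_l$. For pairwise non-isomorphism, I would test symplectic cohomology over the varying coefficient fields $\bF_{p_k}$: by Theorem \ref{th:2}, $SH^*(\tilde{X}_k;\bF_{p_k}) = 0$ (because $p_k \mid q = p_k$), whereas for every $l \neq k$ we have $SH^*(\tilde{X}_l;\bF_{p_k}) \cong SH^*(X;\bF_{p_k}) \neq 0$, since $p_k \nmid p_l$ (distinct primes) and $p_k$ was chosen so that $X$ has non-vanishing symplectic cohomology in that characteristic. Hence $\tilde{X}_k$ and $\tilde{X}_l$ are distinguished by a symplectomorphism-invariant, proving they are pairwise non-symplectomorphic.

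The new content compared to Corollary \ref{th:1.5} is the uniform complexity bound, and this is where the main difficulty lies. One must check that the construction underlying Theorem \ref{th:2} produces $\tilde{X}$ using a number of Weinstein handles that depends on $X$ alone and not on $q$. Unlike the iterative construction used in proving Corollary \ref{th:1.5} (following McLean, where the handle count inevitably grows with $k$), here the parameter $q$ should enter only through gluing/attaching data on a fixed handle skeleton, in the same spirit as the Moore space $M(\bZ/q,n)$ being built from only two cells irrespective of $q$. Making this uniform-complexity statement precise requires inspecting the homologous recombination procedure in the proof of Theorem \ref{th:2}, and verifying that the vanishing cycles / framings used are the only place where $q$ appears. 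This is the key step; once it is established, the rest of the argument is formal, as sketched in the previous two paragraphs.
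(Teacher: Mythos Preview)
Your intuition about the complexity bound is correct and matches the paper: the homologous recombination construction underlying Theorem \ref{th:2} attaches a fixed number of Weinstein handles to the fibre (two per vanishing cycle) and then builds a Lefschetz fibration with a fixed number of vanishing cycles (three times the original number). The integer $q$ enters only through the choice of the submanifold $U_- \subset S^{n-1}$ used to build the disc $G$, so the bound $\complexity(\tilde{E}) \leq \complexity(M) + 5m$ is uniform in $q$. Your Moore-space analogy is exactly right.

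The genuine gap is in the distinguishing step. Your argument requires an infinite sequence of primes $p_k$ with $\SH^*(X;\bF_{p_k}) \neq 0$, and you acknowledge that this only holds ``in the generic case''. But Corollary \ref{th:2.5} is stated for \emph{every} $X$ as in Theorem \ref{th:2}, including, for instance, $X = \bC^n$ (or any subcritical Stein manifold), whose symplectic cohomology vanishes over all coefficient fields. For such $X$, Theorem \ref{th:2} produces manifolds $\tilde{X}$ with $\SH^*(\tilde{X};\bK) = 0$ for \emph{every} $\bK$ and every choice of $q$, so your test cannot separate any two of them. Even when $\SH^*(X;\bQ) \neq 0$, the universal-coefficients step you invoke is not routine for symplectic cohomology (which is a limit of Floer groups), so that case also needs more care than you give it.

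The paper sidesteps this by decoupling the recombination from $X$. It applies Theorem \ref{th:selective-vanishing} not to $X$ but to an auxiliary affine variety $T$ (a symplectic mapping torus of a Milnor fibre) which is known to have $\SH^*(T;\bK) \neq 0$ for \emph{all} $\bK$; this yields, for each prime $q$, a bounded-complexity $\tilde{T}_q$ with $\SH^*(\tilde{T}_q;\bF_p) = 0$ iff $p = q$. After attaching a two-handle one gets a disc $\tilde{U}_q$. Meanwhile $X$ is first modified (via Theorem \ref{th:vanish-0}) to a single $\tilde{E}$ with $\SH^*(\tilde{E}) = 0$ everywhere, and then one takes the boundary connect sum $\tilde{X}_q = \tilde{E} \,\natural\, \tilde{U}_q$. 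Now $\SH^*(\tilde{X}_q) \cong \SH^*(\tilde{U}_q)$, so the $\tilde{X}_q$ are pairwise distinguished regardless of the symplectic cohomology of the original $X$, and the complexity bound comes from the same handle count plus one for the connect sum. If you restrict to $X$ with $\SH^*(X;\bF_p) \neq 0$ for infinitely many $p$, your more direct argument does go through and is a legitimate shortcut; but for the statement as written you need the auxiliary-manifold trick.
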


This contrasts with the approach from \cite{mclean09} in which one takes repeated boundary connect sums that, at least intuitively, would appear to increase complexity (one has to be careful here since no lower bound for the complexity of such a sum has actually been proved; establishing such bounds is a major open problem in general).


\subsection{Infinite type Liouville manifolds}
McLean \cite{mclean06} constructed an example of a Liouville structure on $\bR^{2n}$, $n \geq 4$, which is not of finite type (because its symplectic cohomology is of uncountable dimension as a vector space, whereas that of any finite type Liouville manifold is at most of countable dimension). By combining his idea with homologous recombination, we see:

\begin{theorem} \label{th:3}
Fix an arbitrary set of primes $\bP$. For any $n \geq 6$, there is a Liouville manifold diffeomorphic to $\bR^{2n}$, whose symplectic cohomology with coefficients in $\bK = \bF_p$ is of countable dimension if and only if $p \in \bP$.
\end{theorem}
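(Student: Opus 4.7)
The strategy is to feed the selective homologous recombination of Theorem~\ref{th:2} into McLean's construction \cite{mclean06} of an infinite-type Liouville structure on $\bR^{2n}$. Fix a smooth complex affine variety $Y$ of real dimension $2n \ge 12$ whose symplectic cohomology is nonzero over every prime field; for example $Y = (\bC^\times)^n$ will do, since then $\SH^*(Y;\bF_p) \cong H^*(\Lambda T^n;\bF_p)$ is nonzero for every $p$. Enumerate $\bP$ (empty, finite, or countably infinite) as $\{p_1, p_2, \ldots\}$, and set $q_k = p_1 p_2 \cdots p_k$ (with the convention $q_k = 1$ when $\bP = \emptyset$, and $q_k = q_{|\bP|}$ once $k$ exceeds $|\bP|$ in the finite case). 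For each $k$, I apply Theorem~\ref{th:2} to $Y$ with $q = q_k$ to produce a finite type Liouville manifold $\tilde Y_k$, almost symplectomorphic to $Y$, with $\SH^*(\tilde Y_k; \bF_p) = 0$ for every $p \in \{p_1,\ldots,p_k\}$ and $\SH^*(\tilde Y_k; \bF_p) \neq 0$ for every other prime $p$; the non-vanishing statement uses the ``if and only if'' clause of Theorem~\ref{th:2}.

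Next, I reassemble the $\tilde Y_k$ by performing McLean's infinite end-connect-sum along the same tree pattern as in \cite{mclean06}, yielding a Liouville manifold $\tilde W$. Because each $\tilde Y_k$ is diffeomorphic to $Y$ (almost symplectomorphisms being diffeomorphisms) and because McLean's smooth trivialization of the total space as $\bR^{2n}$ depends only on the smooth types of the blocks and on the gluing pattern, $\tilde W$ is diffeomorphic to $\bR^{2n}$. For the symplectic cohomology in characteristic $p$: if $p \in \bP$, say $p = p_j$, then $\tilde Y_k$ is $\bF_p$-acyclic for every $k \ge j$; iterated application of the Viterbo/Mayer--Vietoris long exact sequence for end-connect-sums then shows that attaching these acyclic blocks does not affect $\SH^*$, so $\SH^*(\tilde W; \bF_p)$ is isomorphic to the (finite-dimensional) symplectic cohomology of the finite-type manifold obtained from $\tilde Y_1, \ldots, \tilde Y_{j-1}$ alone, and is in particular of countable dimension. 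If instead $p \notin \bP$, every $\tilde Y_k$ contributes nontrivially to $\SH^*$ in characteristic $p$, and the argument from \cite{mclean06} producing uncountably many independent classes from an infinite tree of nontrivial blocks applies verbatim, so $\SH^*(\tilde W; \bF_p)$ has uncountable dimension.

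The main obstacle is to verify that McLean's constructions really depend on his building blocks only through their diffeomorphism type and through the nonvanishing of their symplectic cohomology, so that one may freely substitute the $\tilde Y_k$ at each node of the tree. One must check that his uncountability argument produces at least one independent class per block whenever the block has nontrivial $\SH^*$, and that his trivialization of the total space as $\bR^{2n}$ is insensitive to replacing his original blocks by almost-symplectomorphic ones. Both points are essentially smooth-topological or Floer-theoretically functorial in nature, and should require no new analytic input beyond what Theorem~\ref{th:2} and \cite{mclean06} already provide; but they do demand a careful re-reading of \cite{mclean06}, in particular of the compatibility of the infinite-type limit with the Viterbo exact sequence.
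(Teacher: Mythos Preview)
Your proposal has a genuine topological gap. You take $Y = (\bC^\times)^n$, which is homotopy equivalent to the $n$-torus, and then build $\tilde W$ as an infinite end-connect-sum of manifolds $\tilde Y_k$ each diffeomorphic to $Y$. Such a manifold has infinitely generated homology (each block contributes a copy of $H_*(T^n)$), so it cannot possibly be diffeomorphic to $\bR^{2n}$. McLean's trivialization does not depend ``only on the smooth types of the blocks'' in the way you suggest: it depends crucially on those blocks being discs, so that every finite stage of the connect sum is again a disc and the direct limit is euclidean space.

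The paper fixes this by building the blocks much more carefully. One starts from the symplectic mapping torus $T$ of the Milnor fibre of a weighted homogeneous singularity whose link is an integral homology sphere; this $T$ is a Liouville domain with $H_*(T)\cong H_*(S^1)$ and nonvanishing $\SH^*$ over every field (Section~\ref{subsec:milnor}). One then applies the selective recombination of Theorem~\ref{th:selective-vanishing} to $T$, and only afterwards attaches a single subcritical $2$-handle to kill $\pi_1$. By the h-cobordism argument in Section~\ref{subsec:milnor}, the resulting domains $\tilde U_q$ are genuinely diffeomorphic to $D^{2n}$, and by \cite{cieliebak02} the $2$-handle does not change $\SH^*$. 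Now the infinite boundary connect sum $W = \bigcup_k W_k$ of disc blocks is diffeomorphic to $\bR^{2n}$, and \eqref{eq:inverse-limit} together with \cite{cieliebak02} give the clean product formula $\SH^*(W)\cong\prod_k \SH^*(\tilde U_{q_k})$; the countable/uncountable dichotomy then follows immediately from the elementary fact that an infinite product of nonzero vector spaces has uncountable dimension. The point you are missing is precisely this detour through the Milnor fibre construction and the subsequent $2$-handle attachment, which is what produces contractible building blocks with the prescribed selective vanishing.
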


\begin{corollary} \label{th:3.5}
Let $M$ be a finite type Liouville manifold of dimension $\geq 12$, with the property that the map
$H^1_{\mathrm{cpt}}(M;\bR) \rightarrow H^1(M;\bR)$ is onto. Then there is an uncountable family of Liouville manifolds which are almost symplectomorphic to $M$, but pairwise non-symplectomorphic.
\end{corollary}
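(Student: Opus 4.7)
The plan is to combine Theorem~\ref{th:3} with a boundary connect sum construction. Write $\dim M = 2n$; since $\dim M \geq 12$ we have $n \geq 6$. Theorem~\ref{th:3} then provides, for each subset $\bP$ of the primes, a Liouville manifold $V_\bP$ diffeomorphic to $\bR^{2n}$ whose symplectic cohomology $\SH^*(V_\bP;\bF_p)$ is of countable $\bF_p$-dimension if and only if $p \in \bP$. For each $\bP$ I would form the Liouville end-connect-sum
\begin{equation*}
\tilde{M}_\bP := M \,\natural\, V_\bP,
\end{equation*}
producing an uncountable family, which should be the desired one.

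First I would check that $\tilde{M}_\bP$ is a well-defined Liouville manifold almost symplectomorphic to $M$. Since $V_\bP$ is smoothly $\bR^{2n}$, the underlying smooth manifold of $\tilde{M}_\bP$ is unchanged, and the compatible almost complex structures glue to yield an almost symplectomorphism with $M$. To perform the connect sum as a Liouville operation one needs the Liouville primitive on $M$ to be cylindrical near the gluing locus. This is where the hypothesis enters: any two Liouville primitives for $\omega_M$ differ by a closed $1$-form, and surjectivity of $H^1_{\mathrm{cpt}}(M;\bR) \to H^1(M;\bR)$ allows one to modify the primitive by a compactly supported exact $1$-form so that, outside a compact set (including a neighborhood of the chosen gluing locus in the end of $M$), it takes the standard cylindrical form; this modification does not change the Liouville isomorphism class of $M$.

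Second, one invokes a connect-sum formula for symplectic cohomology (in the spirit of Cieliebak): $\SH^*(\tilde{M}_\bP;\bF_p)$ fits into a long exact sequence whose other terms come from $\SH^*(M;\bF_p)$, $\SH^*(V_\bP;\bF_p)$, and the ordinary cohomology of $\tilde{M}_\bP$. Because $M$ has finite type and $\tilde{M}_\bP$ is diffeomorphic to $M$, both $\SH^*(M;\bF_p)$ and $H^*(\tilde{M}_\bP;\bF_p)$ are at most of countable $\bF_p$-dimension. Hence $\SH^*(\tilde{M}_\bP;\bF_p)$ is of countable $\bF_p$-dimension if and only if $\SH^*(V_\bP;\bF_p)$ is, i.e.\ if and only if $p \in \bP$. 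Thus the subset $\bP$ of primes can be recovered as a symplectomorphism invariant of $\tilde{M}_\bP$, and since there are uncountably many such subsets we obtain an uncountable family of pairwise non-symplectomorphic Liouville manifolds, all almost symplectomorphic to $M$.

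The main obstacle is the second step in the setting where $V_\bP$ is of \emph{infinite} type; the finite-type version of the connect-sum formula is classical. I would expect to handle the infinite-type case by exhausting $V_\bP$ by finite-type Liouville subdomains (following the construction behind Theorem~\ref{th:3}) and passing to a direct limit of the corresponding connect-sum exact sequences. The key technical point is then to check that this limiting procedure is compatible with the boundary connect sum and preserves the countability/uncountability dichotomy for $\SH^*(V_\bP;\bF_p)$ guaranteed by Theorem~\ref{th:3}.
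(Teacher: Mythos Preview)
Your overall strategy --- form an infinite boundary connect sum of $M$ with the building blocks from Theorem~\ref{th:3}, then read off the set $\bP$ from the countability of $\SH^*(\cdot;\bF_p)$ --- is the paper's strategy as well. The paper's implementation is slightly cleaner: rather than first assembling the infinite-type $V_\bP$ and then trying to connect-sum it with $M$, it redoes the proof of Theorem~\ref{th:3} so that the truncated $M$ is already part of each finite-stage domain $W_k$, and then takes the union. This sidesteps your ``main obstacle'' entirely, since one never needs to connect-sum with an infinite-type manifold; the inverse-limit formula \eqref{eq:inverse-limit} together with Cieliebak's subcritical invariance \cite{cieliebak02} handles everything. (Incidentally, Cieliebak gives a direct-sum isomorphism $\SH^*(A \natural B) \cong \SH^*(A) \oplus \SH^*(B)$, not merely a long exact sequence.)

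There is, however, a genuine gap in your argument: you have misidentified the role of the hypothesis $H^1_{\mathrm{cpt}}(M;\bR) \twoheadrightarrow H^1(M;\bR)$. You do \emph{not} need it to make the Liouville form cylindrical near the gluing locus --- $M$ is of finite type, so its Liouville form is already conical at infinity. The hypothesis is needed for a different reason. The manifolds $\tilde{M}_\bP$ you construct are of \emph{infinite} type, and for such manifolds symplectic cohomology as defined in \eqref{eq:inverse-limit} is a priori only invariant under \emph{exact} symplectomorphisms, not arbitrary ones (see Section~\ref{subsec:def-sh}). Since $\tilde{M}_\bP$ is diffeomorphic to $M$, it inherits the surjectivity of $H^1_{\mathrm{cpt}} \to H^1$, and this is precisely what guarantees (via the remark at the end of Section~\ref{subsec:def}) that any symplectomorphism between two of the $\tilde{M}_\bP$ can be deformed to an exact one. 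Without this step, your argument only shows that the $\tilde{M}_\bP$ are pairwise non-\emph{exactly}-symplectomorphic, which is the weaker conclusion the paper mentions just after the statement of Corollary~\ref{th:3.5}.
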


The assumption on the first cohomology is somewhat peripheral to the whole argument, and can be removed if one is willing to replace symplectomorphism by the stricter notion of exact symplectomorphism (see Section \ref{subsec:def}). Theorem \ref{th:3} and Corollary \ref{th:3.5} were proved independently by McLean \cite{mclean08}, whose approach is somewhat different since it relies heavily on the ring structure of symplectic cohomology, following \cite{mclean09}. In fact, McLean's argument should also cover the dimensions $10$ and $8$, and possibly $6$ as well, which are missing from our results.

\subsection{Acknowledgments}
This research was conducted while the first author was a Clay Research Fellow. The second author was partially supported by NSF grant DMS-1005288. We thank Kai Cieliebak for letting us have the manuscript of his book in progress with Eliashberg, and Mark McLean for explaining some of his unpublished work to us.

\section{Concepts and tools}

\subsection{Exact symplectic manifolds\label{subsec:def}}
We start by introducing the main relevant classes of symplectic manifolds. Some other references covering similar basic material are \cite{eliashberg-gromov93, eliashberg94, seidel-smith04b, bourgeois-ekholm-eliashberg09} (there is also a forthcoming book \cite{cieliebak-eliashberg}, which looks likely to become the definitive reference on the subject). All our symplectic manifolds $M$ will be exact, which means that they come with a distinguished primitive $\theta_M$ for the symplectic form $\omega_M = d\theta_M$. The Liouville vector field $Z_M$ is defined by $\omega_M(Z_M,\cdot) = \theta_M$. We say that the exact symplectic structure is complete if the flow of $Z_M$ is defined for all time.

\begin{itemize} \parskip1em
\item A {\em Liouville domain} is a compact exact symplectic manifold with boundary, such that $Z_M$ points strictly outwards along $\partial M$.

\item A {\em Liouville manifold} is a complete exact symplectic manifold, which admits an exhausting (proper and bounded below) function $h: M \rightarrow \bR$ such that the following holds. There is a sequence of numbers $c_k \in \bR$, going to $+\infty$ as $k \rightarrow \infty$, such that $dh(Z_M) > 0$ along $h^{-1}(c_k)$ (note that the corresponding sublevel sets yield an exhaustion of $M$ by Liouville domains).

\item A Liouville manifold is called of {\em finite type} if in fact, $dh(Z_M) > 0$ outside a compact subset.
\end{itemize}

Any Liouville domain can be canonically enlarged to a finite type Liouville manifold by attaching an infinite cone to the boundary. Conversely, any finite type Liouville manifold is obtained in this way (one truncates it to a Liouville domain which is a sufficiently large sublevel set of the exhausting function $h$).

There are several variations on the notion of symplectic isomorphism between two such manifolds.

\begin{itemize} \parskip1em
\item Two Liouville domains $M$ and $\tilde{M}$ are called {\em deformation equivalent} if there is a diffeomorphism $\phi: M \rightarrow \tilde{M}$ and a one-parameter family of Liouville domain structures on $M$, which interpolate between $(\omega_M,\theta_M)$ and $(\phi^*\omega_{\tilde{M}},\phi^*\theta_{\tilde{M}})$.

\item An {\em exact symplectomorphism} between Liouville manifolds is a diffeomorphism $\phi: M \rightarrow \tilde{M}$ such that $\phi^*\theta_{\tilde{M}} - \theta_M$ is an exact one-form.

\item Now, suppose that $M$ and $\tilde{M}$ are Liouville manifolds of finite type. A {\em strictly exact symplectomorphism} $\phi: M \rightarrow \tilde{M}$ is a diffeomorphism such that $\phi^*\theta_{\tilde{M}} - \theta_M$ is the derivative of a compactly supported function.
\end{itemize}

(We have omitted some intermediate possibilities.) If we have two deformation equivalent Liouville domains, their enlargements are Liouville manifolds which are symplectomorphic in the strictly exact sense, and the converse is also true. Next, take two Liouville manifolds, at least one of which is of finite type.  Then, every symplectic isomorphism between them can be deformed to an exact one, by combining it with a suitable non-Hamiltonian flow \cite[Lemma 1.1]{bourgeois-ekholm-eliashberg09}. Inspection of the argument shows that one can replace the finite type condition by the surjectivity of the map $H^1_{\mathrm{cpt}}(M;\bR) \rightarrow H^1(M;\bR)$.

\subsection{Weinstein manifolds\label{subsec:def-wein}}
There is a special class of Liouville manifolds, which is somewhat closer to the original motivation from Stein geometry \cite{eliashberg90}.

\begin{itemize}\parskip1em
\item A {\em Weinstein manifold} is a complete exact symplectic manifold, which admits an exhausting Morse function $h$ such that the following holds. Outside the critical point set of $h$, $dh(Z_M) > 0$. At each critical point $x$ of $h$, $Z_M = 0$, and the quadratic form on $TM_x$ given by $X \mapsto D^2 h(X,DZ_M(X))$ is positive definite (this is a weak version of the notion of gradient-like vector field; it is equivalent to the formulation in \cite{bourgeois-ekholm-eliashberg09}).

\item A Weinstein manifold is called {\em of finite type} if $\theta_M$ vanishes only at finitely many points (by definition, these will be the critical points of any Morse function $h$ with the properties described above).

\item A {\em Weinstein domain} is a compact exact symplectic manifold with boundary, such that the following holds. It admits a Morse function $h: M \rightarrow (-\infty,0]$ such that $h^{-1}(0) = \partial M$ is a regular level set, and which otherwise has the same properties as for a Weinstein manifold.
\end{itemize}

We need to elaborate a little bit more on the relation between the two notions. Consider a Liouville domain $M$, and its standard enlargement, obtained by attaching an infinite cone $[1,\infty) \times \partial M$, on which the one-form is $r(\theta|\partial M)$, with $r$ the radial variable. In one direction, suppose that $M$ is a Weinstein domain, with a function $h$ as in the definition. One can extend this function to the cone, by sending $(r,x) \mapsto h(x) + (r-1)(Z_M.h)(x)$ for $(r,x) \in [1,\infty) \times \partial M$. The extension is exhausting, and its derivative in the direction of the Liouville vector field is positive on the cone. It is only $C^1$ along $\{1\} \times \partial M$, but one can easily smooth it, and this shows that the enlargement is a Weinstein manifold of finite type. In the other direction, we start with a general Liouville domain $M$, and suppose that its enlargement is a finite type Weinstein manifold, with corresponding exhausting Morse function $h$. Since $\partial_r h > 0$ outside a compact subset, a sufficiently large level set $h^{-1}(c)$ will be the graph of a function $\partial M \rightarrow [1,\infty)$. Hence, while it is not clear whether $M$ itself will be a Weinstein domain, it is certainly deformation equivalent to one within the class of Liouville domains.

Given a Liouville domain $M^0$ and an embedding of a sphere $S^{r-1} \hookrightarrow \partial M^0$ such that the pullback of $\theta_{M^0}$ to the sphere vanishes,  one can define a larger Liouville domain $M^1 \supset M^0$, depending on a choice of framing data, by attaching a Weinstein $r$-handle to the boundary \cite{weinstein91}. As an example, suppose that $M^0$ has two connected components. One can then take one point on the boundary of each component, which together form a sphere of the trivial dimension $r-1 = 0$, and attach a Weinstein one-handle. This is called taking the boundary connect sum of the two components. The other important case for us is the largest possible value $r = \mathrm{dim}(M)/2$. These are called critical Weinstein handles, and will be reviewed in more detail in Section \ref{subsec:weinstein} below.
If $M^0$ is a Weinstein domain, then the corresponding Morse function can be extended over $M^1$ (acquiring one more critical point), which is then again a Weinstein domain. Conversely, each Weinstein domain can be written as the result of starting with a disjoint union of balls and attaching finitely many Weinstein handles.

We will occasionally use a more quantitative version of the remarks above. Let $M$ be a finite type Liouville manifold. Suppose that there is a compactly supported function $k$ such that the modified one-form $\theta_M + dk$ is Weinstein of finite type. We define the {\em complexity} $\complexity(M)$ to be the minimal number of zeros of $\theta_M + dk$, taken over all such $k$ (if there is none, the complexity is set to infinity). By definition, complexity is invariant under strictly exact symplectomorphisms, and therefore can also be considered as an invariant of Liouville domains, up to deformation equivalence.
In the case where $M^1$ is obtained from $M^0$ by attaching a single Weinstein handle, we have
\begin{equation} \label{eq:complexity-one}
\complexity(M^1) \leq \complexity(M^0) + 1.
\end{equation}

\begin{example} \label{th:affine}
There is a fundamental class of examples \cite{eliashberg-gromov93}, which we recall in order to have its properties handy for reference. Let $X \subset \bC^N$ be a smooth affine variety. For generic $x_0 \in \bC^N$, the function $h(x) = \frac{1}{4} \|x - x_0\|^2$ is Morse \cite[\S 6]{milnor63}. Moreover, since it is real algebraic, it can have only finitely many critical points. If we take $\theta_X = -dh \circ I_X$ ($I_X$ being the complex structure), then $\omega_X = d\theta_X $ is the restriction of the standard constant symplectic form on $\bC^{N}$ to $X$. The Liouville vector field $Z_X$ is just the gradient vector field of $h$. Since that satisfies $\|\nabla h\|^2 \leq h$, it is also complete. Hence, equipping $X$ with these structures makes it into a Weinstein manifold of finite type.

It is worth while emphasizing that many other possible choices lead to equivalent results. Let $\tilde{h}$ be any exhausting plurisubharmonic function on $X$. This means that if we take $\tilde{\theta}_X = -d\tilde{h} \circ I_X$, then $\tilde{\omega}_X = d\tilde{\theta}_X$ is K{\"a}hler. Suppose in addition that this exact symplectic structure is complete (which, by the way, can always be achieved by reparametrizing a given exhausting plurisubharmonic function \cite[Lemma 3.1]{biran-cieliebak01}). By a result from \cite[Part IV]{cieliebak-eliashberg}, any two such structures are exact symplectomorphic. In particular, $(\tilde{\omega}_X,\tilde{\theta}_X)$ is exact symplectomorphic to the previously defined $(\omega_X,\theta_X)$.
\end{example}


\subsection{Symplectic cohomology\label{subsec:def-sh}}
We will summarize some important properties of this invariant in a black box fashion. Besides the original papers \cite{viterbo97a, viterbo97b}, a few references are \cite{cieliebak02, oancea04b, seidel07, bourgeois-oancea09, cieliebak-frauenfelder-oancea09, ritter10}. For any Liouville domain $M$ and coefficient field $\bK$, symplectic cohomology yields a $\bZ/2$-graded $\bK$-vector space $\SH^*(M)$, which is of at most countable dimension over $\bK$. Moreover, if $\epsilon: U \rightarrow M$ is an embedding of one such domain into another of the same dimension, such that $\epsilon^*\theta_{M} - c\theta_{U}$ is exact for some constant $c>0$ (which implies that the embedding is conformally symplectic), we get a restriction map $\SH^*(M) \rightarrow \SH^*(U)$. This is functorial with respect to compositions of embeddings. Moreover, it is invariant under isotopies of embeddings, within the same class (this is proved by a parametrized version of the argument which constructs the restriction maps). As an elementary consequence of these properties, deformation equivalent Liouville domains have isomorphic symplectic cohomology groups. Via the correspondence between Liouville domains and finite type Liouville manifolds, one then defines symplectic cohomology for the latter class. The outcome is clearly invariant under strictly exact symplectic isomorphisms; by thinking a little about conformally symplectic embeddings, one sees that it is in fact invariant under exact symplectic isomorphisms; and hence, by the observation made previously, under general symplectic isomorphisms of finite type Liouville manifolds.

More generally, for any Liouville manifold $M$ which is not necessarily of finite type, one defines
\begin{equation} \label{eq:inverse-limit}
\SH^*(M) = \underleftarrow{\mathit{lim}} \SH^*(U)
\end{equation}
where the limit is over all pairs $(U,k)$ consisting of a subdomain (a compact codimension zero submanifold) $U \subset M$ and a function $k$ on $M$, such that the Liouville vector field of $\theta_M + dk$ points strictly outwards along $\partial U$. Note that by definition, there is a cofinal family for which $k = 0$, so one could also restrict attention to such subdomains. Moreover, in the finite type case there is a cofinal family for which the inverse system is constant, and we therefore recover the previous definition. One sees from \eqref{eq:inverse-limit} that $\SH^*(M)$ is invariant under exact symplectic isomorphisms. It is not clear whether, in the infinite type case, it is actually invariant under general symplectic isomorphisms. However, this is true for manifolds with the property that $H^1_{\mathrm{cpt}}(M;\bR) \rightarrow H^1(M;\bR)$ is surjective.

Symplectic cohomology carries a rich set of additional algebraic structures (see for instance \cite{seidel07, ritter10}). Most importantly for our purpose, it is a unital graded commutative ring. Moreover, there is a canonical map from ordinary to symplectic cohomology, which is compatible with the ring structures.

\subsection{Lefschetz fibrations and Lagrangian Floer cohomology\label{subsec:lefschetz}}
Let $M$ be a Liouville domain of dimension $2n$, and suppose that we are given an ordered collection of Lagrangian spheres $(V_1,\dots,V_m)$. More precisely, each $V_i$ should be Lagrangian, exact (which is trivial unless $n = 1$), and should come with a diffeomorphism $S^n \rightarrow V_i$, unique up to isotopy and composition with elements of $O(n+1)$ (the additional data provided by such a diffeomorphism is trivial for $n \leq 3$ thanks to \cite{smale59, cerf68}). One can then build an exact Lefschetz fibration over the disc with fibre $M$ such that the $V_i$ (in the given order) form a basis of vanishing cycles. After rounding off the corners and a minor manipulation of the symplectic form, the total space of this Lefschetz fibration is a Liouville domain $E$ of dimension $2n+2$. Auxiliary choices are required in order to construct $E$, but the result is unique up to deformation. In addition, it comes with a collection of Lefschetz thimbles $(\Delta_1,\dots,\Delta_m)$, which are Lagrangian discs in $E$ with Legendrian boundaries, determined by the collection $(V_1,\dots,V_m)$.

\begin{remark} \label{th:lefschetz-handles}
Alternatively, this construction can be thought of as a special case of critical Weinstein handle attachment, as follows. Take $D^2 \times M$ and round off the corners, forming a Liouville domain. After a suitable deformation of the exact symplectic structure, each vanishing cycle $V_i$ gives rise to a Legendrian sphere $\Lambda_i$ in the boundary of this domain. Attaching Weinstein handles to those yields a Liouville domain deformation equivalent to $E$ (see \cite[Section 7]{bourgeois-ekholm-eliashberg09} for a more detailed discussion). In view of \eqref{eq:complexity-one}, this implies a bound on the complexity: \rm{
\begin{equation} \label{cor:complexity-fibration}
\complexity(E) \leq \complexity(M) + m.
\end{equation}}
\end{remark}

Fix a coefficient field $\bK$. Even though the computation of symplectic cohomology is our ultimate target, various versions of Lagrangian Floer cohomology will play a crucial role along the way. Choose orientations and (necessarily trivial) {\em Spin} structures for the Lefschetz thimbles, which then induce corresponding structures on the vanishing cycles. We will consider the Floer cohomology groups $\HF^*(V_i,V_j)$ taken inside the fibre $M$, as well as the wrapped Floer cohomology \cite{fukaya-seidel-smith07b, abouzaid-seidel07} groups $\HW^*(\Delta_i,\Delta_j)$ in $E$. Both are $\bZ/2$-graded $\bK$-vector spaces. In addition to standard Floer theory techniques, four relations between these invariants will play a crucial role.

\begin{property} \label{th:open-closed}
$\SH^*(E)$ vanishes if and only if $\HW^*(\Delta_i,\Delta_i)$ vanishes for all $i$.
\end{property}

As one can see from Section \ref{subsec:results}, vanishing or nonvanishing of symplectic cohomology is essential in all our arguments. Property \ref{th:open-closed} reduces this to the corresponding question for wrapped Floer cohomology, which can be tackled one thimble at a time.

\begin{property} \label{th:hull}
Suppose that $\HW^*(\Delta_1,\Delta_1) \neq 0$. Then, for every exact, oriented and {\em Spin} Lagrangian submanifold $W \subset M$ (either closed or with Legendrian boundary) we have
\begin{multline} \label{eq:ineq}
\dim_\bK \, \HF^*(W,V_1)  \leq \\ \sum  \Big(\dim_\bK \HF^*(W,V_{i_r}) \cdot \dim_\bK \HF^*(V_{i_r},V_{i_{r-1}}) \cdots \dim_\bK \HF^*(V_{i_2},V_{i_1}) \Big),
\end{multline}
where the sum is over all $r > 1$ and $1 = i_1 < i_2 < \cdots < i_r \leq m$.
\end{property}

The informal meaning is that the wrapped Floer cohomology of $\Delta_1$ will vanish if $V_1$ is sufficiently independent of the other vanishing cycles. In order to be able to iterate this argument, the following observation is useful. Consider the same fibre $M$, but use only the vanishing cycles $(V_2,\dots,V_m)$. The Lefschetz fibration constructed in this way will be denoted by $\tilde{E}$, and its Lefschetz thimbles by $(\tilde{\Delta}_2,\dots,\tilde{\Delta}_m)$.

\begin{property} \label{th:iterate}
Suppose that $\HW^*(\tilde{\Delta}_i,\tilde{\Delta}_i) = 0$ for all $i$, and $\HW^*(\Delta_1,\Delta_1) = 0$ as well. Then, all $\HW^*(\Delta_i,\Delta_i)$ vanish.
\end{property}

Finally, we have the following meta-principle, which says that the wrapped Floer cohomology of Lefschetz thimbles in $E$ can in principle always be computed in terms of pseudo-holomorphic curves in the fibre $M$:

\begin{property} \label{th:meta}
The wrapped Floer cohomology groups $\HW^*(\Delta_i,\Delta_i)$ depend only on the quasi-isomorphism type of the full $A_\infty$-subcategory of the Fukaya category of $M$ having objects $(V_1,\dots,V_m)$.
\end{property}

The rest of this section concerns the derivations of these four properties. Each of them is a combination of known results (in fact, in some cases there is more than one possible approach).

\subsection{Wrapping and Dehn twists\label{subsec:wrap1}}
We begin with Property \ref{th:hull}. This is a minor variation of \cite[Proposition 6.2]{maydanskiy-seidel09}, hence we will only outline the argument. Let $\Fuk(M)$ be the Fukaya category of the fibre, and $\tw(\Fuk(M))$ the associated category of twisted complexes. One has an exact triangle of twisted complexes
\begin{equation} \label{eq:multitwist}
\xymatrix{
\tau_{V_m}^{-1}\cdots \tau_{V_3}^{-1}\tau_{V_2}^{-1}(V_1) \ar[rr]^{q} && V_1 \ar[dl] \\ & \mathrm{Cone}(q) \ar[ul]^{[1]} &
}
\end{equation}
Suppose that $\HW^*(\Delta_1,\Delta_1)$ is nonzero. A geometric argument involving the first stage of wrapping \cite[Section 5]{maydanskiy-seidel09} shows that the map $q$ in \eqref{eq:multitwist} must vanish, hence $V_1$ must be a direct  summand of $\mathrm{Cone}(q)$. To write down that cone object explicitly, we take the full $A_\infty$-subcategory of $\Fuk(M)$ with objects $V_i$, and replace it by a quasi-isomorphic strictly unital $A_\infty$-category, denoted by $\scrB$. A repeated application of the algebraic expression for a Dehn twist \cite[Corollary 17.17]{seidel04} shows that $\mathrm{Cone}(q)$ is isomorphic to the image under the quasi-isomorphic embedding $\tw(\scrB) \rightarrow \tw(\Fuk(M))$ of the twisted complex
\begin{equation} \label{eq:twisted}
C = \bigoplus \hom_{\scrB}(V_{i_1},V_{i_2})^\vee[-1] \otimes \cdots \otimes \hom_{\scrB}(V_{i_{r-1}},V_{i_r})^\vee[-1] \otimes V_{i_r}[1],
\end{equation}
where the direct sum ranges over all terms as in \eqref{eq:ineq}, and the whole comes equipped with a suitable differential $\partial_C$. We will not describe the differential completely, but the following properties are important. All nonzero terms of the differential preserve or increase $r$. Moreover, those that preserve $r$ just consist of applying the dual of $\mu^1_{\scrB}$ to one of the morphism groups in \eqref{eq:twisted}, tensored with the identity on all the other factors as well as on $V_{i_r}$.

A Lagrangian submanifold $W$ as in \eqref{eq:ineq} may not be an object of $\Fuk(M)$, since we have not required it to be closed. Still, it defines a cohomologically unital left $A_\infty$-module over $\Fuk(M)$, hence by restriction an $A_\infty$-module $\scrW$ over $\scrB$. Recall that such a module is a cohomologically unital $A_\infty$-functor $\scrB \rightarrow \Ch$ into the dg category of $\bZ/2$-graded chain complexes of $\bK$-vector spaces. On the cohomological level, this simply associates to each $V_i$ the Floer cohomology $\HF^*(W,V_i)$. Extend the functor to $\tw(\scrB) \rightarrow \Ch$ in the essentially unique way, still denoting that by $\scrW$. To compute the cohomology of $\scrW(C)$, one can use the filtration by $r$. In view of the properties of $\partial_C$ mentioned above, the resulting spectral sequence starts with
\begin{equation} \label{eq:e1}
\bigoplus \HF^*(V_{i_1},V_{i_2})[-1]^\vee \otimes \cdots \otimes \HF^*(V_{i_{r-1}},V_{i_r})[-1]^\vee \otimes \HF^*(W,V_{i_r})
\end{equation}
and converges to $H(\scrW(C))$ after finitely many steps. By assumption, $\HF^*(W,V_1)$ is a direct summand of $H(\scrW(C))$, hence its total rank cannot be more than that of \eqref{eq:e1}. An application of Poincar{\'e} duality in Lagrangian Floer theory translates this into the original statement \eqref{eq:ineq}.

\begin{remark}
While \cite{maydanskiy-seidel09} considers only Floer cohomology with coefficients in $\bK = \bF_2$, the part that is relevant here works over an arbitrary coefficient field. Note also that the definition of wrapped Floer cohomology in \cite{maydanskiy-seidel09} is not quite the standard one, being instead adapted to the special case of Lefschetz thimbles. However, the two notions are equivalent, as explained in \cite[Remark 3.1]{maydanskiy-seidel09} (and in fact, a somewhat weaker statement would be sufficient for our purpose). Finally, we should mention that there is an alternative argument, which derives the same conclusion from \eqref{eq:algebraic-hw} by algebraic manipulations.
\end{remark}

\subsection{Algebraic computations in wrapped Floer cohomology}
We next turn to Property \ref{th:iterate}, first concentrating on the purely algebraic aspects, for which we use the language and results of \cite{seidel08}. As before, $\scrB$ is a strictly unital $A_\infty$-category which is quasi-isomorphic to the full $A_\infty$-subcategory of $\Fuk(M)$ with objects $(V_1,\dots,V_m)$. By definition, $\scrB$ is a $\bZ/2$-graded $A_\infty$-category over $\bK$ with $m$ ordered objects. Equivalently, one can view it as an $A_\infty$-algebra over the semisimple ring
\begin{equation} \label{eq:semisimple}
R = \bK e_1 \oplus \cdots \oplus \bK e_m, \;\; e_i^2 = e_i, \;\; e_i e_j = 0 \text{ for $i \neq j$.}
\end{equation}
Then, strict unitality means that $\scrB$ contains a copy of $R$, embedded in a way compatible with the $\bZ/2$-grading and $R$-bimodule structure, and such that
\begin{equation}
\begin{aligned}
& \mu^1_\scrB(e_i) = 0, \\
& \mu^2_\scrB(e_i,x) = (-1)^{|x|} e_ix, \quad \mu^2_\scrB(x,e_i) = xe_i, \\
& \mu^d_\scrB(\cdots,e_i,\cdots) = 0 \text{ for all $d > 2$.}
\end{aligned}
\end{equation}
We also have the directed $A_\infty$-subcategory $\scrA \subset \scrB$, which as an algebra can be written as $\scrA = R \oplus \bigoplus_{i<j} e_j\scrB e_i$ (note that the cohomology of $e_j\scrB e_i$ is $\HF^*(V_i,V_j)$ by definition). Finally, we have the curved $A_\infty$-algebra $\scrD = \scrA \oplus t\scrB[[t]] \subset \scrB[[t]]$ with added curvature term $\mu^0_{\scrD} = t(e_1 + \cdots + e_m)$, as introduced in \cite{seidel06}.

We want to consider the associated categories of $A_\infty$-modules. More precisely, let $\mod(\scrA)$ be the $A_\infty$-category (in fact, differential graded category) of finite-dimensional strictly unital right $\scrA$-modules. This is quasi-equivalent to $\tw(\scrA)$, in a way which sends each object $V_i$ to the projective $A_\infty$-module $e_i\scrA$ \cite[Corollary 5.26]{seidel04}. For $\scrD$, we have an analogous category $\modt(\scrD)$ of finite-dimensional strictly unital torsion $A_\infty$-modules \cite[Section 4]{seidel08}. By definition, an object $\scrN$ of $\modt(\scrD)$ comes with a finite decreasing filtration $F^j\scrN$, which is compatible with its structure maps $\mu^{1|d}_\scrN: \scrN \otimes_R \scrD^{\otimes d} \rightarrow \scrN[1-d]$ in the sense that
\begin{equation}
\mu^{1|d}_{\scrN}(F^j\scrN \otimes t^{k_d}\scrD \otimes \cdots \otimes t^{k_1}\scrD) \subset
F^{j+k_1 + \cdots + k_d}\scrN
\end{equation}
(the definition of morphisms in $\modt(\scrD)$ also has a slight twist \cite[Equation (4.7)]{seidel08}, which takes the $t$-adic topology of $\scrD$ into account). The projection $\scrD \rightarrow \scrD/t\scrB[[t]] = \scrA$ induces a pullback functor $\Pi: \mod(\scrA) \rightarrow \modt(\scrD)$. Given any object $\scrN$ of $\modt(\scrD)$, the successive quotients $F^j\scrN/F^{j+1}\scrN$ lie in the image of $\Pi$ up to isomorphism. Hence, the pullbacks $\Pi(e_i\scrA)$ generate $\modt(\scrD)$.

\begin{lemma} \label{th:factor}
Take $\scrB$ considered as an $\scrA$-module. Then $\Pi(\scrB)$ is zero (by this, we mean that its endomorphism space is acyclic). Conversely, suppose that we have some other $A_\infty$-category $\scrC$, and an $A_\infty$-functor $\Phi: \mod(\scrA) \rightarrow \scrC$ such that $\Phi(\scrB)$ is zero. Then, $\Phi$ factors through $\Pi$, up to quasi-isomorphism of functors.
\end{lemma}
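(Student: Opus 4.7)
The lemma asserts a universal property for the pullback functor $\Pi$: it identifies $\modt(\scrD)$ with the quotient of $\mod(\scrA)$ by the thick $A_\infty$-subcategory generated by $\scrB$. My plan is to treat the two directions in turn, in both cases exploiting the torsion filtration and the explicit curved $A_\infty$-structure of $\scrD$.

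For the first assertion, I would construct a contracting homotopy for the identity in $\End_{\modt(\scrD)}(\Pi(\scrB))$. The pullback $\Pi(\scrB)$ carries the trivial torsion filtration $F^0 = \scrB$, $F^{\geq 1} = 0$, so its structure maps only record the $\scrA$-action, and the curvature $\mu^0_\scrD = t(e_1 + \cdots + e_m)$ acts as zero. The key point is that $\scrB$ also carries its own left multiplication, which lets one define a pre-morphism $h$ of degree $-1$ in the endomorphism complex whose leading $(1|1)$-component sends $(b, t \cdot c) \mapsto b \cdot c$ using the $\scrB$-multiplication. A direct computation with the $A_\infty$-module equations, using that $\mu^0_\scrD$ appears as an insertion in the differential, shows that the boundary of $h$ is the identity; in effect, the curvature of $\scrD$ is trivialized by the enhanced $\scrB$-action on $\scrB$. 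The only subtlety is bookkeeping to verify that $h$ is an admissible morphism in $\modt(\scrD)$ with respect to the $t$-adic topology of \cite[Equation (4.7)]{seidel08}.

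For the converse, I would construct the factoring functor $\tilde{\Phi}: \modt(\scrD) \to \scrC$ by induction along the torsion filtration. Since the pullbacks $\Pi(e_i \scrA)$ generate $\modt(\scrD)$ and every $\scrN \in \modt(\scrD)$ is an iterated extension by its successive quotients $F^j\scrN/F^{j+1}\scrN \cong \Pi(\scrM_j)$ for some $\scrM_j \in \mod(\scrA)$, I would define $\tilde{\Phi}(\scrN)$ as the analogous iterated extension in $\scrC$ of the objects $\Phi(\scrM_j)$. The gluing data comes from the higher $t$-linear components of the structure maps of $\scrN$; these are operations taking values in morphism complexes that factor through $\scrB$-bimodule-type structures, and hence become acyclic after applying $\Phi$ by the hypothesis $\Phi(\scrB) \simeq 0$. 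Acyclicity of these complexes ensures that the gluing data admits a canonical lift up to contractible choice, and by construction $\tilde{\Phi}(\Pi(e_i \scrA)) \simeq \Phi(e_i \scrA)$, yielding the desired quasi-isomorphism $\tilde{\Phi} \circ \Pi \simeq \Phi$.

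The main technical obstacle, as expected, lies in the second part: assembling $\tilde{\Phi}$ as a coherent $A_\infty$-functor with all higher components, rather than just a map on isomorphism classes. I expect this to follow from a standard obstruction-theoretic argument, in which at each inductive stage the obstruction to extending $\tilde{\Phi}$ by one more structural component is a cocycle in a complex assembled from $\Phi$-images of morphism spaces involving $\scrB$; the hypothesis $\Phi(\scrB) \simeq 0$ makes all such obstructions vanish, and the same mechanism yields uniqueness of $\tilde{\Phi}$ up to quasi-isomorphism of functors. By contrast, the contracting-homotopy construction in part (i), while requiring careful signs and filtration bookkeeping, is essentially determined once one notices the role of the element $1_\scrB$.
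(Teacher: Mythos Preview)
Your approach is genuinely different from the paper's, and the second half has a real gap.

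The paper does not construct anything by hand. Instead, it introduces for each $\scrA$-module $\scrM$ the short exact sequence
\[
0 \longrightarrow \scrM \otimes_\scrA \scrA \longrightarrow \scrM \otimes_\scrA \scrB \longrightarrow \scrM \otimes_\scrA (\scrB/\scrA) \longrightarrow 0,
\]
with connecting map $t_\scrM: \scrM \otimes_\scrA (\scrB/\scrA) \to \scrM[1]$. The first assertion is obtained by citing \cite[Theorem~4.1]{seidel08}, which says that $\Pi(t_\scrM)$ is always a quasi-isomorphism; hence $\Pi(\scrM \otimes_\scrA \scrB) \simeq 0$, and one specializes to $\scrM = \scrA$. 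For the converse, the same exact sequence shows that $\Phi(t_\scrM)$ is a quasi-isomorphism whenever $\Phi(\scrM \otimes_\scrA \scrB) \simeq 0$, which follows from the hypothesis $\Phi(\scrB) \simeq 0$ by a filtration argument. Thus $\Phi$ annihilates every $\scrM$ for which $t_\scrM$ is nilpotent. The second half of \cite[Theorem~4.1]{seidel08} identifies $\modt(\scrD)$ with the Drinfeld--Keller quotient of $\mod(\scrA)$ by the subcategory of such nilpotent objects, and the factorization then follows from the universal property of dg/$A_\infty$ quotients \cite{keller99a,drinfeld02,lyubashenko-ovsienko06}. So the entire argument is a reformulation of results already proved in \cite{seidel08}; no contracting homotopy and no obstruction theory appear.

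Your proposed contracting homotopy for part (i) is plausible and, if the signs and the $t$-adic admissibility check go through, would give an alternative and more self-contained proof of that half. But your plan for part (ii) is where the difficulty lies. Building an $A_\infty$-functor $\tilde\Phi$ object-by-object via the torsion filtration does not obviously assemble into a functor: you need coherent maps on all morphism complexes in $\modt(\scrD)$, not just on objects, and the morphism spaces between pulled-back objects are strictly larger than those in $\mod(\scrA)$. Your assertion that the relevant obstruction cocycles ``factor through $\scrB$-bimodule-type structures'' and hence die under $\Phi$ is the crux, and it is not justified; indeed, making this precise is essentially equivalent to proving the quotient description of $\modt(\scrD)$ that the paper simply imports from \cite{seidel08}. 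Without that input (or a genuine replacement for it), the inductive construction of $\tilde\Phi$ as a coherent $A_\infty$-functor remains a sketch rather than a proof.
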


\begin{proof}
This is a reformulation of results from \cite{seidel08}. For any object $\scrM$ of $\mod(\scrA)$ we have a short exact sequence of modules
\begin{equation}
0 \rightarrow \scrM \otimes_\scrA \scrA \longrightarrow \scrM \otimes_\scrA \scrB \longrightarrow \scrM \otimes_\scrA (\scrB/\scrA) \rightarrow 0,
\end{equation}
where the left term is canonically quasi-isomorphic to $\scrM$ itself. The connecting homomorphism is a morphism $t_\scrM: \scrM \otimes_\scrA (\scrB/\scrA) \rightarrow \scrM$ of degree one, unique up to homotopy. By \cite[Theorem 4.1]{seidel08}, the image of $t_\scrM$ under $\Pi$ is a quasi-isomorphism, which implies that $\Pi(\scrM \otimes_\scrA \scrB)$ is zero. Specializing to $\scrM = \scrA$ yields the first part of our statement. Now take some $\Phi$ such that $\Phi(\scrB)$ is zero. The summands $\Phi(e_i \scrB) = \Phi(e_i \scrA \otimes_\scrA \scrB)$ of  $\Phi(\scrB)$  must also map to zero, and from there by a filtration argument, we conclude that $\Phi(\scrM \otimes_\scrA \scrB)$ vanishes for general  $\scrM$. As a consequence, the image of $t_\scrM$ under $\Phi$ is a quasi-isomorphism. From this, we conclude that if $\scrM$ has the property that $t_\scrM$ is nilpotent in the sense of \cite[Section 1]{seidel08}, then $\Phi(\scrM)$ must be zero. Since $\modt(\scrD)$ is quasi-equivalent to the quotient of $\mod(\scrA)$ by the subcategory of $\scrM$ with this nilpotence property \cite[Theorem 4.1]{seidel08}, the desired factorization follows from the general theory of (dg or $A_\infty$) categorical quotients \cite{keller99a,drinfeld02,lyubashenko-ovsienko06}.
\end{proof}

For the next step of our argument, let $\tilde{\scrB} \subset \scrB$ be the full $A_\infty$-subcategory containing only the last $m-1$ out of our $m$ original objects. This can also be thought of as an $A_\infty$-algebra over $\tilde{R} = \bK e_2 \oplus \cdots \oplus \bK e_m \subset R$. Analogously, we have $\tilde{\scrA}$ and $\tilde{\scrD}$, together with the pullback functor $\tilde{\Pi}: \mod(\tilde{\scrA}) \rightarrow \modt(\tilde{\scrD})$.

\begin{lemma} \label{th:algebraic-induction}
Suppose that both $\Pi(e_1\scrA)$ and $\tilde{\Pi}(\tilde{\scrA})$ are zero. Then, $\Pi(\scrA)$ is also zero.
\end{lemma}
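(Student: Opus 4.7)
The plan is to decompose $\scrA$ as a right $\scrA$-module into summands $e_1\scrA$ and $e_i\scrA$ for $i \geq 2$, and apply the universal property encoded by Lemma \ref{th:factor} iteratively. Since $\Pi$ is a dg functor and preserves direct sums, it suffices to show $\Pi(e_i\scrA) = 0$ for every $i$; the case $i = 1$ is the first hypothesis, so I focus on $i \geq 2$.

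For each such $i$, directedness of $\scrA$ ensures that the component $e_i\scrA e_1$ sits inside $e_i\scrA$ as an $\scrA$-submodule supported at the vertex $V_1$: morphisms out of $V_1$ in $\scrA$ are scalar multiples of $e_1$, so the right $\scrA$-action preserves the $V_1$-component. This submodule is canonically isomorphic to $e_1\scrA \otimes_\bK (e_i\scrA e_1)$, a finite sum of copies of $e_1\scrA$, hence annihilated by $\Pi$. The quotient is the extension-by-zero $E(e_i\tilde{\scrA})$ of the $\tilde{\scrA}$-module $e_i\tilde{\scrA}$, where $E : \mod(\tilde{\scrA}) \to \mod(\scrA)$ is the dg functor that sets the $V_1$-value to zero (directedness again guarantees that every $A_\infty$-structure map entering $V_1$ lands in the zero component, so this construction is consistent). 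Applying $\Pi$ to the short exact sequence
\begin{equation*}
0 \to e_1\scrA \otimes_\bK (e_i\scrA e_1) \to e_i\scrA \to E(e_i\tilde{\scrA}) \to 0
\end{equation*}
reduces the problem to proving $\Pi(E(e_i\tilde{\scrA})) = 0$ for each $i \geq 2$.

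To bring in the second hypothesis, I consider the composition $\Phi := \Pi \circ E : \mod(\tilde{\scrA}) \to \modt(\scrD)$. By the tilde analog of Lemma \ref{th:factor}, once I verify $\Phi(\tilde{\scrB}) = 0$, the functor $\Phi$ factors up to quasi-isomorphism through $\tilde{\Pi}$ and so annihilates every $\tilde{\scrA}$-module in the kernel of $\tilde{\Pi}$. In particular, since $\tilde{\Pi}(\tilde{\scrA}) = 0$ yields $\tilde{\Pi}(e_i\tilde{\scrA}) = 0$ (each $e_i\tilde{\scrA}$ is a summand of $\tilde{\scrA}$), we obtain $\Pi(E(e_i\tilde{\scrA})) = \Phi(e_i\tilde{\scrA}) = 0$, which combined with the previous step finishes the argument.

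The remaining and principal task is verifying $\Pi(E(\tilde{\scrB})) = 0$. For this I work with the $\scrA$-module $e_{\geq 2}\scrB$, where $e_{\geq 2} = e_2 + \cdots + e_m$: it is a direct summand of $\scrB$, so Lemma \ref{th:factor} gives $\Pi(e_{\geq 2}\scrB) = 0$. Its $V_1$-component $e_{\geq 2}\scrB e_1$ is once more an $\scrA$-submodule supported at $V_1$, annihilated by $\Pi$ via the same $e_1\scrA$-tensoring argument as above. A direct calculation identifies the quotient $e_{\geq 2}\scrB / (e_{\geq 2}\scrB e_1)$ with $E(\tilde{\scrB})$: passing to the quotient precisely kills all $\scrA$-actions involving $V_1$, matching the extension-by-zero. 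The two-out-of-three principle applied to the resulting exact triangle then yields $\Pi(E(\tilde{\scrB})) = 0$. The main technical delicacy throughout is confirming that the claimed short exact sequences of $\scrA$-modules are genuine sequences of $A_\infty$-modules: this rests on the directedness of $\scrA$ forcing every higher operation that would mix a $V_1$-value with non-$V_1$ morphisms either into the $V_1$-part being quotiented out, or, symmetrically, to be set to zero on the extension side.
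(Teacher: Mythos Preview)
Your proof is correct and follows essentially the same route as the paper's own argument: both use the short exact sequence $0 \to \scrM e_1 \to \scrM \to \scrM/\scrM e_1 \to 0$ (applied to $\scrM = e_i\scrB$ and $\scrM = e_i\scrA$), identify the quotient with the extension-by-zero of a $\tilde{\scrA}$-module, and then invoke Lemma \ref{th:factor} for the smaller algebra to obtain the factorization through $\tilde{\Pi}$. The only cosmetic differences are that you name the extension functor $E$ explicitly and treat all $i \geq 2$ at once via $e_{\geq 2}\scrB$, whereas the paper handles each $e_i\scrB$ separately; and one small imprecision: $e_i\scrA e_1$ carries the differential $\mu^1$, so it is only \emph{quasi}-isomorphic to a sum of shifted copies of $e_1\scrA$, not literally isomorphic to $e_1\scrA \otimes_\bK (e_i\scrA e_1)$ as a sum of copies---but this does not affect the conclusion that $\Pi$ annihilates it.
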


\begin{proof}
Because of directedness, any module $\scrM$ over $\scrA$ sits in a canonical short exact sequence
\begin{equation}
0 \rightarrow \scrM e_1 \longrightarrow \scrM \longrightarrow \scrM / \scrM e_1 \rightarrow 0.
\end{equation}
Here, the submodule $\scrM e_1$ is quasi-isomorphic to a direct sum of shifted copies of the simple module $e_1 \scrA = e_1 \scrA e_1 = \bK e_1$. Modules with $\scrM e_1 = 0$ are the same as modules over the smaller algebra $\tilde\scrA$ (in more abstract terms, this yields a cohomologically fully faithful embedding $\mod(\tilde\scrA) \rightarrow \mod(\scrA)$, for which the projection $\scrM \mapsto \scrM / \scrM e_1$ is a right adjoint).

First take $\scrM = e_i\scrB$ for some $i>1$. We know from Lemma \ref{th:factor} that $\Pi(e_i\scrB)$ is zero, and so is $\Pi(e_i \scrB e_1)$ by assumption. It follows that $\Pi$ sends the quotient $e_i\scrB / e_i \scrB e_1$ to zero, but that quotient can be identified with $e_i\tilde\scrB$. As a consequence of this and Lemma \ref{th:factor} applied to the smaller algebra, we have a factorization up to quasi-isomorphism of functors
\begin{equation} \label{eq:composite-wrap}
\xymatrix{
\mod(\tilde{\scrA}) \ar[r] \ar[d]_-{\tilde{\Pi}} & \mod(\scrA) \ar[d]^-{\Pi} \\
\modt(\tilde{\scrD}) \ar@{-->}[r] & \modt(\scrD).
}
\end{equation}
Now consider $\scrM = e_i \scrA$ for some $i>1$. Again by assumption, $\Pi(e_i \scrA e_1)$ is zero. By the factorization established above, $\Pi(e_i \scrA / e_i \scrA e_1) = \Pi(e_i \tilde{\scrA})$ is the image of $\tilde{\Pi}(e_i \tilde{\scrA})$ under some $A_\infty$-functor, hence zero by the other part of our assumption. These two facts imply that $\Pi(e_i \scrA)$ is zero, as claimed. \end{proof}

There is a conjectural formula for wrapped Floer cohomology of Lefschetz thimbles, stated in \cite{seidel08} and proved in \cite[Appendix]{bourgeois-ekholm-eliashberg09}. This says that for any $1 \leq i \leq m$,
\begin{equation} \label{eq:algebraic-hw}
\HW^*(\Delta_i,\Delta_i) \iso H^*(\hom_{\modt(\scrD)}(\Pi(e_i\scrA), \Pi(e_i\scrA))).
\end{equation}
Hence, if $\HW^*(\Delta_1,\Delta_1) = 0$, then $\Pi(e_1\scrA)$ is zero. Of course, the corresponding statement holds for each thimble $\tilde{\Delta}_i$ as well. Hence, if $\HW^*(\tilde{\Delta}_i,\tilde{\Delta}_i) = 0$, then $\tilde{\Pi}(e_i\tilde{\scrA})$ is zero, and if this holds for all $2 \leq i \leq m$, then $\tilde{\Pi}(\tilde{\scrA})$ is zero. Under these assumptions, Lemma \ref{th:algebraic-induction} implies that $\HW^*(\Delta_i,\Delta_i) = 0$ for all $i$, which completes the proof of Property \ref{th:iterate}.

The proof of Property \ref{th:meta} relies on the same technology. Suppose that we have two different geometric situations, with the same number of vanishing cycles, and in which the associated $A_\infty$-algebras $\scrB$ are quasi-isomorphic over $R$. Without loss of generality, we may assume that the quasi-isomorphism is strictly unital \cite[Th\'eor\`eme 3.2.2.1]{lefevre}. Then, it automatically induces a quasi-isomorphism between the directed subalgebras $\scrA$, and a filtered quasi-isomorphism between the curved $A_\infty$-algebras $\scrD$. These in turn induce quasi-equivalences of the associated categories $\modt(\scrD)$, which are compatible with the pullback maps $\Pi$. In view of \eqref{eq:algebraic-hw}, this proves the desired statement.

\begin{remark}
In a paper in preparation, the authors will give a proof of \eqref{eq:algebraic-hw} which is independent of \cite{bourgeois-ekholm-eliashberg09}, under the additional assumption that the coefficient field $\bK$ is of characteristic $\neq 2$ (this technical restriction is inherited from \cite{seidel04}). Relying on this alternative approach would require that we exclude characteristic $2$ from Theorems \ref{th:1}, \ref{th:2} and \ref{th:3}. The applications in Corollaries \ref{th:1.5}, \ref{th:2.5} and \ref{th:3.5} would still be valid as stated.
\end{remark}

\subsection{The open-closed string relationship\label{subsec:bee}}
The last remaining fundamental fact stated in Section \ref{subsec:lefschetz} is Property \ref{th:open-closed}. One direction is easy and uses only general TQFT type operations, see for instance \cite[Theorem 56]{ritter10}: $\SH^*(E)$ is a unital ring, and the wrapped Floer cohomology of any exact Lagrangian submanifold with Legendrian boundary (or more generally, any pair of such submanifolds) is a module over that ring. Hence, if $\SH^*(E)$ vanishes, so do all the wrapped Floer cohomology groups.

The converse is a consequence of the surgery formula for symplectic cohomology from \cite{bourgeois-ekholm-eliashberg09}. A quick way of deriving this would be as follows. Conjectural algebraic formulae for $\SH^*(E)$ and $\HW^*(\Delta_i,\Delta_i)$ were given in \cite{seidel06,seidel08}. In this framework, it follows from \cite[Lemma 5.2]{seidel08} that if the algebraic counterpart of $\HW^*(\Delta_i,\Delta_i)$ vanishes for all $i$, then so does that of $\SH^*(E)$. Finally, these conjectures were proved in \cite[Appendix]{bourgeois-ekholm-eliashberg09}. However, this strategy is a bit roundabout, and involves more Lefschetz fibration theory than is strictly necessary. We will therefore also give a more direct argument, which stays close to \cite[Section 6.2]{bourgeois-ekholm-eliashberg09}.

For that, let us start with algebraic preliminaries. Let $R$ be the semisimple ring from \eqref{eq:semisimple}. Given any $\bZ/2$-graded $R$-bimodule $C$, one can form the associated tensor algebra $T(C)$ over $R$. More concretely, $C$ is the direct sum of $\bZ/2$-graded $\bK$-vector spaces $e_j C e_i$. If one thinks of basis elements of those vector spaces as letters, then $T(C)$ consists of composable words:
\begin{equation}
\begin{aligned}
T(C) & = R \oplus C \oplus C \otimes_R C \oplus \cdots \\ & = R \oplus \bigoplus_{i,j} e_j C e_i \oplus
\bigoplus_{i,j,k} e_k C e_j \otimes_\bK e_j C e_i \oplus \cdots
\end{aligned}
\end{equation}
Suppose that we are given a differential $\delta$ which makes $T(C)$ into a unital differential graded algebra over $R$. Such a differential is uniquely determined by its behaviour on generators, which is an $R$-bimodule map $C \rightarrow T(C)$. In particular, we can then consider the diagonal part
\begin{equation} \label{eq:diagonal-part}
T(C)^{\mathrm{diag}} = \bigoplus_i e_i T(C) e_i = R \oplus \bigoplus_i e_i C e_i \oplus \bigoplus_{i,j} e_i C e_j \otimes_{\bK} e_j C e_i \oplus \cdots,
\end{equation}
which is a direct summand of $T(C)$ as a chain complex. Next, take $\Omega(C) = (C[1] \otimes_R T(C))^{\mathrm{diag}}$, where the diagonal part is defined as before. There is an induced differential $\delta^{\mathrm{cycl}}$ on this, well-known in the algebra literature (see \cite{quillen88} or \cite[Section 7.2]{ks}). To define that, it is convenient to think of the generators of $\Omega(C)$ as formal noncommutative one-forms $c_1\, \cdots c_{i-1} \, dc_i \, c_{i+1} \cdots c_r$, with the rule that any two such expressions which are related by a cyclic permutation are the same (up to the standard Koszul signs). Then, $\delta^{\mathrm{cycl}}$ is formally identical to the standard way of applying the Lie derivative of a vector field to a one-form. One can form a total complex
\begin{equation} \label{eq:total-complex}
\left(T(C)^{\mathrm{diag}} \oplus \Omega(C), \begin{pmatrix} \delta & S \\ 0 & \delta^{\mathrm{cycl}} \end{pmatrix} \right),
\end{equation}
where $S(dc_1\, c_2 \cdots c_r) = c_1 c_2 \cdots c_r - (-1)^{|c_1|(|c_2| + \cdots + |c_r|)} c_2 \cdots c_r c_1$. Next, suppose that $C$ itself comes with an exhausting increasing filtration $0 = F_0C \subset F_1C \subset \cdots$, such that $\delta$ is strictly decreasing with respect to the induced filtration of $T(C)$.

\begin{lemma} \label{th:filter}
If $T(C)^{\mathrm{diag}}$ is acyclic, then so is \eqref{eq:total-complex}.
\end{lemma}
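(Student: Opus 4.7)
The plan is to exploit the upper-triangular form of the differential in \eqref{eq:total-complex}, which yields a short exact sequence of chain complexes
\begin{equation*}
0 \longrightarrow (T(C)^{\mathrm{diag}},\delta) \longrightarrow \mathrm{Total} \longrightarrow (\Omega(C),\delta^{\mathrm{cycl}}) \longrightarrow 0.
\end{equation*}
The associated long exact sequence, combined with the hypothesis that $T(C)^{\mathrm{diag}}$ is acyclic, reduces the claim to proving that $(\Omega(C),\delta^{\mathrm{cycl}})$ is itself acyclic.

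Acyclicity of $T(C)^{\mathrm{diag}}$ propagates to all of $(T(C),\delta)$ by a direct argument. Because $\delta|_R = 0$, each idempotent $e_i \in R \subset T(C)^{\mathrm{diag}}$ is a cycle, so by acyclicity there exists $b_i \in e_iT(C)e_i$ with $\delta b_i = e_i$; note that $|b_i|$ is odd in the $\bZ/2$-grading. The derivation property of $\delta$ then shows that the operator sending $x \in e_jT(C)e_i$ to $b_jx$ is a chain contraction of the full complex $(T(C),\delta)$, with the $\bZ/2$-signs lining up precisely because $|b_j|$ is odd.

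To pass from $T(C)$ to $\Omega(C) = (C[1] \otimes_R T(C))^{\mathrm{diag}}$, the natural candidate is to insert $b_i$ into a cyclic one-form just to the left of the $d$-slot. A computation using the Leibniz form of $\delta^{\mathrm{cycl}}$ shows that this operator satisfies the contracting homotopy identity on any fixed representative: $\delta^{\mathrm{cycl}}(b \cdot \omega) = (\delta b)\omega - b \cdot \delta^{\mathrm{cycl}}\omega = \omega - b \cdot \delta^{\mathrm{cycl}}\omega$. The trouble is that it does not descend to the cyclic quotient: different cyclic representatives of a single element of $\Omega(C)$ require different $b_i$'s inserted in incompatible positions, since $b_{a_0}$ is not the conjugate of $b_{a_{j-1}}$ by the intervening letters. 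This is where the filtration enters. The filtration $F_\bullet C$ induces an exhausting filtration on $\Omega(C)$, and the strict-decrease property of $\delta$ implies that the cyclic-symmetrization errors of the naive operator all lie in strictly lower filtration degree. An induction on filtration level, with $F_0C = 0$ as base case, upgrades the naive operator into a genuine contracting homotopy of $\Omega(C)$; exhaustiveness of $F_\bullet C$ guarantees that the construction converges.

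The main obstacle is the cyclic-symmetrization defect that appears when one tries to transfer the contraction from $T(C)$ to the cyclic quotient $\Omega(C)$. The filtration hypothesis is tailor-made for precisely this defect: without it, the correction terms would accumulate uncontrollably, while with it they always lie in strictly lower filtration and can be absorbed into the next inductive step.
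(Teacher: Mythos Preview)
Your overall architecture matches the paper's: quotient out the acyclic subcomplex $T(C)^{\mathrm{diag}}$, deduce that every $e_jT(C)e_i$ is acyclic from the algebra structure, and then use the filtration on $C$ to conclude that $(\Omega(C),\delta^{\mathrm{cycl}})$ is acyclic. The paper does exactly this, only more directly.

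Where your write-up goes astray is in the diagnosis of what fails for the naive homotopy $H$. The space $\Omega(C)=(C[1]\otimes_R T(C))^{\mathrm{diag}}$ is \emph{not} a quotient in which a single element has several competing representatives with the $d$ in different slots; the $d$-slot is canonical (one simply rotates the word so that $d$ sits in front). Hence your operator ``insert $b_i$ immediately to the left of $d$'' is perfectly well-defined on $\Omega(C)$; there is no descent problem. The genuine obstruction is elsewhere: when you compute $H\delta^{\mathrm{cycl}}(dc\otimes t)$, the terms coming from $d(\delta c)\cdot t$ have their $d$-slot occupied by some factor $c^{(l)}$ of $\delta c$, whose left idempotent $a_{l-1}$ need not equal $i$, so $H$ inserts $b_{a_{l-1}}$ rather than $b_i$. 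Thus $\delta^{\mathrm{cycl}}H+H\delta^{\mathrm{cycl}}-\mathrm{id}$ is not zero, but consists entirely of such $d(\delta c)$-type terms. Since $\delta$ is strictly filtration-decreasing, each $c^{(l)}$ lies in $F_{p-1}C$ when $c\in F_pC$, so this error lands in $F_{p-1}\Omega(C)$. That is the fact your filtration argument actually needs.

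Once the obstruction is correctly identified, your conclusion is simply a rephrasing of the paper's argument. The paper filters $\Omega(C)$ by $F_p\Omega(C)=(F_pC[1]\otimes T(C))^{\mathrm{diag}}$ and observes that on the associated graded $\delta^{\mathrm{cycl}}$ reduces to $\mathrm{id}\otimes\delta$; acyclicity of $e_iT(C)e_j$ then kills each graded piece, hence the whole of $\Omega(C)$. Your statement that the defect of $H$ lies in strictly lower filtration is equivalent to saying that $H$ is a contracting homotopy on the associated graded, which is just a hands-on way of exhibiting the same acyclicity. The talk of ``upgrading $H$ to a genuine contracting homotopy by induction'' is unnecessary extra work: acyclicity of each $F_p/F_{p-1}$ together with $F_0=0$ and exhaustiveness already forces acyclicity of $\Omega(C)$.
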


\begin{proof}
Clearly, we can quotient out by $T(C)^{\mathrm{diag}}$, hence only need to show that $\Omega(C)$ is acyclic.
Equip $\Omega(C)$ with the filtration $F_p \Omega(C) = (F_p C[1] \otimes T(C))^{\mathrm{diag}}$, which is compatible with $\delta^{\mathrm{cycl}}$ by definition. Then, each associated graded space is
\begin{equation} \label{eq:quo}
F_p \Omega(C)/F_{p-1}\Omega(C) = \bigoplus_{i,j} e_j F_p C[1]/F_{p-1} C[1] e_i \otimes_{\bK} e_i T(C) e_j,
\end{equation}
with a differential which is $\mathrm{id} \otimes \delta$. Because of the algebra structure of $T(C)$, acyclicity of its diagonal part implies the acyclicity of the whole, hence of $e_i T(C) e_j$ for any $(i,j)$. Hence, all quotients \eqref{eq:quo} are acyclic, and so is $\Omega(C)$ itself.
\end{proof}

To explain the geometric relevance of this, we have to consider the following instance of Weinstein handle attachment. Suppose that we have a Liouville domain $E^0$ with $\SH^*(E^0) = 0$, as well as a collection of $m$ disjoint Legendrian spheres $(K_1^0,\dots,K_m^0)$ in $\partial E^0$. Let $E^1$ be the Liouville domain obtained by attaching a Weinstein handle to each sphere. This contains canonical Lagrangian discs with Legendrian boundary, the co-cores $(W^1_1,\dots,W^1_m)$ (see Section \ref{subsec:weinstein} below, and references there, for a more detailed discussion). Assuming suitably generic properties of the Reeb flow, let $e_j C e_i$ be the space freely generated by Reeb chords joining $K^0_i$ to $K^0_j$; any choice of orientation on the spheres determines a $\bZ/2$-grading by intersection number. Then, the differential graded algebra $(T(C),\delta)$ defined above is $LHA(K^0)$ in the notation from \cite{bourgeois-ekholm-eliashberg09}, where $K^0 = K^0_1 \cup \cdots \cup K^0_m$; while \eqref{eq:total-complex} is $LH^{\mathit{Ho}}(K^0)$. The cohomology of the latter complex is $\SH^*(E^1)$ by \cite[Corollary 5.7]{bourgeois-ekholm-eliashberg09}. There is also an open string analogue of the statement, which is a minor variation on \cite[Theorem 5.8]{bourgeois-ekholm-eliashberg09}, and which says that the cohomology of $e_i T(C) e_i$ is the wrapped Floer cohomology $\HW^*(W^1_i,W^1_i)$ in $E^1$. Finally, $C$ carries a filtration by action, which is indexed by reals but only jumps at a discrete set of values. The argument from Lemma \ref{th:filter} still goes through in this slightly modified algebraic framework. The conclusion is that the vanishing of $\HW^*(W^1_i,W^1_i)$ for all $i$ implies that of $\SH^*(E^1)$ (compare \cite[Section 6.2]{bourgeois-ekholm-eliashberg09}, which discusses the case of a single handle attachment). To apply this to our situation, we need to recall that forming the total space of a Lefschetz fibration can be thought of as a special case of handle attachment, where $E^0$ is a version of $D^2 \times M$ with the corners rounded off, the $K^0_i$ arise from the vanishing cycles, $E^1 = E$, and $W^1_i = \Delta_i$ are the Lefschetz thimbles (see Remark \ref{th:lefschetz-handles}, or \cite[Section 7]{bourgeois-ekholm-eliashberg09} for a more detailed discussion). Since the symplectic cohomology of $E^0$ vanishes by \cite{oancea04}, Property \ref{th:open-closed} follows.

\begin{remark}
In forthcoming work, the authors will give a proof of Property \ref{th:open-closed} by a ``decomposition of the diagonal'' argument, which is independent of \cite{bourgeois-ekholm-eliashberg09}.
\end{remark}

\subsection{A digression\label{subsec:poly}}
We need to explain a technical point which was tacitly used in our proof of Property \ref{th:open-closed}. Namely, even though all statements in \cite{bourgeois-ekholm-eliashberg09} are originally formulated for $\bK = \bQ$, the specific parts we require work for arbitrary $\bK$. This is not really surprising (compare for instance \cite{ekholm08}, which introduced rational Symplectic Field Theory for Lagrangian submanifolds with coefficients in $\bK = \bF_2$), and only requires careful inspection of the moduli spaces involved. As a representative sample, we consider the definition of reduced symplectic cohomology $\SH^*_+(M)$ in an SFT framework, following \cite[Sections 2 and 3]{bourgeois-ekholm-eliashberg09} (see also \cite{bourgeois-oancea09}). The other moduli spaces relevant for our purpose (the ones underlying the construction of full symplectic cohomology, Legendrian contact homology, and the isomorphism in \cite[Corollary 5.7]{bourgeois-ekholm-eliashberg09}) can be treated analogously.

Fix a finite type Liouville manifold $M$. Let $N$ be its boundary at infinity, which in the terminology of Section \ref{subsec:def} means a large level set $h^{-1}(c)$, $c \gg 0$, with its contact one-form. At infinity, $M$ is isomorphic to the positive half $\bR^+ \times N$ of the symplectization of $N$. Equip the symplectization with an almost complex structure in the standard translation-invariant way (see for instance \cite[Section 1.4]{sft-intro}). Then, equip $M$ with an almost complex structure which, at infinity, agrees with the one on the symplectization.

Without essential loss of generality, we may assume that all periodic orbits of the Reeb flow on $N$ are transversally nondegenerate. Given two such orbits (possibly multiply covered, but unparametrized) $\gamma_+$ and $\gamma_-$, we consider the moduli space $\scrMan(\gamma_-,\gamma_+)$ of {\em anchored holomorphic cylinders}, following \cite[Section 2.3]{bourgeois-ekholm-eliashberg09}. Points of these moduli spaces are represented by the following kind of structure:
\begin{itemize}
\item
Take a nodal genus zero Riemann surface $\bar{C}$ with two (smooth and distinct) marked points $\bar{z}_{0,+},\bar{z}_{0,-}$. We denote by $C$ the normalization of $\bar{C}$. By definition, special points of $C$ are the preimages of our marked points, denoted by $z_{0,\pm}$, and preimages of the nodes, denoted by $z_{i,\pm}$ for $i = 1,\dots,k$ (the labeling rule is that, of the two preimages of any given node, $z_{i,+}$ is the one lying on the component that is combinatorially further away from $z_{0,+}$; see Figure \ref{fig:curve1}). The complement of the special points is written as $C^{\circ} \subset C$. Additionally, for each $z_{i,+}$, $0 \leq i \leq k$, we want to have an asymptotic marker \cite[Section 1.5]{sft-intro}, which means an isomorphism $TC_{z_{i,+}} \iso \bC$, unique up to multiplication with $\bR^+$.
\end{itemize}
\begin{figure}
\begin{center}
\epsfig{file=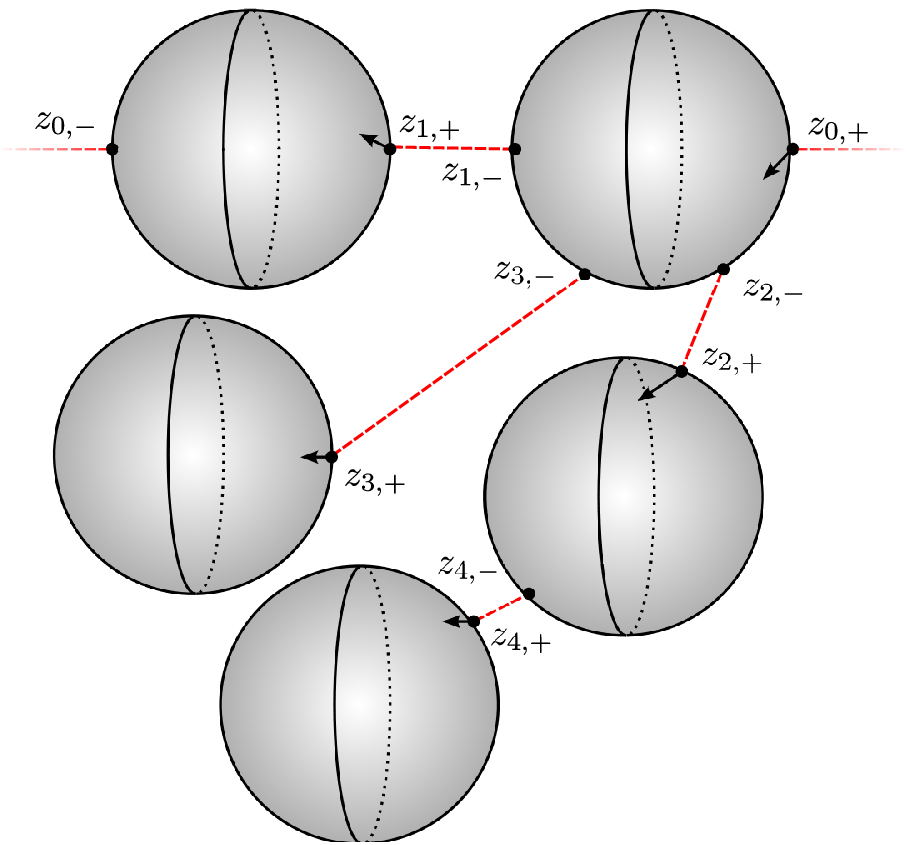}
\caption{\label{fig:curve1}}
\end{center}
\end{figure}
Note that if we have a smooth genus zero Riemann surface with two marked points, that surface can be identified with the Riemann sphere by sending one point to $0$ and the other to $\infty$; in particular an asymptotic marker at one point determines one at the other. In our situation, we are given an asymptotic marker at exactly one point of each connected component of $C$, and that therefore induces markers at the other points as well.
\begin{itemize} \itemsep1em
\item
To each marked point $z_{i,\pm}$ we associate a Reeb orbit $\gamma_{i,\pm}$, as follows. $\gamma_{0,\pm} = \gamma_{\pm}$ are the given orbits. For $i>0$, we require that $\gamma_{i,+} = \gamma_{i,-}$.
\end{itemize}
At this point, still following \cite[Section 2.3]{bourgeois-ekholm-eliashberg09}, a small but significant difference from the classical SFT setup occurs. Namely, we do not choose preferred points on any of the Reeb chords.
\begin{itemize}
\item
Denote by $\bar{M}$ the disjoint union of $M$ and infinitely many copies of $\bR \times N$.
We then want to have a map $u: C^{\circ} \rightarrow \bar{M}$ which is a pseudo-holomorphic building in the sense of \cite[Section 9]{sft-compactness}, with asymptotic limits $\gamma_{i,\pm}$ at the points $z_{i,\pm}$ (Figure \ref{fig:building}). For $i>0$, we require that the parametrizations of the orbits $\gamma_{i,\pm}$ inherited from our asymptotic markers should coincide.
\end{itemize}
As usual, there is a stability condition, which places some restrictions on trivial cylinders. Two holomorphic buildings define the same point of $\scrMan(\gamma_-,\gamma_+)$ if they are identified by a combination of the following two relations. The first one is isomorphism of the domains, compatible with the asymptotic markers. The second is translation on each $\bR \times N$ component of the target space. As a consequence, the moduli space comes with evaluation maps
\begin{equation} \label{eq:ev}
\scrMan(\gamma_-,\gamma_+) \longrightarrow \underline{\gamma}_- \times \underline{\gamma}_+,
\end{equation}
where $\underline{\gamma}_\pm$ are the geometric images (which are copies of the underlying simple orbits) of $\gamma_\pm$.
\begin{figure}
\begin{center}
\epsfig{file=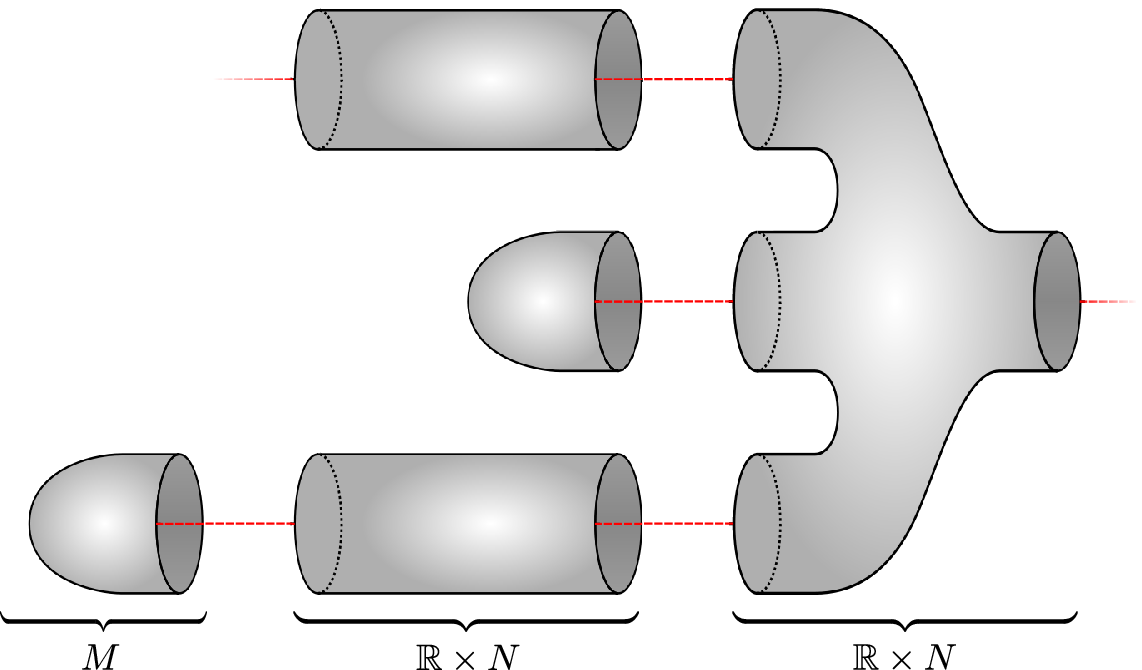}
\caption{\label{fig:building}}
\end{center}
\end{figure}
As part of the general definition, each point of the moduli space also carries an automorphism (or isotropy) group. Stability ensures that this is finite, but in our case it is actually always trivial. This is because, given a sphere with a marked point and an asymptotic marker at that point, there are no nontrivial finite order automorphisms which preserve that marker. Pre-gluing of different components of our pseudoholomorphic maps must be compatible with the asymptotic markers by definition, hence there is no ambiguity in gluing the ends together. These two facts together ensure that $\scrMan(\gamma_-,\gamma_+)$ can be given the structure of an M-polyfold, in the sense of \cite{poly}. Note that in this framework, the ``boundary strata'' of our moduli spaces are fibre products rather than products, meaning that for any $\gamma$ we have a canonical embedding
\begin{equation}
\scrMan(\gamma_-,\gamma) \times_{\underline{\gamma}} \scrMan(\gamma,\gamma_+) \hookrightarrow
\scrMan(\gamma_-,\gamma_+)
\end{equation}
where the fibre product is taken with respect to \eqref{eq:ev}. This is formally the situation of any Morse-Bott type theory. By taking suitable preimages under the evaluation maps, one can define subspaces of our moduli spaces, and (applying the standard machinery) obtain point-counting numbers which can be used to define a chain complex $\mathit{SC}^*_+(M)$, with two generators for each Reeb orbit (including ``bad orbits''), a process which is described in detail in \cite[Section 3.2]{bourgeois-ekholm-eliashberg09}. In the M-polyfold context, these numbers are integers, hence the chain complex can be defined with coefficients in any field (or indeed, abelian group). The cohomology of this complex is then $\mathit{SH}^*_+(M)$.

\subsection{Almost symplectic geometry\label{subsec:almost}}
This is an elementary postscript to our discussion of Picard-Lefschetz theory, in which we recall its homotopy theoretic analogue. First, an {\em almost symplectic manifold} is a manifold $M^{2n}$ together with a two-form $\omega_M$ which is pointwise nondegenerate, but not necessarily closed (this is the same as an $\mathit{Sp}(2n,\bR)$ structure on the abstract vector bundle $TM \rightarrow M$).

An {\em almost symplectomorphism} between two such manifolds is a diffeomorphism $\phi: M \rightarrow \tilde{M}$ together with a one-parameter family $(\omega_t)_{0 \leq t \leq 1}$ of almost symplectic structures, which interpolate between $\omega_0 = \omega_M$ and $\omega_1 = \phi^*\omega_{\tilde{M}}$. Similarly, an {\em almost Lagrangian submanifold} of an almost symplectic manifold $M$ is an $n$-dimensional submanifold $L$ together with a one-parameter family $(\omega_t)$ as before, such that $\omega_0 = \omega_M$ and $\omega_1|L = 0$ (equivalently up to homotopy, one could describe this structure by giving an $n$-dimensional subbundle of $TM|L \rightarrow L \times [0,1]$ which equals $TL$ over $L \times \{0\}$ and is $\omega_M$-isotropic over $L \times \{1\}$). Note that both notions are isotopy invariant. Namely, if $\phi$ is an almost symplectomorphism and $\tilde{\phi}$ is another diffeomorphism isotopic to $\phi$, the isotopy equips $\tilde{\phi}$ with an almost symplectomorphism structure. Similarly, if $L$ is almost Lagrangian and $\tilde{L}$ is isotopic to it, the isotopy equips $\tilde{L}$ with an almost Lagrangian structure (one can see this, for instance, by embedding the isotopy into one of diffeomorphisms).

Suppose that $(M,\omega_M)$ is almost symplectic, and $L \subset M$ an almost Lagrangian sphere (which should come with a diffeomorphism $S^n \rightarrow L$, unique up to the same ambiguities as in Section \ref{subsec:lefschetz}). One can then define the Dehn twist $\tau_L$ as an almost symplectomorphism of $M$, unique up to isotopy in an appropriate sense. We should recall that for $n = 2$, the square $\tau_L^2$ is isotopic to the identity through almost symplectomorphisms \cite{seidel98b}. Suppose that we are given an ordered family $(V_1,\dots,V_m)$ of almost Lagrangian spheres. One can use those as vanishing cycles and construct a $(2n+2)$-dimensional almost symplectic manifold $E$. The almost symplectic isomorphism type of $E$ depends only on the isotopy class of the vanishing cycles as almost Lagrangian spheres. Of course, in the actual symplectic case all these notions specialize to the standard ones.

\section{Altering Lefschetz fibrations}

\subsection{Weinstein handle attachment\label{subsec:weinstein}}
As announced before, we need to consider the construction of Weinstein handles \cite{weinstein91} of critical dimension in a little more detail. Let $M^0$ be a Liouville domain of dimension $2n \geq 4$, and $K^0 \subset \partial M^0$ a Legendrian sphere. Attaching a Weinstein handle to $K^0$ yields a larger Liouville domain $M^1 \supset M^0$. The new boundary $\partial M^1$ always contains another Legendrian sphere $K^1$, which is in fact the boundary of a Lagrangian disc $W^1 \subset M^1 \setminus M^0$, the co-core of the handle. Of particular interest for us is the case when the original Legendrian sphere $K^0$ already comes as the boundary of a Lagrangian disc $W^0 \subset M^0$. Then, $W^0$ and the core of the handle can be glued together to form a Lagrangian homotopy sphere $L^1 \subset M^1$, which intersects $W^1$ transversely in a single point (see Figure \ref{fig:handle} for a schematic description).
\begin{figure}
\begin{center}
\epsfig{file=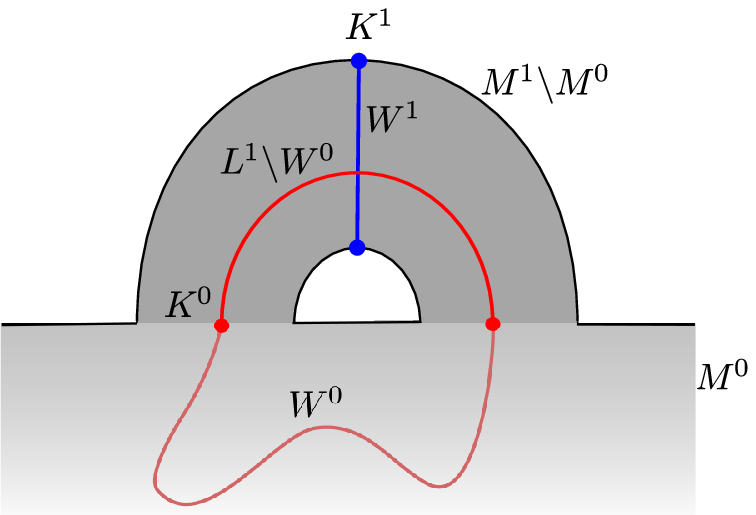}
\caption{\label{fig:handle}}
\end{center}
\end{figure}

In the simple form just described, the handle attachment process is not quite unique. To fix this, we want our attaching sphere to come with a diffeomorphism $S^{n-1} \rightarrow K^0$, unique up to isotopy and composition with elements of $O(n)$, just as for vanishing cycles. This diffeomorphism determines the handle attachment up to deformation within the class of Liouville domains. Note that the co-core disc $W^1$ always comes with a diffeomorphism $D^n \rightarrow W^1$, unique up to the same ambiguity as before (the additional information contained in this is trivial except possibly in dimensions $n = 4,5$, since $\Diff^+(D^n)$ is connected for all other $n$ \cite{smale59, cerf68, cerf70}). By restriction, we get a diffeomorphism $S^{n-1} \rightarrow K^1$ (this allows one to iterate the handle attachment process in a natural way). In the case where $K^0 = \partial W^0$, we can similarly assume that our given diffeomorphism $S^{n-1} \rightarrow K^0$ is the restriction of a diffeomorphism $D^n \rightarrow W^0$. That choice ensures that the resulting $L^1 \subset M^1$ is an actual differentiable sphere. In fact, it will then come with a diffeomorphism $S^n \rightarrow L^1$, unique in the same sense as before. From now on, whenever we discuss handle attachment, these additional diffeomorphism data are assumed to be present.

Weinstein handles appear in the context of stabilization of open book decompositions \cite{giroux02}, and also in the related argument for Lefschetz fibrations. Suppose as before that $M^0$ contains a Lagrangian disc $W^0$, and let $M^1$ be the outcome of the associated handle attachment, with its Lagrangian sphere $L^1$. We then have the following folk stabilization theorem:

\begin{lemma} \label{th:cancellation}
Take any collection of Lagrangian spheres $V^0_i$, $1 \leq i \leq m$, in $M^0$. The total space of the Lefschetz fibration with fibre $M^0$ and vanishing cycles $(V^0_1,\dots,V^0_m)$ is deformation equivalent, as a Liouville domain, to that of the fibration with fibre $M^1$ and vanishing cycles $(V^1_1,\dots,V^1_{m+1})$, where
\begin{equation}
V^1_i = \begin{cases} V^0_i & i < j, \\ L^1 & i = j, \\ V^0_{i-1} & i > j \end{cases}
\end{equation}
for an arbitrary choice of $1 \leq j \leq m+1$. \qed
\end{lemma}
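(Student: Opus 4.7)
The plan is to derive the statement from Weinstein handle cancellation. Using Remark \ref{th:lefschetz-handles}, I first reinterpret both total spaces as sequences of Weinstein handle attachments on a rounded version of $D^2 \times M^0$. The domain $E^0$ is obtained by attaching $m$ critical $(n+1)$-handles along Legendrian lifts $\Lambda_i^0$ of the $V_i^0$. Since $M^1$ is obtained from $M^0$ by attaching a single Weinstein $n$-handle $H$ along $K^0$, the rounded $D^2 \times M^1$ differs from the rounded $D^2 \times M^0$ by one additional subcritical $n$-handle, namely $D^2 \times H$ attached along $\{y_0\} \times K^0$ for some $y_0 \in \partial D^2$ (subcritical because its index $n$ is less than the critical index $n+1$ in a $(2n+2)$-dimensional Weinstein domain). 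The domain $E^1$ is then built by attaching this subcritical handle together with the $m+1$ critical handles for $(V_1^0,\ldots,V_m^0, L^1)$, with the $L^1$ handle inserted at position $j$. Thus $E^1$ differs from $E^0$ by exactly two extra handles: one subcritical $n$-handle and one critical $(n+1)$-handle corresponding to $L^1$.

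The heart of the argument is to cancel this pair of handles. The cocore of the subcritical $n$-handle is $D^2 \times W^1$, an $(n+2)$-disc whose boundary in $\partial(D^2 \times M^1)$ is the belt $(n+1)$-sphere $(\partial D^2 \times W^1) \cup (D^2 \times K^1)$. The attaching sphere of the critical handle for $L^1$ is a Legendrian lift of $L^1$ sitting in a boundary fiber $\{y_1\} \times M^1$ for some $y_1 \in \partial D^2$. Restricted to that fiber, the relevant piece of the belt sphere reduces precisely to $\{y_1\} \times W^1$, and by the construction of $L^1$ as the union of $W^0$ with the core of $H$, the intersection $L^1 \cap W^1$ consists of a single transverse point. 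The remaining part $D^2 \times K^1$ of the belt sphere lies in $D^2 \times \partial M^1$, disjoint from the interior fiber containing the attaching sphere. Hence the belt $(n+1)$-sphere and the attaching $n$-sphere meet transversely in exactly one point inside the $(2n+1)$-dimensional boundary, which is precisely Smale's cancellation configuration. The particular value of $j$ plays no role, since the other critical handles for $V_1^0,\ldots,V_m^0$ are attached along Legendrians in disjoint regions of the boundary and can be isotoped out of the way of the cancelling pair.

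Applying Smale's cancellation lemma, upgraded to the Weinstein category as worked out by Cieliebak and Eliashberg, one then obtains a deformation of Liouville domain structures from $E^1$ to $E^0$, which is the assertion of the lemma.

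The principal difficulty I anticipate is not the smooth cancellation, which is classical, but verifying that it can be performed within the class of Liouville (in fact Weinstein) structures, without disturbing the Legendrian isotopy classes of the attaching spheres of the $m$ surviving critical handles. One needs to check that the Reeb flow can be arranged so that the subcritical and critical handles are genuinely in ``cancelling position'' in the contact sense, and that the subsequent handle slides do not escape the Weinstein category. This contact-geometric refinement of Smale's move is precisely the content underlying the ``folk'' status of the assertion, and is the only place where something more than elementary handle calculus is required.
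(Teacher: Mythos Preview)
The paper does not prove this lemma: it is stated with a terminal \qed as a ``folk stabilization theorem'', with only a reference to Giroux's analogous result for open books. So there is no argument in the paper to compare against, and your outline stands on its own.

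Your approach via a cancelling pair of Weinstein handles is correct and is one standard way to justify the result. A small imprecision: you locate the attaching sphere of the subcritical $n$-handle at $\{y_0\} \times K^0$ with $y_0 \in \partial D^2$, but in the product Weinstein structure on the rounded $D^2 \times M^1$ the extra index-$n$ critical point sits at $(0,x_0)$ with $x_0$ the centre of $H$, so its descending sphere is $\{0\} \times K^0$ flowed out to the $D^2 \times \partial M^0$ face of the boundary. This does not affect the rest of your argument, since what you actually use is the cocore, which is indeed the unstable manifold $D^2 \times W^1$; the single transverse intersection of its belt sphere with the Legendrian lift of $L^1$ then follows from $L^1 \cap W^1 = \{\mathrm{pt}\}$ exactly as you say. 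The substantive input, as you correctly flag, is the Weinstein version of Smale's cancellation, for which one appeals to \cite{cieliebak-eliashberg}.

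An alternative route, closer to the ``stabilization'' language the paper invokes, observes that the model Lefschetz fibration $z \mapsto z_1^2 + \cdots + z_{n+1}^2$ on $\bC^{n+1}$ has fibre $T^*S^n$, a single vanishing cycle (the zero section), and total space a ball; the operation in the lemma is then a fibred boundary connect sum, along the disc $W^0$, with this standard piece, hence does not change the Liouville deformation class. Both arguments require comparable care to make rigorous in the exact symplectic category.
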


We complete our discussion by presenting a slightly different viewpoint. First, as a somewhat trivial example of the general class of Morse-Bott handle attachments \cite{johns09}, we have the following process. Take two Liouville domains $M^0_+$ and $M^0_-$, together with Legendrian embeddings $K \hookrightarrow \partial M^0_\pm$ of the same closed manifold into their respective boundaries, with image $K^0_\pm$. One can then form a Liouville domain $M^1 \supset M^0_+ \cup M^0_-$ by attaching a handle with core $K \times [-1,1]$. The actual form of the handle is
\begin{equation} \label{eq:morse-bott-handle}
\begin{aligned}
& H = \{ (x,s,t) \in T^*K \times \bR \times [-1,1] \;:\;
\|x\|^2 + s^2 \leq \epsilon \}, \\
& \theta_H = \theta_{T^*K} + 2 s\, dt + t\, ds.
\end{aligned}
\end{equation}
Here, $\theta_{T^*K}$ is the canonical one-form on the cotangent bundle, $\|\cdot\|$ is the norm obtained from some Riemannian metric on $K$, and $\epsilon>0$ a small number. $H$ is a manifold with corners, whose boundary is the union of $\partial_{\mathrm{out}} H = \{\|x\|^2 + s^2 = \epsilon\}$ and $\partial_{\mathrm{in}} H = \{t = \pm 1\}$. The Liouville vector field
\begin{equation}
Z_H = Z_{T^*K} + 2s \partial_s - t \partial_t
\end{equation}
points strictly outwards along $\partial_{\mathrm{out}} H$, and strictly inwards along $\partial_{\mathrm{in}} H$. Using the local normal form theorem for Legendrian submanifolds inside contact type hypersurfaces \cite[Proposition 4.2]{weinstein91}, we glue together $M^0_\pm$ and $H$, identifying the two parts of $\partial_{\mathrm{in}} H$ with neighbourhoods of $K^0_{\pm} \subset \partial M^0_{\pm}$. The result at this point is a symplectic manifold with concave codimension two corners, and which can be equipped with a Liouville vector field that points strictly outwards everywhere. It is then easy to smooth out the corners while preserving the last-mentioned property.

Weinstein's construction can be thought of as a special case of the one we have just described, as follows. Given a single Legendrian sphere $K^0_+ \subset \partial M^0_+$, we take $M^0_- = D^{2n}$ to be the unit disc with its standard Liouville structure (meaning that the Liouville vector field is radial), and $K^0_-$ the flat sphere $\{0\}^n \times S^{n-1} \subset \partial M^0_-$. Attaching \eqref{eq:morse-bott-handle} with $K = S^{n-1}$ is then the same as Weinstein handle attachment. In this picture (see Figure \ref{fig:handle2}), the co-core disc is $D^n \times \{0\}$. If we assume that $K^0_+$ is the boundary of some Lagrangian disc in $M^0_+$, the resulting sphere $L_1$ is given by the union of that disc with $\{0\} \times D^n \subset D^{2n}$ and $S^{n-1} \times [-1,1] \subset H$. This is of course not all that different from the original description, but it will prove convenient for us.
\begin{figure}
\begin{center}
\epsfig{file=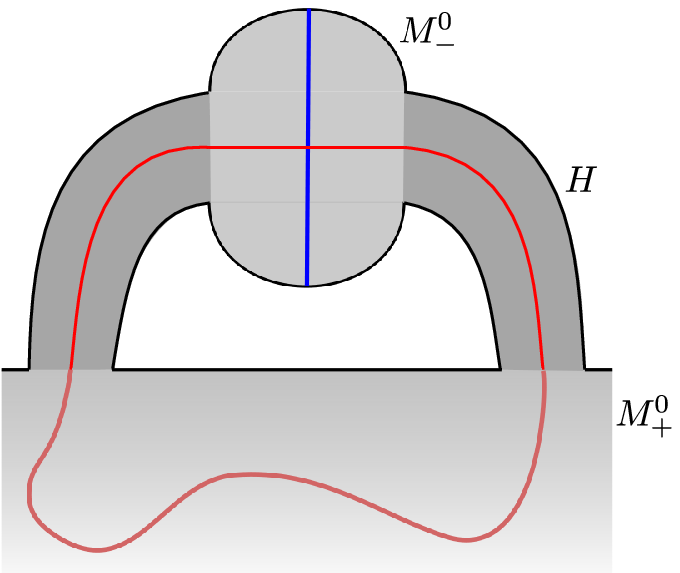}
\caption{\label{fig:handle2}}
\end{center}
\end{figure}

\subsection{A double handle attachment}
Start with a decomposition of the sphere $S^{n-1}$ into two (not necessarily connected) codimension zero submanifolds $U_\pm$, which intersect exactly along their common boundary $\partial U_+ = \partial U_-$, and of which one has Euler characteristic
\begin{equation} \label{eq:euler}
\chi(U_-) = 1.
\end{equation}
We can find a smooth $C^1$-small function $g: S^{n-1} \rightarrow \bR$ having $0$ as a regular value, with $g^{-1}(0) = \partial U_+ \cap \partial U_-$, and which is strictly positive on the interior of $U_+$, as well as strictly negative on the interior of $U_-$. Extend this to a smooth function
\begin{equation} \label{eq:extension}
g: \bR^n \longrightarrow \bR,
\end{equation}
small in the interior of the unit disc, and satisfying $g(tq) = t^2 g(q)$ for all $|q| \geq 1/2$ and $t \geq 1$. This gives rise to a Lagrangian submanifold
\begin{equation} \label{eq:graph}
G = \mathrm{Graph}(dg) \cap D^{2n} \subset D^{2n}.
\end{equation}
Here, $\mathrm{Graph}(dg) = \{p = dg_q\}$ is thought of as lying inside $T^*\bR^n = \bR^{2n}$, and we then intersect it with the unit disc. $G$ intersects the boundary in a Legendrian submanifold. To see that, note that the homogeneity assumption implies that $D^2 g_q(q,\cdot) = dg_q$, which in turn implies that on the region where $|q| \leq 1/2$, the radial vector field $Z_{\bR^{2n}}$ is tangent to the graph of $dg$. On the other hand, by choosing $g$ small we may assume that for all $|q| \leq 1/2$, $\|dg_q\|^2 < 3/4$, which means that the relevant part of the graph of $dg$ stays inside the unit ball. Hence, along its boundary $G$ is tangent to the radial vector field, which implies the desired condition. The same argument shows that $G$ itself projects to a star-shaped region with smooth boundary in $\{0\}^n \times \bR^n$, hence is a disc. Finally, homogeneity and the regularity assumption on the zero level set imply that $g$ has no critical points outside the region $|q| < 1/2$. From this, we conclude that $\partial G$ is disjoint from the standard sphere $\{0\}^n \times S^{n-1}$.

\begin{lemma} \label{th:disjoin}
By a (smooth but not usually Lagrangian) isotopy inside $D^{2n}$ which leaves $\partial G$ fixed, one can make $G$ disjoint from the zero-section.
\end{lemma}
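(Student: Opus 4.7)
My plan is to exhibit a nonvanishing vector field $V : \bR^n \to \bR^n \setminus \{0\}$ that agrees with $dg$ on $\{|q| \ge 1/2\}$, and then to isotope $G$ to $\mathrm{Graph}(V) \cap D^{2n}$ through the straight-line family $V_t = (1-t)\,dg + tV$. The candidate isotopy
\begin{equation*}
G_t \;=\; \bigl\{(q, V_t(q)) \in \bR^{2n} \;:\; |q|^2 + |V_t(q)|^2 \le 1\bigr\}
\end{equation*}
sits inside $D^{2n}$ by construction, and rescaling $V$ so that $|V(q)|^2 < 3/4$ on $\{|q| \le 1/2\}$ (the same bound already imposed on $dg$ in order to produce $G$) forces $\partial G_t \subset \{|q| > 1/2\}$ for every $t$. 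On that region $V_t = dg$, so $\partial G_t = \partial G$ throughout. At $t = 1$ the graph of $V$ is disjoint from the zero-section $\{p = 0\}$ since $V$ is nowhere zero.

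The existence of $V$ is a standard obstruction-theoretic question. After a small perturbation of $g$ supported in $\{|q| < 1/2\}$ -- which does not affect $\partial G$ -- we may assume that $g$ is Morse, so $dg$ has finitely many transverse zeros there. By the Hopf degree theorem, $dg|_{\{|q| \ge 1/2\}}$ admits a nonvanishing extension to the interior if and only if the degree of $q \mapsto dg_q/|dg_q|$ on $\{|q| = 1/2\} \cong S^{n-1}$ vanishes, and by the local Poincar\'e--Hopf formula this degree equals $\sum_{dg_q = 0} (-1)^{\mathrm{ind}(q)}$.

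The heart of the argument is the computation of this signed count, and this is where the hypothesis \eqref{eq:euler} enters. Differentiating the homogeneity $g(tq) = t^2 g(q)$ at $t = 1$ gives $dg_q(q) = 2g(q)$ on $S^{n-1}$, so the outward radial component of $\nabla g$ along $\partial D^n$ has the same sign as $g|_{S^{n-1}}$. Consequently $-\nabla g$ exits $D^n$ precisely through $\mathrm{int}(U_-)$, and the Morse complex of $g$ on $D^n$ computes the relative homology $H_*(D^n, U_-)$. Therefore
\begin{equation*}
\sum_{dg_q = 0} (-1)^{\mathrm{ind}(q)} \;=\; \chi(D^n, U_-) \;=\; 1 - \chi(U_-) \;=\; 0.
\end{equation*}

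The one point requiring actual care is the degeneracy of $\nabla g$ along $\partial U_\pm = g^{-1}(0) \cap S^{n-1}$, where it is tangent to the boundary sphere, so the Morse-theory-with-boundary setup above does not directly apply. I would resolve this by an auxiliary small perturbation of $g$ supported near $\partial D^n$ that pushes the zero set of $g|_{S^{n-1}}$ slightly into the interior of $U_-$, making $\nabla g$ transverse to $\partial D^n$ everywhere, without changing either $\chi(U_-)$ up to homotopy or the algebraic count of interior zeros. With this technical point in hand, the rest is routine obstruction theory and the straight-line isotopy described above.
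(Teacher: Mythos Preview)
Your argument is correct and follows essentially the same route as the paper: both compute that the degree of $dg$ vanishes because $\chi(D^n,U_-)=1-\chi(U_-)=0$, and then homotope $dg$ to a nowhere-vanishing map whose graph provides the displacing isotopy. The only differences are cosmetic---you work directly inside $\{|q|\le 1/2\}$ with an explicit size bound and a straight-line homotopy, whereas the paper uses an arbitrary compactly supported homotopy and then rescales via homogeneity to fit it into $D^{2n}$.
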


\proof The degree of $dg$ is (up to sign) the Euler characteristic of $\bR^{2n}$ relative to a sufficiently negative level set of $g$, which is the Euler characteristic of the pair $(D^{2n},U_-)$, hence zero by assumption \eqref{eq:euler}. One can therefore find a family $(X_t)_{0 \leq t \leq 1}$ of maps $\bR^n \rightarrow \bR^n$, such that $X_0 = dg$, $\partial_t X_t = 0$ outside a compact subset, and $X_1$ is nowhere vanishing. The graphs $\mathrm{Graph}(X_t)$ form a compactly supported isotopy which displaces $\mathrm{Graph}(dg)$ from the zero-section. Using the homogeneity property, one can easily shrink this so that it is supported inside the unit disc. \qed
\begin{figure}
\begin{center}
\epsfig{file=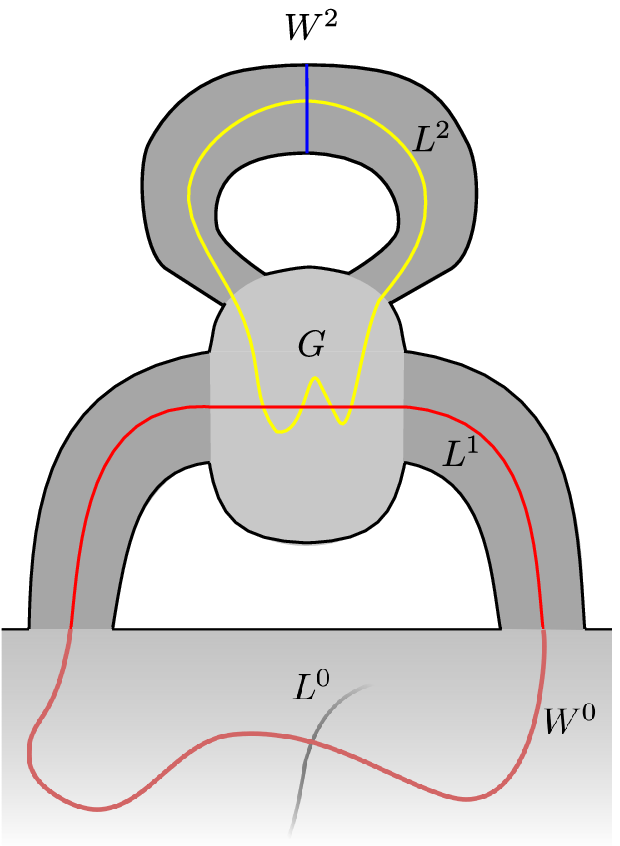}
\caption{\label{fig:doublehandle}}
\end{center}
\end{figure}

We now describe our main construction. Start with a Liouville domain $M^0$ containing both a Lagrangian sphere $L^0$ and a Lagrangian disc $W^0$ with Legendrian boundary, such that $L^0$ and $W^0$ intersect transversely and in a single point. Attach a Weinstein handle to $K^0 = \partial W^0$, forming $M^1$. We want to follow the alternative description of handle attachment indicated above, according to which $M^1$ contains a copy of $D^{2n}$. Since $\partial G$ is disjoint from the submanifold $\{0\} \times S^{n-1}$ near which we attach \eqref{eq:morse-bott-handle}, it survives into $\partial M^1$ if the size $\epsilon$ of the handle is chosen to be sufficiently small. In this way, we get another Lagrangian disc $W^1 = G$ in $M^1$ with boundary $\partial W^1 = \partial G$. Attach a second Weinstein handle to this, and denote the result by $M^2$. By construction, this contains a Lagrangian sphere $L^2$ obtained by gluing together $W^1$ and the core disc of the second handle. Finally, we have the co-core disc of that handle, which we denote by $W^2 \subset M^2$ (Figure \ref{fig:doublehandle} gives an overview of the situation). Because the intersections are so simple, it is easy to show that
\begin{align}
& \HF^*(L^1,L^0) \iso \bK, \label{eq:hf10} \\
& \HF^*(L^2,L^0) = 0, \\
& \HF^*(W^2,L^0) = 0, \\
& \HF^*(W^2,L^1) = 0, \label{eq:zerod} \\
& \HF^*(W^2,L^2) \iso \bK. \label{eq:oned}
\end{align}
Strictly speaking, to get $\bZ/2$-graded Floer cohomology groups, we need to fix orientations of all our Lagrangian submanifolds. Supposing that an orientation of $L^0$ is given, let us orient $L^1$ so that \eqref{eq:hf10} is nontrivial in odd degree. $G$ inherits a standard orientation as graph, and we equip $L^2$ with the induced orientation. Finally, $W^2$ is oriented so that \eqref{eq:oned} is nontrivial in even degree.

\begin{lemma} \label{th:classical-u}
$\HF^*(L^2,L^1) \iso \tilde{H}^{*-1}(U_-;\bK)$, where the right hand side is reduced (ordinary) cohomology.
\end{lemma}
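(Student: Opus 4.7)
The plan is to reduce $\HF^*(L^2, L^1)$ to a Morse-theoretic computation involving $g$. Since $L^1$ does not enter the second Weinstein handle, and inside $D^{2n} \subset M^1$ we have $L^1 \cap D^{2n} = \{0\} \times D^n$ (the zero-section) while $L^2 \cap D^{2n} = G = \mathrm{Graph}(dg)$, the intersections $L^1 \cap L^2$ are precisely the critical points of $g$, all contained in $\{|q| < 1/2\}$. Taking $g$ Morse on this region, these intersections are transverse, and their $\bZ/2$-Floer gradings match Morse index parity, compatible with the orientations fixed above.

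In the Weinstein neighbourhood of $L^1 \cap D^{2n}$ modelled on $T^*D^n$, with $L^1$ the zero-section and $L^2 = G$ the exact graph of $dg$, the standard Floer--Morse reduction identifies holomorphic strips staying in the neighbourhood with flow lines of $-\nabla g$ on $D^n$. The non-standard feature is that $L^2$ exits the neighbourhood along $\partial G$ and continues as the core of the second handle. Using the homogeneity $g(tq) = t^2 g(q)$ for $|q| \geq 1/2$, the radial component of $\nabla g$ at $|q| = 1$ equals $2 \tilde g(q/|q|)$, so $-\nabla g$ escapes $D^n$ through the open part of $U_-$ and is inward-pointing along the open part of $U_+$. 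The gradient lines escaping through $U_-$ correspond, in the Floer picture, to holomorphic strips that leave $D^{2n}$ via $\partial G$ and enter the second handle. Since the only exact Lagrangian disc in that handle with boundary $\partial G$ is its core, a neck-stretching argument along $\partial D^{2n}$, combined with the action identity $A(u) = g(c_+) - g(c_-)$ from exactness of both Lagrangians, pins these strips down and identifies them with Morse flow lines terminating freely on $U_-$.

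The upshot is that the Floer complex for $(L^2, L^1)$ is quasi-isomorphic to the Morse cochain complex of $g|_{D^n}$ with free boundary condition on $U_-$, whose cohomology is $H^*(D^n, U_-;\bK)$. Since $D^n$ is contractible, the long exact sequence of the pair $(D^n, U_-)$ collapses to the isomorphism $H^*(D^n, U_-;\bK) \iso \tilde H^{*-1}(U_-;\bK)$, as claimed. The main technical obstacle is the step-two matching: verifying that the strips exiting through $\partial G$ contribute exactly the free-boundary correction along $U_-$, and no other spurious terms. This should follow from exactness, the explicit Weinstein model of the second handle, and a careful neck-stretching analysis along $\partial D^{2n}$ to rule out additional bubbling.
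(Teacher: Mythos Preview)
Your identification of the intersection points with the critical points of $g$, and your final Morse-theoretic formula $H^*(D^n,U_-;\bK)\iso\tilde H^{*-1}(U_-;\bK)$, are both correct. The gap is in the middle: you do not actually establish that the Floer differential coincides with the Morse differential. Your proposal for doing so is a neck-stretching argument along $\partial D^{2n}$, meant to match strips that escape into the second handle with gradient lines exiting through $U_-$, but you leave this step unproved and explicitly flag it as the ``main technical obstacle''. As stated it is genuinely delicate: you would need to rule out strips that enter the second handle (or $M^0$, or the first handle) and return, and your assertion that ``the only exact Lagrangian disc in that handle with boundary $\partial G$ is its core'' is itself a nontrivial claim that does not by itself control holomorphic strips.

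The paper bypasses all of this with a much simpler device. Since both $L^1$ and $L^2$ are exact, the energy of any Floer strip equals the difference of $g$-values at its endpoints; by rescaling $g$ one makes all such differences arbitrarily small without changing $\partial G$ or the rest of the construction. The ambient geometry away from the intersection locus is fixed independently of this rescaling, so a standard Monotonicity Lemma argument forces every strip to stay inside $D^{2n}$. No strip ever reaches the second handle, and your neck-stretching issue simply does not arise. One is then computing $\HF^*(G,\{0\}^n\times D^n)$ in $D^{2n}$, which by the same energy/Monotonicity reasoning agrees with $\HF^*(\mathrm{Graph}(dg),\{0\}^n\times\bR^n)$ in $\bR^{2n}$; Floer's classical theorem identifies this with the Morse cohomology of $g$ on $\bR^n$, namely $H^*(\bR^n,\{g\ll 0\};\bK)\iso H^*(D^n,U_-;\bK)$. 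In short: rather than trying to account for escaping strips, make the energy small enough that there are none.
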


\begin{proof}
By rescaling the function $g$ appropriately, one can achieve (without changing $\partial G$ and hence the rest of the construction) that the holomorphic strips involved in computing this Floer cohomology group have very small energy. By Monotonicity Lemma arguments, this implies that all these strips are actually contained in $D^{2n}$, so we can carry out the computation there. By the same argument, the Floer cohomology of $(G,\{0\}^n \times D^n)$ inside $D^{2n}$ is the same as that of $(\mathrm{Graph}(dg),\{0\}^n \times \bR^n)$ inside $\bR^{2n}$. Finally, by a variant of Floer's classical result \cite{floer89} (see also \cite{fukaya-oh98}), the latter group can be identified with the Morse cohomology of the function $g$, which is the ordinary cohomology of $\bR^{2n}$ relative to any sufficiently negative sublevel set, or equivalently the relative cohomology $H^*(D^n,U_-;\bK) \iso \tilde{H}^{*-1}(U_-;\bK)$.
\end{proof}

Given any integer $\rho \geq 0$, we now apply a sequence of Dehn twists, forming a new Lagrangian sphere
\begin{equation} \label{eq:rho-twist}
\tau_{L^2}^{\rho} \tau_{L^1}(L^0) \htp \tau_{L^2}^{\rho} \tau_{L^0}^{-1}(L^1) \htp \tau_{L^0}^{-1} \tau_{L^2}^{\rho} (L^1).
\end{equation}
Here, the first $\htp$ is the Lagrangian isotopy from \cite[Appendix A]{seidel98b}, which in this case is compatible with orientations because of the assumption on \eqref{eq:hf10}. The second $\htp$ is obvious because $L^0$ and $L^2$ are disjoint (it will be an equality if the supports of the Dehn twists are chosen sufficiently small). We first consider the topological aspect of this:

\begin{lemma} \label{th:homotopy}
Up to isotopy of almost Lagrangian spheres, $\tau_{L^2}^\rho \tau_{L^1}(L^0) $ is independent of $\rho$.
\end{lemma}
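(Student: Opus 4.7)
The plan is to reduce independence of $\rho$ to the statement that $L^1$ and $L^2$ can be separated through almost Lagrangian isotopy. Starting from \eqref{eq:rho-twist}, one has $\tau_{L^2}^\rho \tau_{L^1}(L^0) \htp \tau_{L^0}^{-1}\tau_{L^2}^\rho(L^1)$, and since $\tau_{L^0}^{-1}$ preserves isotopy classes of almost Lagrangian spheres, it suffices to show that $\tau_{L^2}^\rho(L^1)$ is independent of $\rho$ as an almost Lagrangian sphere.

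First I would locate $L^1 \cap L^2$. The sphere $L^1$ consists of three pieces: $W^0 \subset M^0$, the flat zero section $\{0\}^n \times D^n \subset D^{2n}$, and the cylinder $S^{n-1} \times [-1,1]$ inside the first Morse--Bott handle. The sphere $L^2$ is the union of $W^1 = G \subset D^{2n}$ with the core of the second handle, which is attached along $\partial G \subset \partial M^1$. Because the second handle can be made arbitrarily small (and $\partial G$ was arranged to be disjoint from $\{0\}^n \times S^{n-1}$), its core avoids the three $L^1$-pieces, and only $\{0\}^n \times D^n$ among them meets $D^{2n}$. Hence $L^1 \cap L^2 = (\{0\}^n \times D^n) \cap G$, which is the critical set of $g$.

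Second I would apply Lemma \ref{th:disjoin} to smoothly isotope $G$ within $D^{2n}$, keeping $\partial G$ fixed, to a disc disjoint from the zero section. Extending by the identity outside $D^{2n}$ turns this into a smooth isotopy from $L^2$ to a sphere $\tilde{L}^2$ disjoint from $L^1$. By the isotopy invariance of almost Lagrangian structures recorded in Section \ref{subsec:almost}, $\tilde L^2$ inherits an almost Lagrangian structure, and the isotopy extends to an ambient almost symplectic isotopy that identifies $\tau_{L^2}$ with $\tau_{\tilde L^2}$ up to isotopy. Choosing the latter twist with support in a small neighbourhood of $\tilde L^2$ disjoint from $L^1$, we get $\tau_{\tilde L^2}^\rho(L^1) = L^1$ on the nose, so $\tau_{L^2}^\rho(L^1) \htp L^1$ as almost Lagrangian spheres for every $\rho$.

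The main (mild) obstacle is the combinatorial bookkeeping of the first step: one has to be confident that, after the second handle attachment, no new intersection of $L^1$ with $L^2$ sneaks in through the attaching region. This is ensured by the existing construction, since $\partial G$ lies in the $D^{2n}$-piece of $\partial M^1$ and stays clear of the Morse--Bott handle where the $S^{n-1}\times[-1,1]$ component of $L^1$ sits, provided we take the sizes of both handles sufficiently small.
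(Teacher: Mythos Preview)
Your proposal is correct and follows essentially the same route as the paper: use Lemma \ref{th:disjoin} to smoothly displace $L^2$ from $L^1$, transport the almost Lagrangian structure along the isotopy, conclude that $\tau_{L^2}^\rho(L^1) \htp L^1$ for all $\rho$, and finish via the rightmost expression in \eqref{eq:rho-twist}. The paper's proof is terser, taking for granted what you spell out in your first step (that all intersections of $L^1$ and $L^2$ occur inside the $D^{2n}$ piece).
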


\proof Lemma \ref{th:disjoin} shows that one can make $L^2$ disjoint from $L^1$ by an isotopy of embedded spheres. Denote the result by $\tilde{L}^2$, equipping it with the almost Lagrangian structure inherited from the isotopy. Then, $\tau_{\tilde{L}^2}$ is isotopic to $\tau_{L^2}$ through almost symplectomorphisms. But $\tau_{\tilde{L}^2}^\rho(L^1) \htp L^1$ for all $\rho$, hence the result follows by looking at the rightmost expression in \eqref{eq:rho-twist}. \qed

Returning to actual symplectic geometry, \eqref{eq:rho-twist} implies that
\begin{align}
 \HF^*(L^2,\tau_{L^2}^\rho \tau_{L^1}(L^0)) & \iso \HF^{*-\rho(n-1)}(L^2,L^1), \label{eq:back1} \\
\HF^*(W^2,\tau_{L^2}^\rho \tau_{L^1}(L^0)) & \iso \HF^*(\tau_{L^2}^{-\rho}(W^2),L^1).
\label{eq:twisted-floer-group}
\end{align}
The shift in \eqref{eq:back1} appears because $\tau_{L^2}$ reverses the orientation of $L^2$ if $n$ is even. We will need to compute the last-mentioned group more precisely:

\begin{lemma} \label{th:dim-grow}
The total dimension of $\HF^*(W^2,\tau_{L^2}^\rho \tau_{L^1}(L^0))$ is $\rho$ times that of $\tilde{H}^*(U_-;\bK)$.
\end{lemma}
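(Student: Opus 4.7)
The plan is to begin from the isomorphism \eqref{eq:twisted-floer-group}, which reduces the task to computing $\dim \HF^*(\tau_{L^2}^{-\rho}(W^2), L^1)$, and then to induct on $\rho$ using Seidel's exact triangle for Dehn twists. For any Lagrangian $A$, that triangle produces a long exact sequence
\begin{equation*}
\cdots \to \HF^*(A, L^2) \otimes \HF^*(L^2, L^1) \xrightarrow{\mu^2} \HF^*(A, L^1) \to \HF^*(\tau_{L^2}^{-1}(A), L^1) \to \cdots,
\end{equation*}
whose connecting map is the Floer triangle product. I would apply this iteratively with $A = \tau_{L^2}^{-(\rho-1)}(W^2)$, the base case $\rho = 0$ being supplied by \eqref{eq:zerod}.

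The first subsidiary ingredient I need is that $\HF^*(\tau_{L^2}^{-\rho}(W^2), L^2)$ remains one-dimensional for every $\rho \geq 0$. I would prove this by a parallel induction using the same exact triangle with $L^1$ replaced by $L^2$; the connecting map there is the right module action of $\HF^*(L^2, L^2) \iso H^*(S^n;\bK)$ on the one-dimensional space $\HF^*(A, L^2)$. Because $W^2$ meets $L^2$ transversely in a single point, this action mirrors the standard $H^*(S^n)$-module structure on $H^*(\mathrm{pt})$: the unit acts as the identity and the fundamental class acts as zero. Consequently the action has vanishing cokernel and one-dimensional kernel, so the long exact sequence preserves one-dimensionality of the group, merely shifting its $\bZ/2$-degree by $n-1$ at each step; the base case is \eqref{eq:oned}.

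The main obstacle is to show that the connecting map $\mu^2$ in the principal triangle vanishes, so that the long exact sequence splits into short exact sequences and total dimensions add. I would attempt this by localising the computation: rescaling $g$ as in the proof of Lemma \ref{th:classical-u}, all relevant pseudoholomorphic strips and triangles are confined to a Weinstein neighbourhood of $L^2 \iso S^n$, where $\tau_{L^2}^{-\rho}(W^2)$ is modelled explicitly as an iterated graph twist of a cotangent fibre. In that local model, the inductive generator of $\HF^*(\tau_{L^2}^{-(\rho-1)}(W^2), L^2)$ traces back to the unique transverse point $W^2 \cap L^2$, and the vanishing of $\mu^2$ follows from an explicit action or Maslov-index count, or equivalently from the observation that this generator comes from the kernel of the $[L^2]$-module action in the subsidiary triangle and is therefore annihilated by any triangle product through $L^2$. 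Granting $\mu^2 = 0$, the principal triangle yields
\begin{equation*}
\dim \HF^*(\tau_{L^2}^{-\rho}(W^2), L^1) = \dim \HF^*(\tau_{L^2}^{-(\rho-1)}(W^2), L^1) + \dim \tilde{H}^*(U_-;\bK),
\end{equation*}
and induction from $\rho = 0$ delivers the stated formula.
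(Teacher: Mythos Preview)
Your overall strategy---reducing via \eqref{eq:twisted-floer-group} and inducting on $\rho$ through Seidel's long exact sequence for $\tau_{L^2}$---is exactly the paper's. The gap is in the key step of showing that the connecting map $\mu^2$ vanishes. Your module-theoretic justification does not work: that the generator of $\HF^*(A,L^2)$ is annihilated by the fundamental class $[L^2] \in \HF^n(L^2,L^2)$ says nothing about its product with elements of $\HF^*(L^2,L^1)$, since there is no reason those should factor through $[L^2]$ (indeed, \emph{every} element of the one-dimensional $\HF^*(A,L^2)$ is killed by $[L^2]$, so the observation has no content). The alternative ``action or Maslov-index count'' is left unspecified; any precise version requires $\bZ$-gradings, which are not a priori available since nothing has been assumed about $c_1(M^2)$. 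Your localisation to a Weinstein neighbourhood of $L^2$ alone is also problematic, because $L^1$ is not contained in such a neighbourhood.

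The paper fills this gap by first restricting to the special case $M^0 = D^*S^n$, checking that $c_1(M^2)=0$ there, and upgrading all Floer groups to $\bZ$-gradings. One then computes that the first term of the long exact sequence is concentrated in degrees $[\rho(n-1)+1,(\rho+1)(n-1)]$ while, inductively, the middle term lives in strictly lower degrees, forcing the map between them to vanish. The general $M^0$ is handled by noting that it contains a cotangent-disc neighbourhood of $L^0$ inside which the entire double-handle construction (and hence $L^1$, $L^2$, $W^2$) takes place, and a standard Liouville-subdomain convexity argument confines the Floer computation there. (As a minor aside, the paper also obtains the one-dimensionality of $\HF^*(\tau_{L^2}^{-\rho}(W^2),L^2)$ in one step via $\tau_{L^2}(L^2) \htp L^2[1-n]$, rather than through a parallel induction as you propose.)
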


\proof
Let us temporarily restrict to the simplest imaginable example, namely when $M^0 = D^*S^n$ is the unit cotangent bundle of the sphere, $L^0$ is the zero-section, and $K^0$ a fibre. Because the attachment process involves only $n$-handles, $M^2$ is necessarily homotopy equivalent to a wedge of three $n$-spheres. For $n>2$, this directly implies that $c_1(M^2) = 0$. For $n = 2$, we reach the same conclusion after observing that $H_2(M^2) \iso \bZ^3$ is generated by the classes of the Lagrangian spheres $L^k$ ($k = 0,1,2$). Given this, we can equip all our Lagrangian submanifolds with gradings, and then refine the $\bZ/2$-gradings of their Floer cohomology groups to $\bZ$-gradings. More precisely, $L^1$ should be graded so that \eqref{eq:hf10} is nontrivial in degree one; $G$ carries the preferred grading as a graph, and we extend that to $L^2$; finally, $W^2$ is graded so that \eqref{eq:oned} is nontrivial in degree zero. Then Lemma \ref{th:classical-u} and Equation \eqref{eq:back1} hold as isomorphisms of graded groups, the latter thanks to the self-shift formula $\tau_{L^2}(L^2) = L^2[1-n]$ \cite[Lemma 5.7]{seidel99}. From \cite{seidel01} we have a long exact sequence
\begin{multline} \label{eq:LES}
\cdots \rightarrow \HF^*(L^2,L^1) \otimes \HF^*(\tau_{L^2}^{-\rho}(W^2),L^2) \\ \longrightarrow \HF^*(\tau_{L^2}^{-\rho}(W^2),L^1) \longrightarrow \HF^*(\tau_{L^2}^{-(\rho+1)}(W^2),L^1) \rightarrow \cdots
\end{multline}
The left-hand term can be computed explicitly:
\begin{equation} \label{eq:left-term}
\begin{aligned}
\HF^*(L^2,L^1) \otimes \HF^*(\tau_{L^2}^{-\rho}(W^2),L^2) & \iso \HF^*(L^2,L^1) \otimes \HF^*(W^2, \tau_{L^2}^{\rho}(L^2))   \\
& \iso \HF^*(L^2,L^1) \otimes \HF^{*-\rho(n-1)}(W^2,L^2) \\ & \iso \tilde{H}^{*-\rho(n-1)-1}(U_-;\bK).
\end{aligned}
\end{equation}
Since $U_-$ is a compact $(n-1)$-manifold with no closed components, its reduced cohomology is concentrated in degrees $[0,n-2]$, so \eqref{eq:left-term} is concentrated in degrees $[\rho(n-1)+1,(\rho+1)(n-1)]$. Note that these intervals do not overlap. Starting with \eqref{eq:zerod} and applying \eqref{eq:left-term} repeatedly,  we conclude that the left hand term in \eqref{eq:LES} is nonzero only in degrees $> \rho(n-1)$, whereas the middle term is nonzero only in degrees $< \rho(n-1)$. Hence, the map between those two must vanish, which (arguing by induction) implies that
\begin{equation}
\HF^*(W^2,\tau_{L^2}^{\rho} \tau_{L^1}(L^0)) \iso \bigoplus_{i=0}^{\rho-1} \tilde{H}^{*-i(n-1)}(U_-;\bK).
\end{equation}
A priori this is just valid for one example, but any $M^0$ with our general assumptions contains a small cotangent disc bundle around $L^0$, and the resulting $M^2$ then contains the manifold considered in our computation as a Liouville subdomain. By standard convexity arguments, such as \cite[Lemma 7.2]{abouzaid-seidel07}, the Floer cohomology computation can be done inside the smaller space, which completes the argument. \qed

We will also need to carry out this process for several Lagrangian spheres simultaneously. Namely, suppose that $M^0$ contains Lagrangian spheres $(L^0_1,\dots,L^0_m)$, and in addition has the following property:

\begin{assumption} \label{th:co-disc}
Inside $M^0$, there are Lagrangian discs with Legendrian boundary $(W^0_1,\dots,W^0_m)$, such that each $W^0_i$ intersects $L^0_i$ transversely and in a single point (note that $W^0_i$ is allowed to intersect the other $L^0_j$, $j \neq i$, arbitrarily). Moreover, we assume that the boundaries of the $W^0_i$ are mutually disjoint (which is unproblematic, since it can always be achieved by a generic perturbation).
\end{assumption}

Let $M^2$ be the Liouville manifold obtained by carrying out a double handle attachment along each of the $W^0_i$, for $1 \leq i \leq m$ (using the same decomposition $U_\pm$ and function $g$ each time, for simplicity). This contains Lagrangian spheres $L^1_i$ and $L^2_i$, as well as discs $W^2_i$. The analogues of Lemma \ref{th:homotopy} and of the Floer cohomology computations hold for each $i$. In the latter case, this is again by a convexity argument, which shows that when computing Floer cohomology relevant to the $i$-th collection, the presence of the other handle attachments can be ignored.

\subsection{Homologous recombination}
Let $M$ be the fibre of an exact Lefschetz fibration, with vanishing cycles $(V_1,\dots,V_m)$, and total space (after rounding off corners) $E$. Set $M = M^0$ and $L_i^0 = V_i$; assume in addition that $M$ contains discs $W_i^0 = W_i$, such that Assumption \ref{th:co-disc} is satisfied. The resulting $\tilde{M} = M^2$ will be equipped with a new set of vanishing cycles $(\tilde{V}_1,\dots,\tilde{V}_{3m})$, namely
\begin{equation} \label{eq:tilde-collection}
\left\{
\begin{aligned}
& \tilde{V}_{3i-2} = \tau_{L^2_i}^\rho \tau_{L^1_i}(L^0_i), \\
& \tilde{V}_{3i-1} = L^2_i, \\
& \tilde{V}_{3i} = L^1_i
\end{aligned}
\right.
\end{equation}
where $\rho > 0$ is fixed (in principle, one could choose different values of $\rho$ for different $i$, but we will not make use of this additional freedom). Using these, we construct a new Lefschetz fibration, whose total space will be denoted by $\tilde{E}$, with Lefschetz thimbles $(\tilde{\Delta}_1,\dots,\tilde{\Delta}_{3m})$.

\begin{lemma} \label{th:rho-is-1}
If $\rho = 1$, $\tilde{E}$ is deformation equivalent to $E$ (as a Liouville domain).
\end{lemma}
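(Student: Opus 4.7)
My plan is to present $\tilde{E}$ as the outcome of purely formal Lefschetz-fibration manipulations applied to $E$, using only Lemma \ref{th:cancellation} and standard Hurwitz moves on bases of vanishing cycles; the case $\rho=1$ is algebraically the simplest because the twist exponent exactly matches the number of double handle attachments performed for each $i$.

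First, starting from $E$, with fibre $M = M^0$ and vanishing cycles $(L^0_1,\dots,L^0_m)$, I would apply Lemma \ref{th:cancellation} once for each disc $W^0_i$. Because Assumption \ref{th:co-disc} guarantees that the $\partial W^0_i$ are mutually disjoint, the $m$ Weinstein handle attachments happen in disjoint regions of $\partial M^0$ and can be performed in any order, each one affecting a distinct part of the boundary and producing the sphere $L^1_i$. The freedom in the insertion position $j$ provided by Lemma \ref{th:cancellation} lets me place each new vanishing cycle $L^1_i$ immediately before $L^0_i$ in the ordered list, so that after all $m$ attachments the new fibre is $M^1$ and the vanishing cycle collection contains the adjacent pairs $(L^1_i, L^0_i)$. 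The resulting Lefschetz fibration is Liouville deformation equivalent to $E$.

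Next, inside $M^1$ each of the discs $W^1_i = G$ (produced by the first handle attachment, as in the double handle construction) sits in its own region of the boundary. I would apply Lemma \ref{th:cancellation} a second time to each $W^1_i$, which yields $M^2 = \tilde{M}$ and produces the additional Lagrangian spheres $L^2_i$. Inserting $L^2_i$ immediately before $L^1_i$, the resulting fibration has fibre $\tilde{M}$ and $3m$ vanishing cycles containing the triples $(L^2_i, L^1_i, L^0_i)$ adjacent, in some order indexed by $i$; it is still deformation equivalent to $E$. Now I would perform two Hurwitz moves within each triple: first transforming $(L^1_i, L^0_i)\rightsquigarrow(\tau_{L^1_i}(L^0_i), L^1_i)$, then $(L^2_i, \tau_{L^1_i}(L^0_i)) \rightsquigarrow (\tau_{L^2_i}\tau_{L^1_i}(L^0_i), L^2_i)$. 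The net effect is
\begin{equation*}
(L^2_i, L^1_i, L^0_i) \;\longmapsto\; \bigl(\tau_{L^2_i}\tau_{L^1_i}(L^0_i),\, L^2_i,\, L^1_i\bigr),
\end{equation*}
which is exactly $(\tilde{V}_{3i-2}, \tilde{V}_{3i-1}, \tilde{V}_{3i})$ when $\rho = 1$ in \eqref{eq:tilde-collection}.

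Since both Lemma \ref{th:cancellation} and Hurwitz moves preserve the Liouville deformation class of the total space, the end result of this chain of moves is a Lefschetz fibration with fibre $\tilde{M}$ and vanishing cycles $(\tilde{V}_1,\dots,\tilde{V}_{3m})$, whose total space is deformation equivalent to $E$ and, by construction, to $\tilde{E}$. The main thing to be careful about is not any deep geometry but the bookkeeping of framings and orientations of the inserted spheres, which has to match the conventions discussed in Section \ref{subsec:weinstein} (the $S^{n-1} \to K^0$ diffeomorphism extending to $D^n \to W^0$) so that the $L^1_i$ and $L^2_i$ produced by Lemma \ref{th:cancellation} are the same parametrized Lagrangian spheres used in \eqref{eq:tilde-collection}, and the Hurwitz moves are applied with a sign convention compatible with the isotopy in \eqref{eq:rho-twist}. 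Once this is checked, the argument is complete.
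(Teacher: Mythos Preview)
Your proposal is correct and uses the same two ingredients as the paper (Lemma \ref{th:cancellation} and Hurwitz moves), only run in the opposite direction: you build up from $E$ to $\tilde{E}$ by stabilizing $2m$ times along $W^0_i$ and $W^1_i$ and then applying Hurwitz moves within each triple, whereas the paper starts from $\tilde{E}$, first undoes the twists via the inverse Hurwitz moves \eqref{eq:hurwitz}, and then destabilizes using the co-core discs $W^2_i$ and the cotangent fibres $D^n \times \{0\}$. The arguments are equivalent; your forward direction has the slight expository advantage that Lemma \ref{th:cancellation} is invoked exactly as stated rather than in reverse.
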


\begin{proof}
Through Hurwitz moves (see for instance \cite{auroux-stable}), which do not affect the deformation class of the total space, we can change our collection \eqref{eq:tilde-collection} as follows:
\begin{equation} \label{eq:hurwitz}
\begin{aligned}
& (\tau_{L^2_1}\tau_{L^1_1}(L^0_1),L^2_1,L^1_1,\dots) \\
& \sim (L^2_1,\tau_{L^1_1}(L^0_1),L^1_1,\dots) \\
& \sim (L^2_1,L^1_1,L^0_1,\dots,L^2_m,L^1_m,L^0_m).
\end{aligned}
\end{equation}
Inside $\tilde{M} = M^2$ we have Lagrangian discs $W^2_i$, each of which intersects $L^2_i$ transversely in a single point, and is disjoint from all the other $L^j_k$. In the local model \eqref{eq:graph}, we have the cotangent fibre $D^n \times \{0\}$, which intersects $G$ transversely in a single point. These fibres give rise to $m$ other Lagrangian discs in $\tilde{M}$, each of which intersects one $L^1_i$ transversely in a single point, and is disjoint from the other $L^j_k$. By applying Lemma \ref{th:cancellation} repeatedly, one reduces the situation to the original collection of vanishing cycles in $M$.
\end{proof}

\begin{lemma}
For any $\rho$, $\tilde{E}$ is almost symplectomorphic to $E$.
\end{lemma}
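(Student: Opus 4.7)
The plan is to reduce the general case to the rigid statement of Lemma \ref{th:rho-is-1} via the homotopy-theoretic framework of Section \ref{subsec:almost}.

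First, I would invoke the last paragraph of Section \ref{subsec:almost}: the almost symplectic isomorphism type of the total space of a Lefschetz fibration depends only on the isotopy classes of its vanishing cycles as almost Lagrangian spheres. Inspecting the collection \eqref{eq:tilde-collection}, only the first member of each triple, namely $\tilde{V}_{3i-2} = \tau_{L^2_i}^{\rho}\tau_{L^1_i}(L^0_i)$, depends on $\rho$; the middle and last members $L^2_i$ and $L^1_i$ are built out of the fixed double handle attachment and are clearly independent of $\rho$.

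Next, I would apply Lemma \ref{th:homotopy}, which asserts precisely that $\tau_{L^2_i}^{\rho}\tau_{L^1_i}(L^0_i)$ is independent of $\rho$ up to isotopy of almost Lagrangian spheres. Although Lemma \ref{th:homotopy} is stated for a single double handle attachment, the multi-handle version follows by applying Lemma \ref{th:disjoin} separately inside each of the disjoint handle regions produced from $(W^0_1,\dots,W^0_m)$; this is legitimate because the handle attachment for index $i$ takes place in a neighbourhood of $W^0_i$, and Assumption \ref{th:co-disc} ensures these neighbourhoods can be chosen pairwise disjoint. Combining these pieces, the entire ordered tuple $(\tilde{V}_1,\dots,\tilde{V}_{3m})$ is, up to isotopy of almost Lagrangian spheres, independent of $\rho$. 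Hence the almost symplectic isomorphism type of $\tilde{E}$ is independent of $\rho$.

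Finally, specializing to $\rho=1$, Lemma \ref{th:rho-is-1} shows that $\tilde{E}$ is deformation equivalent to $E$ as a Liouville domain, and in particular almost symplectomorphic to $E$. Chaining this with the $\rho$-independence established above gives an almost symplectomorphism $\tilde{E} \simeq E$ for every $\rho$. The only step that requires any real care is verifying that Lemma \ref{th:homotopy} admits the multi-handle extension outlined above; the rest is a direct appeal to the formalism of Section \ref{subsec:almost} and to results already in hand.
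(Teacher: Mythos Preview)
Your argument is correct and follows exactly the same route as the paper's proof: invoke the almost symplectic invariance under isotopy of vanishing cycles, use Lemma \ref{th:homotopy} to reduce to $\rho = 1$, and conclude via Lemma \ref{th:rho-is-1}. The multi-handle extension you flag is indeed needed, and the paper addresses it just before this subsection (noting that the analogue of Lemma \ref{th:homotopy} holds for each $i$ by the disjointness of the handle regions), so your discussion simply makes that step explicit.
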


\begin{proof}
The almost symplectomorphism type of $\tilde{E}$ depends only on the isotopy classes of vanishing cycles as almost Lagrangian submanifolds. In view of Lemma \ref{th:homotopy}, we can therefore assume that $\rho = 1$, and then Lemma \ref{th:rho-is-1} completes the argument.
\end{proof}

As a final elementary remark, note that the complexity of the new manifold is bounded in a way which is independent of $\rho$, $U_\pm$ and $g$:
\begin{equation} \label{eq:5m-complexity}
\complexity(\tilde{E}) \leq \complexity(M) + 5m.
\end{equation}
This follows directly from \eqref{eq:complexity-one} and \eqref{cor:complexity-fibration}, since $\tilde{M}$ is obtained by attaching $2m$ Weinstein handles to $M$, and $\tilde{E}$ is the total space of a Lefschetz fibration with fibre $\tilde{M}$ and $3m$ vanishing cycles.

\begin{lemma} \label{th:all-rho}
Suppose that $\tilde{H}^*(U_-;\bK) = 0$. Then $\SH^*(\tilde{E})$ vanishes if and only if  $\SH^*(E)$ does.
\end{lemma}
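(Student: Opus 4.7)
The plan is to reduce the general case to $\rho = 1$, where Lemma \ref{th:rho-is-1} gives $\tilde{E} \sim E$ as Liouville domains, hence $\SH^*(\tilde{E}) \cong \SH^*(E)$. The bridge is the observation that, under the hypothesis $\tilde{H}^*(U_-;\bK) = 0$, the quasi-isomorphism type of the full $A_\infty$-subcategory $\scrB(\rho) \subset \Fuk(\tilde{M})$ on the objects $(\tilde{V}_1,\dots,\tilde{V}_{3m})$ of \eqref{eq:tilde-collection} is independent of $\rho$. Once this is established, Property \ref{th:meta} gives that each $\HW^*(\tilde{\Delta}_i, \tilde{\Delta}_i)$ is independent of $\rho$, and Property \ref{th:open-closed} converts this into $\SH^*(\tilde{E}) = 0 \iff \SH^*(\tilde{E})|_{\rho = 1} = 0 \iff \SH^*(E) = 0$.

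For the $\rho$-independence of $\scrB(\rho)$, the key point is that each $\tilde{V}_{3i-2} = \tau_{L^2_i}^{\rho}\tau_{L^1_i}(L^0_i)$ is quasi-isomorphic in $\Fuk(\tilde{M})$ to $\tau_{L^1_i}(L^0_i)$, while the other two objects in each triple, $\tilde{V}_{3i-1} = L^2_i$ and $\tilde{V}_{3i} = L^1_i$, do not depend on $\rho$ at all. Combined with the standard $A_\infty$ fact that replacing one object in a full subcategory by a quasi-isomorphic one yields a quasi-isomorphic $A_\infty$-category (in a manner compatible with the semisimple ring $R$ indexing the ordering of vanishing cycles), this gives $\scrB(\rho) \simeq \scrB(1)$.

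To prove the required quasi-isomorphism of vanishing cycles, I iterate the Dehn twist exact triangle. First, Lemma \ref{th:classical-u} together with the hypothesis $\tilde{H}^*(U_-;\bK) = 0$ yields $\HF^*(L^2_i, L^1_i) = 0$. Combined with $\HF^*(L^2_i, L^0_i) = 0$ (already recorded in the setup of \eqref{eq:hf10}--\eqref{eq:oned}) and the Dehn twist triangle expressing $\tau_{L^1_i}(L^0_i)$ as an extension of $L^0_i$ and $L^1_i$, this gives $\HF^*(L^2_i, \tau_{L^1_i}(L^0_i)) = 0$. Since $\tau_{L^2_i}$ is a symplectomorphism preserving $L^2_i$ up to a grading shift, the same computation yields $\HF^*(L^2_i, \tau_{L^2_i}^{k}\tau_{L^1_i}(L^0_i)) = 0$ for every $k \geq 0$; the Dehn twist triangle for $\tau_{L^2_i}$ then forces $\tau_{L^2_i}$ to act by the identity on any such object, and an induction on $\rho$ finishes.

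The main technical step requiring care is the transfer from a quasi-isomorphism of individual objects to a quasi-isomorphism of the full $A_\infty$-subcategories over $R$ that Property \ref{th:meta} needs. This is a routine but fiddly homological perturbation argument: one picks, for each $i$, a closed degree-zero morphism in $\Fuk(\tilde{M})$ realizing the quasi-isomorphism $\tau_{L^2_i}^{\rho}\tau_{L^1_i}(L^0_i) \simeq \tau_{L^2_i}\tau_{L^1_i}(L^0_i)$, extends it to an $A_\infty$-functor $\scrB(\rho) \to \scrB(1)$ by the standard inductive procedure (keeping the identity on the unchanged objects), and verifies strict unitality over $R$. Everything else follows mechanically from the properties already recorded.
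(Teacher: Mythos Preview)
Your proposal is correct and follows essentially the same route as the paper: use Lemma \ref{th:classical-u} and the hypothesis to get $\HF^*(L^2_i,L^1_i)=0$, apply the Dehn twist exact triangle to deduce $\HF^*(L^2_i,\tau_{L^1_i}(L^0_i))=0$, conclude that $\tau_{L^2_i}^\rho\tau_{L^1_i}(L^0_i)$ is independent of $\rho$ up to quasi-isomorphism in $\Fuk(\tilde{M})$, then invoke Property \ref{th:meta}, Lemma \ref{th:rho-is-1}, and Property \ref{th:open-closed}. The paper is slightly terser, citing the algebraic Dehn twist formula \cite[Corollary 17.17]{seidel04} directly for the $\rho$-independence rather than spelling out the inductive triangle argument, but the content is the same.
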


\begin{proof}
The assumption implies that $\HF^*(L^2_i,L^1_i) = 0$, hence by using the long exact sequence from \cite{seidel01} also that $\HF^*(L^2_i,\tau_{L^1_i}(L^0_i)) = 0$. In other words, while the two Lagrangian submanifolds $(L^2_i,\tau_{L^1_i}(L^0_i))$ are not geometrically disjoint, they behave as if they were disjoint in so far as the Fukaya category $\Fuk(\tilde{M})$ with coefficients in $\bK$ is concerned. By applying the algebraic expression for Dehn twists \cite[Corollary 17.17]{seidel04}, it follows that up to quasi-isomorphism in that category, $\tau_{L^2_i}^\rho\tau_{L^1_i}(L^0_i)$ is independent of $\rho$. In view of Property \ref{th:meta}, this shows that as far as the computation of $\HW^*(\tilde{\Delta}_i,\tilde{\Delta}_i)$ is concerned, we may just as well assume that $\rho = 1$, in which case $\tilde{E}$ is deformation equivalent to $E$ by Lemma \ref{th:rho-is-1}. Property \ref{th:open-closed} then does the rest.
\end{proof}

\begin{lemma} \label{th:rho-2}
Suppose that $\tilde{H}^*(U_-;\bK) \neq 0$. Then $\SH^*(\tilde{E})$ vanishes if $\rho \geq 2$.
\end{lemma}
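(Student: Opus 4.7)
By Property \ref{th:open-closed}, it suffices to show $\HW^*(\tilde{\Delta}_j, \tilde{\Delta}_j) = 0$ for every $j$. The plan is to apply Property \ref{th:iterate} iteratively, peeling off one leading vanishing cycle at a time, and to verify at each stage that the wrapped Floer cohomology of the leading thimble vanishes via Property \ref{th:hull} applied to a carefully chosen test Lagrangian disc. At each stage the leading cycle has one of three forms: (a) $\tilde{V}_{3k-2} = \tau_{L^2_k}^\rho \tau_{L^1_k}(L^0_k)$, (b) $L^2_k$, or (c) $L^1_k$, for some $1 \leq k \leq m$.

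In case (a) I apply Property \ref{th:hull} with $W = W^2_k$. Since $W^2_k$ is the co-core of the second handle at index $k$, it is Floer-disjoint from every $L^{j}_{k'}$ and every $\tilde{V}_{3k'-2}$ with $k' \neq k$, and from the original spheres $L^0_{k'}=V_{k'}$, which may be assumed supported away from the boundary collars. Among the Lagrangians in the $k$-th handle region, \eqref{eq:zerod} gives $\HF^*(W^2_k,L^1_k)=0$, so the only $\tilde{V}_{i_r}$ that contributes to the right-hand side of \eqref{eq:ineq} is $\tilde{V}_{i_r}=L^2_k$, which forces $r=2$. The inequality therefore reduces to
\begin{equation*}
\dim \HF^*(W^2_k,\tilde{V}_{3k-2}) \leq \dim \HF^*(W^2_k,L^2_k) \cdot \dim \HF^*(L^2_k,\tilde{V}_{3k-2}).
\end{equation*}
By Lemma \ref{th:dim-grow} the left side equals $\rho \cdot \dim \tilde{H}^*(U_-;\bK)$, while \eqref{eq:oned}, \eqref{eq:back1} and Lemma \ref{th:classical-u} identify the right side with $\dim \tilde{H}^*(U_-;\bK)$. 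For $\rho \geq 2$ and $\tilde{H}^*(U_-;\bK) \neq 0$ this is a contradiction, hence the leading thimble has vanishing wrapped Floer cohomology.

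In cases (b) and (c) the same strategy applies, and is in fact easier. For (b) I again take $W=W^2_k$; in the reduced sub-fibration the only Lagrangian with nonzero $\HF^*$ against $W^2_k$ is $L^2_k$ itself, which sits at the reindexed position $i_1=1$ and cannot recur as $i_r$ with $r>1$. Hence the right-hand side of \eqref{eq:ineq} is zero while the left-hand side equals $\dim \HF^*(W^2_k,L^2_k)=1$, a contradiction. For (c) I use the cotangent-fibre disc $\tilde{W}^1_k \subset D^{2n}$ introduced in the proof of Lemma \ref{th:rho-is-1}, which intersects $L^1_k$ once and is disjoint from all other $L^{j}_{k'}$; the same zero-versus-one collision gives the contradiction. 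With vanishing of the leading thimble established in all three cases, Property \ref{th:iterate} strips that cycle and the induction proceeds. After $3m$ steps the list of vanishing cycles is empty, completing the proof.

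The main obstacle is the disjointness bookkeeping needed to collapse \eqref{eq:ineq} to a single term: I must justify that $W^2_k$ and $\tilde{W}^1_k$ avoid every Lagrangian associated with indices $k' \neq k$, as well as verifying that Dehn twisting $L^0_k$ by $L^2_k$ and $L^1_k$ does not create unexpected intersections with the discs used for other groups. This follows because the pairs of handle attachments are performed in pairwise disjoint boundary collars, the original vanishing cycles $V_{k'}$ can be perturbed into the interior of $M$, and the Dehn twists $\tau_{L^j_k}$ are localized near $L^j_k$. Once these separation facts are in place, the combinatorial sum in \eqref{eq:ineq} trivializes as described, and the core numerical estimate $\rho \cdot \dim \tilde{H}^*(U_-;\bK) \nleq \dim \tilde{H}^*(U_-;\bK)$ for $\rho \geq 2$ drives the argument.
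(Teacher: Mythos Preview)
Your proof is correct and follows the paper's approach closely: the same reduction via Property~\ref{th:open-closed}, the same test discs $W^2_k$ and the cotangent-fibre disc for the three cycle types, and the same numerical contradiction $\rho\cdot\dim\tilde{H}^*(U_-;\bK) > \dim\tilde{H}^*(U_-;\bK)$ via Property~\ref{th:hull}. The one organizational difference is that the paper, after stripping the first three cycles, invokes a convexity argument together with Property~\ref{th:meta} to discard the $k=1$ double handle from the ambient fibre and then inducts on $m$; you instead keep the ambient fibre fixed as $\tilde{M}$ and peel off all $3m$ cycles one by one, relying directly on the geometric disjointness of the different handle regions to collapse the sum in~\eqref{eq:ineq}. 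Both are valid: the paper's route packages the disjointness bookkeeping into a single reduction step, while yours trades that for a longer but entirely elementary induction that avoids appealing to Property~\ref{th:meta} at this stage.
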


\begin{proof}
In view of Property \ref{th:open-closed}, all we need to show is that $\HW^*(\tilde{\Delta}_i, \tilde{\Delta}_i) = 0$ for all $i$. Start with $i = 1$. The fibre $\tilde{M}$ contains the Lagrangian disc $\tilde{W} = W^2_1$, which is disjoint from $\tilde{V}_i$ for all $i>2$ and satisfies
\begin{equation}
\begin{aligned}
& \dim_\bK \HF^*(\tilde{W},\tilde{V}_1) = \rho \, \dim_\bK \tilde{H}^*(U_-;\bK) \\ & >
\dim_\bK \HF^*(\tilde{W},\tilde{V}_2) \cdot \dim_\bK \HF^*(\tilde{V}_2,\tilde{V}_1) =
\dim_\bK \tilde{H}^*(U_-;\bK) \cdot 1.
\end{aligned}
\end{equation}
Hence, by Property \ref{th:hull}, $\HW^*(\tilde{\Delta}_1,\tilde{\Delta}_1) = 0$. In view of Property \ref{th:iterate}, we can do our remaining computations in the total space of the Lefschetz fibration with fibre $\tilde{M}$ and vanishing cycles $(\tilde{V}_2,\dots,\tilde{V}_{3m})$. But the same Lagrangian disc $\tilde{W}$ intersects the first of these cycles in a point, and is disjoint from all the others. Therefore, the wrapped Floer cohomology of the corresponding Lefschetz thimble is zero. The same holds for the next vanishing cycle, by using another Lagrangian disc as in Lemma \ref{th:rho-is-1}. Having now removed the first three vanishing cycles, we note that none of the remaining vanishing cycle intersect  the double handle attached to $W^0_1$.  Since the complement of the handle is a Liouville subdomain, standard convexity arguments and Property \ref{th:meta} imply that all computations can now be performed in the space obtained by attaching $m-1$ double handles to $M$.

At this point, we have reduced the statement to the corresponding one where the original fibration has fibre $M$ and vanishing cycles $(V_2,\dots,V_m)$. If $m = 1$, there are no vanishing cycles left, so the total space of the last-mentioned fibration is $D^2 \times M$ with the corners rounded off, hence its symplectic cohomology is zero by \cite{oancea04}. Otherwise, we argue by induction on $m$.
\end{proof}

\begin{theorem} \label{th:vanish-0}
Let $E$ be the total space of a Lefschetz fibration with fibre $M$ and vanishing cycles $(V_1,\dots,V_m)$. Assume that $\dim(E) = 2n+2 \geq 6$, and that Assumption \ref{th:co-disc} holds. Then there is another Liouville domain $\tilde{E}$, which is almost symplectomorphic to $E$, and such that $\SH^*(\tilde{E})$ vanishes for any coefficient field $\bK$.
\end{theorem}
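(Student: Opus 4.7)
The plan is to apply the homologous recombination construction from the previous subsection with parameter $\rho \geq 2$ and a carefully chosen topological decomposition $S^{n-1} = U_+ \cup U_-$; Lemma \ref{th:rho-2} will then deliver the required vanishing uniformly in $\bK$.

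The one new input needed is the choice of $U_-$: it has to satisfy the standing constraint $\chi(U_-) = 1$ (so that the construction is well-defined and produces a total space almost symplectomorphic to $E$ via Lemma \ref{th:disjoin} and the lemma immediately preceding Theorem \ref{th:vanish-0}), together with $\tilde H^*(U_-;\bK) \neq 0$ for every coefficient field $\bK$ simultaneously (so that Lemma \ref{th:rho-2} applies for every $\bK$). A concrete choice that works for $n \geq 3$ is
\begin{equation*}
U_- = D^{n-1} \sqcup (S^1 \times D^{n-2}),
\end{equation*}
with both pieces smoothly and disjointly embedded in $S^{n-1}$ (the complement of a small disc in $S^{n-1}$ is $\bR^{n-1}$, which easily accommodates a standard solid torus when $n \geq 3$). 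The first piece contributes $\chi = 1$, the second contributes $\chi = 0$, so $\chi(U_-) = 1$; while $\tilde H^0(U_-;\bK) \cong \bK$ and $\tilde H^1(U_-;\bK) \cong \bK$ independently of $\bK$.

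With this data fixed, I would carry out the double handle attachment simultaneously along each of the Lagrangian discs $W_1,\dots,W_m$ supplied by Assumption \ref{th:co-disc}, using the same $U_\pm$ and the same auxiliary function $g$ at every handle, yielding the fibre $\tilde M$ together with its triples $(L^0_i,L^1_i,L^2_i)$ and the co-core discs $W^2_i$. Choosing $\rho \geq 2$ and the vanishing cycles $(\tilde V_1,\dots,\tilde V_{3m})$ prescribed in \eqref{eq:tilde-collection}, one forms $\tilde E$. The lemma immediately preceding the theorem gives $\tilde E$ almost symplectomorphic to $E$ for every $\rho$. Since $\tilde H^*(U_-;\bK) \neq 0$ for every $\bK$ by construction, Lemma \ref{th:rho-2} applied with $\rho \geq 2$ then forces $\SH^*(\tilde E;\bK) = 0$ for every coefficient field $\bK$.

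The main (indeed essentially the only) obstacle is the topological one: simultaneously arranging $\chi(U_-) = 1$ and universal non-vanishing of $\tilde H^*(U_-;\bK)$ inside $S^{n-1}$. For $n \geq 3$ the disc-plus-solid-torus choice above is completely elementary and uniform in $\bK$; the borderline case $n = 2$ (where every codimension-zero piece of $S^1$ with $\chi = 1$ is a single contractible arc, forcing $\tilde H^*(U_-;\bK) = 0$) is genuinely awkward and either requires a separate argument or is handled by first stabilizing the Lefschetz fibration — say by replacing the fibre with a stabilization whose vanishing cycles still satisfy Assumption \ref{th:co-disc} — so as to effectively increase $n$ before the recombination step is carried out.
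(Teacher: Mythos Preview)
For $n \geq 3$ your argument is essentially identical to the paper's: your $U_- = D^{n-1} \sqcup (S^1 \times D^{n-2})$ is homotopy equivalent to $\{\mathrm{pt}\} \sqcup S^1$, which is precisely the paper's choice, and the rest of the argument (apply Lemma~\ref{th:rho-2} with $\rho \geq 2$, invoke the almost-symplectomorphism lemma) matches.

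The gap is at $n = 2$, which is part of the theorem since $\dim(E) \geq 6$ allows $n = 2$. You correctly observe that no codimension-zero $U_- \subset S^1$ can have both $\chi(U_-) = 1$ and $\tilde H^*(U_-;\bK) \neq 0$. However, your proposed remedies do not work. Stabilization in the sense of Lemma~\ref{th:cancellation} changes the fibre $M$ by a Weinstein handle of the \emph{same} dimension and adds a vanishing cycle; it does not increase $n$. Any operation that genuinely raises the fibre dimension would change $\dim(E)$, and the resulting $\tilde E$ would no longer be almost symplectomorphic to the original $E$. So the ``stabilize to increase $n$'' route is a dead end, and ``a separate argument'' is not an argument.

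The paper's fix for $n = 2$ is to exploit a fact already recorded in Section~\ref{subsec:almost}: when $n = 2$, the square $\tau_L^2$ of any Dehn twist is isotopic to the identity through almost symplectomorphisms. This means that changing $\rho$ by an even integer does not affect the almost-symplectomorphism type of $\tilde E$, so the conclusion of Lemma~\ref{th:homotopy} holds for $\rho$ of fixed parity even when $\chi(U_-) \neq 1$. One then takes $U_- \subset S^1$ to be two disjoint arcs (so $\tilde H^0(U_-;\bK) \cong \bK$ for every $\bK$, but $\chi(U_-) = 2$), and sets $\rho = 3$: Lemma~\ref{th:rho-2} gives vanishing of $\SH^*(\tilde E)$, while $\tilde E$ is almost symplectomorphic to the $\rho = 1$ version, which is $E$ by Lemma~\ref{th:rho-is-1}.
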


\begin{proof}
For $n>2$ this is a straightforward consequence of the previous discussion: we can choose $U_- \subset S^{n-1}$ to be homotopy equivalent to the disjoint union of a point and a circle. This satisfies \eqref{eq:euler} but obviously has $\tilde{H}^*(U_-;\bK) \neq 0$ for any $\bK$. Hence, constructing $\tilde{E}$ as before, with $\rho = 2$ for instance, yields the desired result by Lemma \ref{th:rho-2}.

The remaining case $n = 2$ requires only a small variation. Namely, take $U_- \subset S^1$ to consist of two disjoint intervals. One can then choose the extension \eqref{eq:extension} to be Morse and have a single hyperbolic critical point. In fact, the double handle construction then just consists of attaching a Weinstein handle to the boundary of the given disc, and then another one to the co-core sphere of the first handle. Lemma \ref{th:rho-2} still applies, but since \eqref{eq:euler} is violated, Lemma \ref{th:disjoin} and all the topological arguments built on it fail. However, in this particular dimension we know that squares of Dehn twists are isotopic to the identity as almost symplectomorphisms. Hence, changing $\rho$ by an even number does not affect the isomorphism type of $\tilde{E}$ as an almost symplectic manifold. Taking $\rho = 3$ then yields the desired result.
\end{proof}

To refine our result, we want to make use of a particular choice of $U_\pm$. Assume that $n \geq 5$, and fix a natural number $q$. Consider the Moore space obtained by attaching a 2-cell to the circle along a degree $q$ map $S^1 \rightarrow S^1$. We claim that this can be embedded into $S^{n-1}$. For $n > 5$ this is clear, since any generic map is an embedding. For $n = 5$, take explicitly the map from the unit disc $D^2 \subset \bC$ to $\bC^2$ given by $z \mapsto ((1-|z|^2)z,z^q)$. This is an immersion, an embedding in the interior, and a $q$-fold cover along the boundary. Hence, the image is a copy of this Moore space embedded in $\bC^2 \subset S^4$. Given that, take $U_-$ to be the boundary of a regular neighbourhood of the image of the Moore space. Clearly, this satisfies \eqref{eq:euler} and
\begin{equation}
\tilde{H}^*(U_-;\bK) = \begin{cases} 0 & \text{if $\mathrm{char}(\bK)$ does not divide $q$,} \\
\tilde{H}^*(S^1 \vee S^2;\bK) & \text{if $\mathrm{char}(\bK)$ divides $q$.}
\end{cases}
\end{equation}
Therefore, an application of Lemmas \ref{th:all-rho} and \ref{th:rho-2} yields the following:

\begin{theorem} \label{th:selective-vanishing}
Let $E$ be the total space of a Lefschetz fibration with fibre $M$ and vanishing cycles $(V_1,\dots,V_m)$. Assume that $\dim(E) \geq 12$, and that Assumption \ref{th:co-disc} holds. Fix an integer $q \geq 1$. Then there is another Liouville domain $\tilde{E}$ which is almost symplectomorphic to $E$, such that the following holds. If $\bK$ is a field whose characteristic divides $q$, the symplectic cohomology of $\tilde{E}$ with coefficients in $\bK$ vanishes. On the other hand, if the characteristic of $\bK$ does not divide $q$ (this includes characteristic $0$), the symplectic cohomology of $\tilde{E}$ with coefficients in $\bK$ vanishes if and only if the same holds for the original manifold $E$. \qed
\end{theorem}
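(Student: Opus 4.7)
The approach is to rerun the homologous recombination construction of this subsection, using a more delicately chosen decomposition $(U_+,U_-)$ of $S^{n-1}$ than in the proof of Theorem \ref{th:vanish-0}. Since $\dim(E) = 2n+2 \geq 12$ we have $n \geq 5$, and the target is to engineer $U_-$ so that $\chi(U_-) = 1$ while $\tilde{H}^*(U_-;\bK)$ vanishes precisely when $\mathrm{char}(\bK)$ does not divide $q$.

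The natural candidate is a Moore space $X_q$ obtained by attaching a $2$-cell to $S^1$ via a degree $q$ map. First I would embed $X_q$ smoothly into $S^{n-1}$. For $n \geq 6$ this is immediate from general position, since $\dim X_q = 2 < n-1$. For the critical case $n = 5$, I would use the explicit map $D^2 \to \bC^2$, $z \mapsto ((1-|z|^2)z,\, z^q)$, which is injective on the open disc and a $q$-fold covering on the boundary, landing in $\bC^2 \subset S^4$. I would then set $U_-$ to be the closure of a regular tubular neighborhood of this embedded $X_q$ and $U_+$ to be the closure of the complement, so that both are codimension zero submanifolds of $S^{n-1}$ meeting along their common boundary.

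Two properties then need to be checked. Since $U_-$ deformation retracts onto $X_q$, which has exactly one cell in each of dimensions $0,1,2$, the Euler characteristic computation gives $\chi(U_-) = 1$, verifying \eqref{eq:euler}. From the cellular complex of $X_q$ with $\bK$-coefficients — whose only nontrivial differential is multiplication by $q$ from the $2$-cell to the $1$-cell — one immediately obtains
\begin{equation*}
\tilde{H}^*(U_-;\bK) \iso \begin{cases} 0 & \text{if $\mathrm{char}(\bK) \nmid q$,} \\ \tilde{H}^*(S^1 \vee S^2;\bK) & \text{if $\mathrm{char}(\bK) \mid q$.} \end{cases}
\end{equation*}

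Finally, I would fix any $\rho \geq 2$ and feed this $(U_+,U_-)$ into the recombination construction, producing a Liouville domain $\tilde E$. The almost symplectomorphism $\tilde E \htp E$ is provided by the earlier lemma establishing this for all $\rho$. When $\mathrm{char}(\bK) \nmid q$, Lemma \ref{th:all-rho} applies and yields $\SH^*(\tilde E) = 0 \iff \SH^*(E) = 0$; when $\mathrm{char}(\bK) \mid q$, Lemma \ref{th:rho-2} gives unconditional vanishing of $\SH^*(\tilde E)$. Since all of the hard Floer theory is already packaged in those two lemmas, the only substantive step to supply is the Moore space embedding, with the mildly delicate point being the critical dimension $n = 5$ handled by the explicit formula above.
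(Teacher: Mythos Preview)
Your proposal is correct and matches the paper's argument essentially verbatim: embed the Moore space $X_q$ into $S^{n-1}$ (with the explicit map $z\mapsto((1-|z|^2)z,z^q)$ in the borderline case $n=5$), take $U_-$ to be its regular neighbourhood, verify $\chi(U_-)=1$ and compute $\tilde H^*(U_-;\bK)$, then invoke Lemmas~\ref{th:all-rho} and~\ref{th:rho-2}. (The paper's text literally says ``the boundary of a regular neighbourhood'', which appears to be a slip for the closed regular neighbourhood itself---your formulation is the one that actually has $\chi=1$ and the stated cohomology.)
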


\begin{example}
Continuing our elementary discussion in Example \ref{th:affine}, the hypotheses of Theorems \ref{th:vanish-0} and \ref{th:selective-vanishing} hold for the important case of affine varieties. We will only give a brief summary, referring to \cite[Sections 16 and 19]{seidel04} and \cite{fukaya-seidel-smith07} for details. Let $X \subset \bC^N$ be a smooth complex affine algebraic variety. Then, a generic linear map $\pi: X \rightarrow \bC$ is a Lefschetz fibration. Denote by $X'$ a smooth fibre of that fibration, which is a hyperplane section of $X$. We can then in turn take another generic linear map $\pi': X' \rightarrow \bC$, which is a Lefschetz fibration. Let $(V_1,\dots,V_m)$ be vanishing cycles for $\pi$. Each such cycle can then be represented as a matching cycle for $\pi'$, associated to a matching path $c_i$ in the plane which joins two critical values of that map. For each $1 \leq i \leq m$, choose a properly embedded half-infinite path $d_i$, whose endpoint is also one of the endpoints of $c_i$, and which otherwise does not intersect $d_i$ or any critical value of $\pi'$ (see Figure \ref{fig:paths} for an illustration). Let $W_i \subset X'$ be the Lefschetz thimble associated to $d_i$. Possibly after a preliminary isotopy, one can arrange that $W_i$ intersects $V_i$ transversely and in a single point. After truncating $X'$ to a compact Liouville domain, one finds that Assumption \ref{th:co-disc} is satisfied.
\end{example}
\begin{figure}
\begin{center}
\epsfig{file=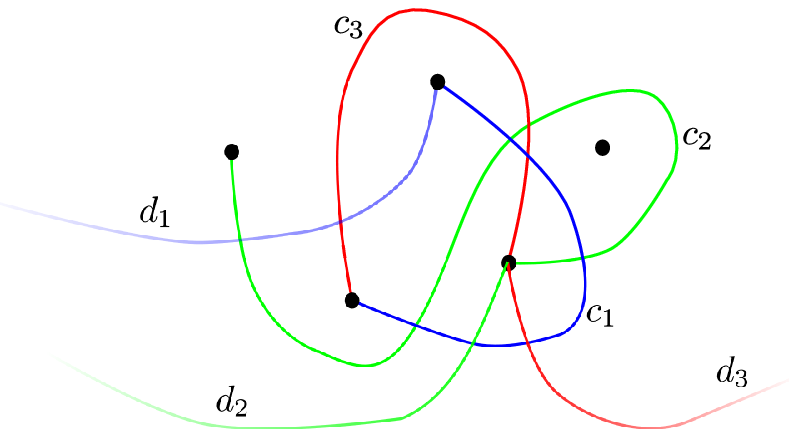}
\caption{\label{fig:paths}}
\end{center}
\end{figure}

With this in mind, Theorem \ref{th:vanish-0} immediately implies Theorem \ref{th:1}, and Theorem \ref{th:selective-vanishing} similarly implies Theorem \ref{th:2}.

\begin{remark}
It is a natural question whether the construction from Theorem \ref{th:selective-vanishing} can be thought of as a localization process away from the primes that divide $q$, or ``tensoring with $\bZ[1/q]$'', in analogy with the well-known notion in homotopy theory \cite{sullivan}. As a possible step towards an answer, take any coefficient field $\bK$ whose characteristic does not divide $q$. Then, the $A_\infty$-structure on $\scrB$ is independent of $\rho$. Hence, by appealing to the full strength of the results about symplectic cohomology from \cite[Appendix]{bourgeois-ekholm-eliashberg09}, one concludes that
\begin{equation} \label{eq:same}
\SH^*(\tilde{E}) \iso \SH^*(E).
\end{equation}
Ongoing work of Bourgeois-Ekholm-Eliashberg aims to understand the product structure on symplectic cohomology from a similar point of view, and one can hope to use that to show that \eqref{eq:same} is an isomorphism of rings. The next question would then be whether the wrapped Fukaya categories of $\tilde{E}$ and $E$, with coefficients in the same field $\bK$, are equivalent at least in some derived sense. A positive answer to this would seem to require an $A_\infty$-refinement of the formula for the wrapped Floer cohomology of Lefschetz thimbles, as well as a generation statement (the methods from \cite{gen} may be helpful in the latter step). As a point of caution, note that one cannot hope for a meaningful relation between the more classical Fukaya categories, whose objects are exact closed Lagrangian submanifolds (since vanishing of symplectic cohomology with $\bZ/2$ coefficients already implies that there cannot be any such submanifolds).
\end{remark}

\section{Nonstandard symplectic structures}

\subsection{Symplectic mapping tori\label{subsec:mclean}}
Having found a method for making symplectic cohomology vanish, we will now balance this by reviewing McLean's construction \cite{mclean09} of nonstandard Liouville structures on flat space which have nonvanishing symplectic cohomology. Along the way we modify some of the steps, replacing the algebro-geometric notion of Kaliman modification by the similar but purely symplectic one of Weinstein two-handle attachment, and removing the reliance on the detailed Conley-Zehnder index computations from \cite{ustilovsky99}.

Let $M$ be a Liouville domain of dimension $2n$. Let $\phi: M \rightarrow M$ be a diffeomorphism which is the identity near $\partial M$, and such that $\phi^*\theta_M - \theta_M$ is the derivative of a function $k$ vanishing near the boundary. Without essential loss of generality (since one can always rescale the symplectic form) we may assume that $|k(x)| < 1$ everywhere. The symplectic mapping torus of $\phi$ is the manifold with corners
\begin{equation} \label{eq:mapping-torus}
[-1,1] \times \bR \times M / \sim, \text{ where } (s,t,x) \sim (s,t-1,\phi(x)).
\end{equation}
We equip this with the one-form
\begin{equation} \label{eq:theta-cone}
\theta_M + s\, dt + d(t\, k) = \theta_M + (s + k) dt + t \, dk,
\end{equation}
which is invariant under the equivalence relation in \eqref{eq:mapping-torus}, and whose exterior derivative is $\omega_M + ds \wedge dt$. By definition, $M$ is a symplectic fibration over $[-1,1] \times S^1$ with a flat symplectic connection, whose holonomy around the circle is $\phi$. The Liouville vector field dual to \eqref{eq:theta-cone} is
\begin{equation} \label{eq:z-torus}
Z_M + (s + k) \partial_s - t X_k,
\end{equation}
where $X_k$ is the Hamiltonian vector field of $k$ (our convention is that $\omega_M(\cdot,X_k) = dX_k$).
The vector field \eqref{eq:z-torus} points strictly outwards along both boundary faces $\{x \in \partial M\}$ and $\{s = \pm 1\}$, the latter thanks to the assumed bound on $k$.

We now want to round off the corners, in order to obtain an actual Liouville domain. Recall that by integrating the Liouville vector field, one gets a collar neighbourhood $N \subset M$ which comes with a preferred diffeomorphism $N \rightarrow [1/2,1] \times \partial M$. Denote the first component of that map by $r: N \rightarrow [1/2,1]$. By construction, $\theta_M|N$ agrees with the pullback of $r(\theta_M|\partial M)$ by that diffeomorphism. Again without essential loss of generality (since one can always deform $\phi$ by conjugating it with the Liouville flow), we may assume that $\phi = \mathrm{id}$ and $k = 0$ on $N$. Define
\begin{equation}
T = \{(s,t,x) \in [-1,1] \times \bR \times M \;:\; \text{either $x \notin N$ or $r(x) \leq \chi(s)$}\}/ \sim,
\end{equation}
where the equivalence relation is as before, and $\chi: [-1,1] \rightarrow [1/2,1]$ is a function with the following properties:
\begin{itemize} \parskip1em
\item $\chi(\pm 1) = 1/2$, $\chi(0) = 1$;
\item $\chi$ is continuous everywhere, and smooth on $(-1,1)$;
\item $\chi' > 0$ on $(-1,0)$, $\chi' < 0$ on $(0,1)$, and $\chi''(0) < 0$;
\item Restricting $\chi$ to $[-1,0]$, all derivatives of its inverse $\chi^{-1}: [1/2,1] \rightarrow [-1,0]$ vanish at the point $1/2$; and the same for the other interval $[0,1]$.
\end{itemize}
Equip $T$ with the restriction of the exact symplectic structure introduced above, denoting the result by $\theta_T$, $\omega_T$ and $Z_T$. The last-mentioned property of $\chi$ ensures that $\partial T$ is smooth. It is easy to see that $Z_T$ points outwards along $\partial T$. In fact, that boundary consists of two parts. One is where $s = \pm 1$ and $x \in \overline{M \setminus N}$. At such boundary points, the characteristic foliation is spanned by the parallel transport vector field $\partial_t$, and the Reeb vector field is
\begin{equation} \label{eq:reeb-1}
R_{\partial T} = \frac{\partial_t}{s + k}.
\end{equation}
Note that this points in positive $\partial_t$ direction if $s = +1$, and in negative $\partial_t$ direction if $s = -1$. The other part of the boundary is where $r(x) = \chi(s)$. At such boundary points, the characteristic foliation is spanned by $R_{\partial M} - \chi'(s) \partial_t$, where $R_{\partial M}$ is the Reeb vector field for $M$; and
\begin{equation} \label{eq:reeb-2}
R_{\partial T} = \frac{R_{\partial M} - \chi'(s) \partial_t}{\chi(s) - s\chi'(s)}.
\end{equation}
Again, if we use the projection $\pi: T \rightarrow [-1,1] \times S^1$ to the $(s,t)$ variables, then the image of \eqref{eq:reeb-2} points in positive $\partial_t$ direction if $s>0$, in negative direction if $s<0$, and is zero if $s = 0$. In the last-mentioned case, $R_{\partial T} = R_{\partial M}$. We conclude:

\begin{lemma} \label{th:reeb-reeb}
Closed Reeb orbits on $\partial T$ whose homology class lies in the kernel of $\pi_*: H_1(T) \rightarrow H_1([-1,1] \times S^1) = \bZ$ correspond bijectively to pairs consisting of a point $t \in S^1$ and a closed Reeb orbit on $\partial M$. \qed
\end{lemma}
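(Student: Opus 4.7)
The plan is to analyze the Reeb dynamics on the two constituent pieces of $\partial T$ separately using the explicit formulas \eqref{eq:reeb-1} and \eqref{eq:reeb-2}, extracting from the second formula the fact that $s$ is a first integral, and then ruling out pathological orbits at the junction between the two pieces.

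First I would dispose of the piece $\{s=\pm 1,\ x\in\overline{M\setminus N}\}$: by \eqref{eq:reeb-1}, $R_{\partial T}$ there is a sign-definite multiple of $\partial_t$, so every closed orbit is a (possibly iterated) $t$-circle through a fixed $(\pm 1,x)$ and therefore has nonzero $t$-winding, giving $\pi_\ast[\gamma]\neq 0$. Next, on the piece $\{r(x)=\chi(s)\}$, the decisive observation is that $R_{\partial T}$ in \eqref{eq:reeb-2} has vanishing $\partial_s$-component, so $s$ is conserved along the flow. Fixing $s=s_0$ and working in collar coordinates $(s_0,t,y)$ with $y\in\partial M$, the flow becomes $\dot y=R_{\partial M}/(\chi(s_0)-s_0\chi'(s_0))$ and $\dot t=-\chi'(s_0)/(\chi(s_0)-s_0\chi'(s_0))$. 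A closed orbit then forces $y(\tau)$ to trace a closed Reeb orbit of $\partial M$ of (possibly multiply covered) period $kT$, and a straightforward computation gives $\pi_\ast[\gamma]=-kT\chi'(s_0)\in\bZ$. Demanding $\pi_\ast[\gamma]=0$ therefore forces $\chi'(s_0)=0$, and by the prescribed properties of $\chi$ this occurs only at $s_0=0$. Since $\chi(0)=1$ and $\chi'(0)=0$, \eqref{eq:reeb-2} degenerates at $s_0=0$ to $R_{\partial T}=R_{\partial M}$, so the orbit is simply $\tau\mapsto(0,t_0,y(\tau))$ for a closed Reeb orbit $y$ of $\partial M$ and some fixed $t_0\in S^1$. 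The assignment $\gamma\leftrightarrow(t_0,y)$ is then manifestly a bijection.

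The subtle step I anticipate as the main obstacle is ensuring that no closed orbit hides on, or crosses through, the junction $\{s=\pm 1,\ r(x)=1/2\}$ where the two pieces meet, which would fall outside both of the regimes analysed above. The infinite-order flatness of $\chi^{-1}$ at $r=1/2$ is precisely what handles this: as $s_0\to\pm 1$ along the second piece one has $\chi(s_0)\to 1/2$ and $|\chi'(s_0)|\to\infty$ with sign dictated by the monotonicity of $\chi$, and the quotient \eqref{eq:reeb-2} limits to $\pm\partial_t$, exactly matching the value of \eqref{eq:reeb-1} at $s=\pm 1$ with $k=0$ (the junction lies in the collar, where $\phi=\mathrm{id}$ and $k\equiv 0$). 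Hence $R_{\partial T}$ extends smoothly across the junction and $s$ remains a first integral in a neighbourhood of it, which both confines each orbit to a single piece and ensures that any orbit sitting on the junction itself is a pure $\pm\partial_t$-loop and therefore again excluded from $\ker\pi_\ast$. Once this continuity-at-the-corner point is pinned down, the bijection claimed by the lemma follows immediately from the two cases above.
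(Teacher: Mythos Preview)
Your argument is correct and is essentially a more explicit version of the paper's own proof, which consists only of the paragraph preceding the lemma: the paper notes that $\pi_*R_{\partial T}$ is a multiple of $\partial_t$ whose sign agrees with that of $s$ (so in particular $s$ is conserved), and that at $s=0$ one has $R_{\partial T}=R_{\partial M}$. Your additional care about matching the two formulas across the junction is a valid concern that the paper leaves implicit in the smoothness of $\partial T$; one minor imprecision is that orbits on the $\{s=\pm 1\}$ piece need not sit at a fixed $x$ (the monodromy $\phi$ acts as $t$ winds), but your conclusion about nonzero $t$-winding is unaffected.
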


We will now introduce a number of additional assumptions.
\begin{itemize} \parskip1em
\item $c_1(M) = 0$ and $H^1(M) = 0$. This allows us to assign Conley-Zehnder indices to closed Reeb orbits in $M$, thereby equipping $\SH^*(M)$ with a canonical integer grading. The same then carries over to $T$;
\item All closed Reeb orbits on $\partial M$ are nondegenerate in the Morse-Bott sense. Moreover,  the space of closed Reeb orbits only has finitely many connected components with a given Conley-Zehnder index;
\item Finally, $M$ contains a closed exact Lagrangian submanifold, which is oriented and {\it Spin}.
\end{itemize}
Using these, we will prove:

\begin{lemma}
The summand $\SH^*(T)_0 \subset \SH^*(T)$ which corresponds to free loops lying in the kernel of $\pi_*: H_1(T) \rightarrow \bZ$ is nonzero, and finite-dimensional in each degree.
\end{lemma}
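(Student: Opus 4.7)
The plan is to compute $\SH^*(T)_0$ via a chain model built from a Hamiltonian which is $C^2$-small Morse in the interior of $T$ and linear of generic slope in the radial coordinate near $\partial T$. The Floer complex splits by free homotopy classes of loops, and I would restrict attention to the direct summand $\mathit{SC}^*(T)_0$ generated by classes in $\ker \pi_*$; this subcomplex is preserved by the differential because Floer cylinders lie in a fixed free loop class.

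For finite-dimensionality in each degree, the interior (constant-orbit) contribution occupies only a bounded range of degrees and is finite in each. Near $\partial T$, the remaining generators are closed Reeb orbits lying in $\ker \pi_*$. By Lemma \ref{th:reeb-reeb}, combined with $\chi'(0)=0$ and formula \eqref{eq:reeb-2}, these are precisely the circles $\{0\}\times S^1_t \times \gamma$ for $\gamma$ a closed Reeb orbit of $\partial M$, and at $s=0$ the Reeb field of $\partial T$ restricts to $R_{\partial M}$, so each such circle is a Morse--Bott $S^1$-family of orbits. A standard Morse--Bott perturbation replaces each family by two nondegenerate orbits whose Conley--Zehnder indices differ by $1$ and equal $\mu_{\mathrm{CZ}}(\gamma)$ up to a fixed shift depending only on $n$ and the sign of $\chi''(0)$: the transverse part of the linearized return map in the $(\partial_s,\partial_t)$-plane is a shear with off-diagonal entry proportional to $\chi''(0)$ times the period of $\gamma$, and the contribution to the index is independent of $\gamma$. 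Combined with the hypothesis that the Reeb orbit space of $\partial M$ has only finitely many components of any given CZ index, this bounds each graded piece of $\mathit{SC}^*(T)_0$, and hence of $\SH^*(T)_0$, in dimension.

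For non-vanishing, let $L\subset M$ be the given closed, exact, oriented, Spin Lagrangian, chosen in the interior away from the collar $N$. Pick $t_0\in S^1$ away from the identification locus $\{0\sim 1\}$ and set $L' = \{s=0\}\times\{t_0\}\times L \subset T$. From \eqref{eq:theta-cone} and the vanishing of $dt$ along $L'$ one computes $\theta_T|_{L'} = \theta_M|_L$, which is exact, so $L'$ is a closed exact Lagrangian, inheriting orientation and Spin structure from $L$. Hence $\HF^*(L',L';\bK) \iso H^*(L;\bK) \neq 0$ as a unital ring. The closed-open (PSS) map $\SH^*(T) \to \HF^*(L',L')$ is a unital ring homomorphism; since $L'$ sits in a single fibre of $\pi$, all loops on $L'$ have class in $\ker \pi_*$, so the map factors through $\SH^*(T)_0$ and sends the unit to the unit. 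Nonvanishing of the target's unit forces $\SH^*(T)_0 \neq 0$.

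The step I expect to be most delicate is the Conley--Zehnder bookkeeping for the Morse--Bott families at $s=0$: one must verify that the transverse $(\partial_s,\partial_t)$-contribution to the index is a uniform shift, independent of $\gamma$, so that the finiteness hypothesis on the orbit space of $\partial M$ propagates cleanly to $\partial T$. As the opening paragraph of \S\ref{subsec:mclean} notes, precisely this kind of computation is what the authors wish to streamline relative to \cite{ustilovsky99}.
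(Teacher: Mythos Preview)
Your finite-dimensionality argument follows the same line as the paper's (Morse--Bott analysis of the Reeb orbits at $s=0$, propagating the index-finiteness hypothesis from $\partial M$ to $\partial T$), with one imprecision: the orbits on $\partial M$ are only assumed Morse--Bott nondegenerate, so a given $\gamma$ may sit in a positive-dimensional component $C_j$ --- as indeed happens in the Milnor-fibre application of \S\ref{subsec:milnor}, where the Reeb flow is a circle action --- and the relevant families on $\partial T$ are $S^1 \times C_j$ rather than circles. The paper handles this by invoking the Morse--Bott spectral sequence with $E_1$-page built from $H^*(S^1;\bK) \otimes H^*(C_j;\xi_j)$; your perturbation argument is easily repaired by replacing ``two nondegenerate orbits'' with ``one orbit per critical point of a Morse function on $S^1 \times C_j$,'' the indices still lying in a bounded window around $\mu_j$ since $\dim C_j \leq 2n-1$.

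Your nonvanishing argument takes a genuinely different route. You place the \emph{closed} exact Lagrangian $L' = \{0\} \times \{t_0\} \times L$ in a single fibre of $\pi$ and use unitality of the closed-open map $\SH^*(T) \to \HF^*(L',L') \cong H^*(L;\bK)$. The paper instead uses the Lagrangian \emph{with Legendrian boundary} $W = [-1,1] \times \{0\} \times L$, observes via \eqref{eq:reeb-1} that every nontrivial Reeb chord of $\partial W$ has nonzero image under $\pi_*: H_1(T,W) \to \bZ$, so that $\HW^*(W,W)$ contains $H^*(W;\bK)$ as a direct summand, and then concludes from the $\SH^*(T)$-module structure. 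Both routes ultimately show $\SH^*(T) \neq 0$ and then use that the unit lies in $\SH^*(T)_0$; yours is the more elementary, avoiding wrapped Floer cohomology altogether, while the paper's Reeb-chord count dovetails with the boundary analysis already set up for $\partial T$. One minor slip: from \eqref{eq:theta-cone} you get $\theta_T|_{L'} = \theta_M|_L + t_0\,dk|_L$, not just $\theta_M|_L$; the extra term is exact, so the conclusion that $L'$ is exact is unaffected.
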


\begin{proof}
Let $C_j$, $j \geq 1$, be the connected components of the space of parametrized closed Reeb orbits on $\partial M$, ordered so that the length is nondecreasing. Each $C_j$ has a Conley-Zehnder index $\mu_j$, and also carries a local system $\xi_j$ with fibre $\bK$ and holonomy $\pm 1$ (this is important for sign reasons, compare the issue of ``bad orbits'' in contact homology). Inspection of \eqref{eq:reeb-2} shows that the corresponding component of the space of closed Reeb orbits on $\partial T$, which is diffeomorphic to $S^1 \times C_j$ by Lemma \ref{th:reeb-reeb}, is again Morse-Bott nondegenerate, and carries the same Conley-Zehnder index and local system. By standard Morse-Bott methods, there is a spectral sequence converging to $\SH^*(T)_0$ whose starting page is
\begin{equation}
E_1^{pq} = \begin{cases}
(H^*(S^1;\bK) \otimes  H^*(C_{-p};\xi_{-p}))^{p+q-\mu_{-p}} & p < 0, \\
H^q(T;\bK) & p = 0, \\
0 & p > 0.
\end{cases}
\end{equation}
By assumption, the sequence $\mu_j$ takes on the same value only finitely many times. This immediately implies that $\SH^*(T)_0$ is finite-dimensional in each degree.

It remains to show that $\SH^*(T)_0$ is nonzero, or equivalently (since that summand contains the identity element for the ring structure) that $\SH^*(T)$ is nonzero. By assumption, there is a closed exact Lagrangian submanifold $L \subset M$. Again without essential loss of generality, we may assume that $\theta_M|L = 0$, and that $L$ is disjoint from the collar neighbourhood $N$ (the second condition can be satisfied by pushing $L$ inwards via the Liouville flow, and the first condition by adding an exact one-form to $\theta_M$). In that case, the product $W = [-1,1] \times \{0\} \times L$ is an exact Lagrangian submanifold of $T$ with Legendrian boundary. In view of \eqref{eq:reeb-1}, the class of any nontrivial Reeb chord for $\partial W$ has nonzero image under the map $\pi_*: H_1(T,W) \rightarrow H_1([-1;1] \times S^1, [-1,1] \times \{0\}) = \bZ$. By definition of wrapped Floer cohomology, this implies that the wrapped Floer cohomology of $W$ contains its ordinary cohomology as a direct summand, hence is nonzero. By arguing as in the easy direction of Property \ref{th:open-closed}, one sees that  $\SH^*(T)$ must therefore be nonzero.
\end{proof}

By \cite[Theorem 2.5]{etnyre05} (for $n = 1$) or \cite[Theorem 12.4.1]{eliashberg-mishachev} (for $n>1$), there is an isotropic embedded loop in $\partial T$ whose image under $\pi_*$ generates $H_1([-1,1] \times S^1)$. Attach a Weinstein two-handle to that loop, forming a new Liouville domain $U \supset T$. From now on we will assume that $n>1$, so that the handle attachment is subcritical.

\begin{lemma} \label{th:frobenius}
Consider $\SH^*(U)$ with coefficients in a finite field of characteristic $p$. Then, the even degree part is a commutative ring in which the equation $x^p = x$ has a finite number $N \geq 2$ of solutions.
\end{lemma}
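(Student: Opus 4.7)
The even-degree part is a commutative ring because the pair-of-pants product on $\SH^*$ is graded-commutative, which collapses to ordinary commutativity in even degrees. The substantive content is the finite count $N \geq 2$ of solutions of $x^p = x$. My strategy is to reduce from $U$ back to $T$ via Cieliebak's subcritical-handle invariance theorem \cite{cieliebak02}, and then use two compatible gradings on $\SH^*(T)$ to force every Frobenius fixed point into a sector that the previous lemma has already pinned down as finite-dimensional.

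Since $n > 1$, the Weinstein $2$-handle used to form $U$ from $T$ is subcritical, so \cite{cieliebak02} produces an isomorphism $\SH^*(U) \cong \SH^*(T)$. I need this as an isomorphism of $\bZ$-graded rings, not merely of vector spaces; this ring-theoretic refinement is the main potential obstacle of the plan, and either appears in the literature or follows by running the same continuation argument compatibly with the TQFT product. Granted this, it suffices to bound Frobenius fixed points in $\SH^{\mathrm{even}}(T)$.

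On $\SH^*(T)$ there are two compatible gradings respecting the product. The first is the winding decomposition $\SH^*(T) = \bigoplus_{w \in \bZ} \SH^*(T)_w$ indexed by the image of a free-loop class under $\pi_* \colon H_1(T) \to H_1([-1,1]\times S^1) = \bZ$, with $\SH^*(T)_{w_1} \cdot \SH^*(T)_{w_2} \subseteq \SH^*(T)_{w_1+w_2}$. The second is the $\bZ$-grading by Conley--Zehnder index, available since $c_1(M) = 0$ and $H^1(M) = 0$. Writing a Frobenius fixed point in bigraded pieces $x = \sum_{(w,d)} x_{(w,d)}$, and using that in a characteristic $p$ commutative ring Frobenius is additive, we get $x^p = \sum x_{(w,d)}^p$ with $x_{(w,d)}^p$ of bidegree $(pw, pd)$. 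Matching bigraded components of $x^p = x$ gives $x_{(w',d')} = x_{(w'/p,\, d'/p)}^p$ when $p$ divides both $w'$ and $d'$, and zero otherwise; iterating $p$-adically forces $x_{(w',d')} = 0$ whenever $(w',d') \neq (0,0)$. Thus every Frobenius fixed point of $\SH^{\mathrm{even}}(T)$ lies in $\SH^0(T)_0$.

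By the previous lemma, $\SH^0(T)_0$ is finite-dimensional over $\bK$ and $\SH^*(T)_0 \neq 0$, so its unit $1 \in \SH^0(T)_0$ is nonzero. In any nonzero finite-dimensional commutative $\bF_p$-algebra the solution set of $x^p = x$ is an \'etale subalgebra isomorphic to some $\bF_p^r$, hence finite, and it always contains the two distinct elements $0$ and $1$. This yields $2 \leq N < \infty$ and completes the plan.
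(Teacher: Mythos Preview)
Your proposal is correct and follows the same route as the paper: reduce from $U$ to $T$ via the subcritical-handle isomorphism (which is indeed a ring map, being a Viterbo restriction), then use the bigrading by winding number and Conley--Zehnder index together with Frobenius additivity to force every solution of $x^p=x$ into the finite-dimensional piece $\SH^0(T)_0$. The paper outsources that localization step to \cite[Lemma~7.6]{mclean09} rather than writing it out, but your bigraded-Frobenius argument is exactly the expected content of that reference.
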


\begin{proof}
By the same argument as in \cite[Lemma 7.6]{mclean09}, any solution of $x^p = x$ in the even part of $\SH^*(T)$ is necessarily contained in the subspace $\SH^*(T)_0$, and moreover, in the degree zero part of that subspace. Since that part is finite-dimensional over $\bK$, there are only finitely many such solutions. On the other hand, we know that $\SH^*(T)_0$ is nonzero, so there are at least two solutions (zero and the multiplicative unit). Finally, because our handle attachment is subcritical, the Viterbo restriction map $\SH^*(U) \rightarrow \SH^*(T)$ is an isomorphism \cite{cieliebak02}, so the result carries over to $U$.
\end{proof}

\subsection{Milnor fibres\label{subsec:milnor}}
Again following \cite{mclean09}, we will apply the previous considerations to a class of examples arising from singularity theory. Let $p \in \bC[x_1,\dots,x_{n+1}]$ be a weighted homogeneous polynomial. Weighted homogeneity means that
\begin{equation} \label{eq:weights}
p(\zeta^{w_1} x_1,\dots,\zeta^{w_{n+1}} x_{n+1}) = \zeta^w p(x_1,\dots,x_{n+1})
\end{equation}
for some positive integers $(w_1,\dots,w_{n+1},w)$. Suppose that $p$ has an isolated critical point at the origin (and hence no other critical points). We make two additional assumptions:
\begin{itemize} \parskip1em
\item $w_1 + \cdots + w_{n+1} \neq w$;
\item the link of the singularity is an integral homology sphere.
\end{itemize}
Concrete examples are $p(x) = x_1^2 + \cdots + x_n^2 + x_{n+1}^3$ if $n$ is odd, and $p(x) = x_1^2 + \cdots + x_{n-1}^2 + x_n^3 + x_{n+1}^5$ if $n$ is even \cite{brieskorn66b} (compare with \cite{mclean09}, which requires the link to be a homotopy sphere, hence excludes the case $n = 2$). Consider the Milnor fibre of such a singularity, which by definition is
\begin{equation}
M = \{p(x) = \delta, \; \|x\| \leq 1\}
\end{equation}
where $\delta$ is some nonzero small complex number, with the Liouville structure given by the plurisubharmonic function $\|x\|^2/4$. Because it comes from a hypersurface in $\bC^{n+1}$, $M$ has vanishing first Chern class. As the Milnor fibre of an isolated hypersurface singularity, it is also homotopy equivalent to a wedge of $n$-spheres \cite{milnor68}, hence in particular $H^1(M) = 0$.  Moreover, $M$ always contains a Lagrangian sphere \cite[Lemma 4.14]{seidel99}. By an application of Gray's theorem, the boundary $\partial M$ is isomorphic as a contact manifold to the link of the singularity. We find it convenient to define this link as $p^{-1}(0) \cap h^{-1}(1)$, where $h(x) = w_1|x_1|^2/2 + \cdots + w_{m+1} |x_{m+1}|^2/2$, since then the Reeb flow on the link is precisely the circle action on the left side of \eqref{eq:weights} (the same contact one-form appears in \cite{ustilovsky99}). In particular, the periodic Reeb orbits appear in Morse-Bott nondegenerate components $C_j$, where for some $l>0$, each $C_{j+l}$ differs from $C_j$ by going once around the circle action. The Conley-Zehnder indices $\mu_j$ and $\mu_{j+l}$ then differ by $2(w - w_1 - \cdots - w_{n+1})$, see \cite[Lemma 4.15]{seidel99}. By assumption, this implies that there can only be finitely many connected component of any given index.
Note that while the Reeb flow for the given contact one-form on $\partial M$ may not be the same, one can adjust that by adding a finite conical piece to the boundary. We assume from now on that this has been done, without changing the notation.

The monodromy of the singularity is a symplectic automorphism $\phi: M \rightarrow M$ of the kind considered before. If we take that map and construct the associated Liouville domain $T$ via the symplectic mapping torus, its enlargement to a Liouville manifold is deformation equivalent to the affine variety
\begin{equation} \label{eq:complement}
\bC^{n+1} \setminus p^{-1}(0) \iso \{p(x_1,\dots,x_{n+1})x_{n+2} = 1 \} \subset \bC^{n+2}.
\end{equation}
We will mainly consider the topological implications of this. Because of weighted homogeneity, $\bC^{n+1} \setminus p^{-1}(0)$ is homotopy equivalent to $S^{2n+1} \setminus p^{-1}(0)$, hence by assumption
\begin{equation*}
H_*(T) \iso H^{2n+1-*}(S^{2n+1}, S^{2n+1} \cap p^{-1}(0)) \iso H^{2n+1-*}(S^{2n+1},S^{2n-1}) \iso H_*(S^1).
\end{equation*}
Moreover, the Milnor fibre is simply-connected, which implies that $\pi_1(T)$ is abelian. Once we attach a two-handle to kill $H_1(T)$, the resulting $U$ is therefore contractible. Because we have started with a Stein domain and attached a two-handle, there is a Morse function $h: U \rightarrow (-\infty,0]$ with $h^{-1}(0) = \partial U$ all of whose critical points have index $\leq n+1$. Standing this Morse function on its head and using the fact that $n \geq 2$, one sees that $\partial U$ can be made contractible by attaching cells of dimension $3$ and higher. Hence, $\partial U$ itself must be simply-connected, which by the h-cobordism theorem implies that $U$ is diffeomorphic to $D^{2n+2}$ (this copies a well-known argument about the topology of contractible affine algebraic varieties, see \cite[Theorem 1.3]{zaidenberg98} and references therein). After applying Lemma \ref{th:frobenius}, one arrives at the following conclusion, which is our analogue of \cite[Theorem 7.7]{mclean09}:

\begin{corollary} \label{th:mclean}
$U$ is a Liouville domain diffeomorphic to $D^{2n+2}$. If $\bK$ is finite of characteristic $p$, there is a finite number $N \geq 2$ of solutions of the equation $x^p = x$ in the even part of $\SH^*(U)$. \qed
\end{corollary}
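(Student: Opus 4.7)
The plan is to assemble the two assertions from the geometric and algebraic building blocks already set up in Sections \ref{subsec:mclean} and \ref{subsec:milnor}. The first assertion, that $U$ is diffeomorphic to $D^{2n+2}$, is purely topological. First I would record that $\bC^{n+1}\setminus p^{-1}(0)$ deformation retracts, via the weighted $\bC^{*}$-action, onto $S^{2n+1}\setminus p^{-1}(0)$, and then use the assumption that the link is an integral homology sphere to conclude $H_{*}(T)\iso H_{*}(S^{1})$. Because the Milnor fibre is simply connected for $n\geq 2$, the mapping torus has abelian $\pi_{1}$, hence $\pi_{1}(T)=\bZ$; the attached Weinstein $2$-handle then kills $\pi_{1}$ and also kills $H_{1}$, leaving $U$ contractible.

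Second, I would upgrade contractibility of $U$ to the diffeomorphism $U\iso D^{2n+2}$ via handle theory. Since $T$ is Stein and $U$ is obtained from $T$ by attaching a single subcritical $2$-handle, $U$ admits a handle decomposition with handles of index at most $n+1$. Turning this handle decomposition upside down realises $\partial U$ by attaching to a small sphere only cells of dimension $\geq n+1\geq 3$, so $\partial U$ is simply connected. Because $\dim \partial U = 2n+1\geq 5$, the h-cobordism theorem applied to $U$ with a small open ball removed then identifies $U$ smoothly with the standard ball.

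For the second assertion, I would simply apply Lemma \ref{th:frobenius} to $U$ after verifying its standing hypotheses for the Milnor fibre $M$. Vanishing of $c_{1}(M)$ and $H^{1}(M)$ is automatic since $M$ is a smooth hypersurface in $\bC^{n+1}$ of the homotopy type of a wedge of $n$-spheres. The Reeb flow on $\partial M$ equals the circle action from \eqref{eq:weights}, so its closed orbits are organised into Morse--Bott families $C_{j}$ whose Conley--Zehnder indices satisfy $\mu_{j+l}-\mu_{j}=2(w-w_{1}-\cdots-w_{n+1})\neq 0$; hence each index is attained only finitely often. Finally, $M$ carries an exact oriented \emph{Spin} Lagrangian sphere by \cite[Lemma 4.14]{seidel99}. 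With these inputs, Lemma \ref{th:frobenius} produces the finite number $N\geq 2$ of solutions to $x^{p}=x$ in the even part of $\SH^{*}(U;\bF_{p})$.

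The main obstacle I anticipate is the topological step identifying $U$ with $D^{2n+2}$: one must run the dual Morse argument carefully enough to guarantee that $\partial U$ receives no $1$- or $2$-cells, and then invoke h-cobordism in the smooth category rather than merely up to homeomorphism. Everything else is essentially a verification that the preceding discussion supplies the hypotheses of Lemma \ref{th:frobenius}.
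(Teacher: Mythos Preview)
Your proposal is correct and follows essentially the same route as the paper: compute $H_*(T)$ via the weighted $\bC^*$-retraction and the homology-sphere hypothesis, kill $\pi_1$ with the $2$-handle, use the subcritical handle structure plus the dual Morse argument and h-cobordism to identify $U$ with $D^{2n+2}$, and then invoke Lemma~\ref{th:frobenius} after checking its hypotheses for the Milnor fibre exactly as you outline. The only slip is in the dual Morse step: turning the handle decomposition upside down exhibits the contractible $U$ as built from $\partial U$ by attaching cells of dimension $\geq n+1\geq 3$ (not $\partial U$ from a sphere), and it is this that forces $\pi_1(\partial U)=0$; with that correction your argument goes through verbatim.
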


\subsection{Finite type constructions}
We can now prove the remaining results from Section \ref{subsec:results}. The proof of Corollary \ref{th:1.5} uses the product structure on symplectic cohomology, as in \cite{mclean09}, together with homologous recombination. In contrast, that of Corollary \ref{th:2.5} involves only the additive structure, but its use of homologous recombination is more subtle.

\begin{proof}[Proof of Corollary \ref{th:1.5}]
Starting with a given affine algebraic variety $X$, and its associated Liouville domain $E$, we first apply Theorem \ref{th:vanish-0} to construct an almost symplectomorphic Liouville domain $\tilde{E}$ whose symplectic cohomology vanishes. For any $k \geq 0$, take the boundary connect sum of $\tilde{E}$ and $k$ copies of the domain $U$ from Corollary \ref{th:mclean}, and enlarge that to a Liouville manifold $\tilde{X}_k$. This is still almost symplectomorphic to $X$, and by \cite{cieliebak02} it satisfies
\begin{equation}
\SH^*(\tilde{X}_k) \iso \bigoplus_{i=1}^k \SH^*(U).
\end{equation}
This isomorphism is a Viterbo restriction map, hence compatible with the commutative ring structures. Take the coefficient field to be $\bK = \bF_p$ for some $p$. Then, if the even part of $\SH^*(U)$ contains $N \geq 2$ solutions of $x^p = x$, the corresponding number for $\SH^*(\tilde{X}_k)$ is $N^k$. It follows that the $\tilde{X}_k$ are pairwise non-symplectomorphic.
\end{proof}

\begin{remark}
If one is willing to settle for a single nonstandard structure instead of an infinite sequence, the argument can be simplified considerably. Namely, if the symplectic cohomology of the given affine variety vanishes, take its boundary connect sum with $U$; otherwise, kill the symplectic cohomology using Theorem \ref{th:vanish-0}. Either way, one gets a manifold which is not symplectomorphic to the original one.
\end{remark}

\begin{proof}[Proof of Corollary \ref{th:2.5}]
Start with $T$ as in Section \ref{subsec:milnor}. We know that $\SH^*(T) \neq 0$ with coefficients in an arbitrary $\bK$. Moreover, the enlargement of $T$ to a Liouville manifold yields an affine variety \eqref{eq:complement}, so Picard-Lefschetz theory applies. Fix a prime number $q$. By using Theorem \ref{th:selective-vanishing}, we can find a $\tilde{T}_q$ which is almost symplectomorphic (and in particular diffeomorphic) to $T$, and such that $\SH^*(\tilde{T}_q)$ with coefficients in $\bK = \bF_p$ vanishes if and only if $p = q$. Because all the $\tilde{T}_q$ are obtained by homologous recombination applied a fixed Lefschetz fibration, their complexity is bounded, see \eqref{eq:5m-complexity}. Attach a two-handle to $\partial\tilde{T}_q$ to form another Liouville domain $\tilde{U}_q$ with the same symplectic cohomology, and which is diffeomorphic to the disc.  Now, given some affine variety, produce $\tilde{E}$ as in the proof of Corollary \ref{th:1.5}, take the boundary connect sum of that with $\tilde{U}_q$, and enlarge the result to a Liouville manifold $\tilde{X}_q$. Since $\SH^*(\tilde{X}_q) \iso \SH^*(\tilde{T}_q)$, these manifolds are pairwise non-symplectomorphic. Moreover, in view of \eqref{eq:complexity-one}, the complexity of $\tilde{X}_q$ is bounded above by a number independent of $q$.
\end{proof}

\subsection{Infinite type constructions}
The remaining constructions employ methods similar to Corollary \ref{th:2.5}, together with the idea from \cite{mclean06}.

\begin{proof}[Proof of Theorem \ref{th:3}]
Slightly generalizing the argument from Corollary \ref{th:2.5}, we can find, for each integer $q > 1$, a Liouville domain $\tilde{U}_q$ which is diffeomorphic to a disc of our given dimension, and whose symplectic cohomology with coefficients in $\bF_p$ vanishes if and only if $p$ divides $q$. Write $\bP = \{p_1,p_2,\dots\}$ and $q_k = p_1\cdots p_k$. For any $k \geq 1$, let $W_k$ be the boundary connect sum of one copy of $\tilde{U}_{q_1}, \tilde{U}_{q_2}, \dots, \tilde{U}_{q_k}$ each. By attaching a finite conical piece to the boundary if necessary, one can arrange that $W_k$ is contained in the interior of $W_{k+1}$ for all $k$. Let $W$ be the union of all the $W_k$, which is a Liouville manifold. By definition \eqref{eq:inverse-limit} and \cite{cieliebak02},
\begin{equation} \label{eq:infinite-product}
\SH^*(W) \iso \prod_k \SH^*(W_k).
\end{equation}
If we take coefficients in $\bF_p$ where $p \in \bP$, then almost all of the terms in the product \eqref{eq:infinite-product} are zero. Each term is the symplectic cohomology of a Liouville domain, hence of at most countable dimension, and the same will hold for $\SH^*(W)$. On the other hand, if we take coefficients in $\bF_p$ where $p \notin \bP$, then all terms in \eqref{eq:infinite-product} are nonzero. But an infinite product of nonzero vector spaces is always of uncountable dimension.
\end{proof}

Corollary \ref{th:3.5} is proved in exactly the same way, except that when forming $W_k$, one additionally takes the boundary connect sum with the Liouville domain obtained by truncating the given finite type Liouville manifold $M$.


\end{document}